\numberwithin{equation}{section}
\newcommand{\beg}{\begin{equation}}
\newcommand{\eeg}{\end{equation}}
\newcommand{\ben}{\begin{eqnarray*}}
	\newcommand{\een}{\end{eqnarray*}}
\newtheorem{thm}{Theorem}[section]
\newtheorem{lem}[thm]{Lemma}
\newtheorem{prop}[thm]{Proposition}
\numberwithin{equation}{section} 
\theoremstyle{definition}
\newtheorem{defn}[thm]{Definition}
\newtheorem{rem}[thm]{Remark}
\newcommand{\HS}{\mathcal H}
\newcommand{\C}{\mathbb{C}}
\newcommand{\D}{\mathbb{D}}
\newcommand{\T}{\mathbb{T}}
\newcommand{\ft}{\mathcal F_O}
\newcommand{\gn}{\mathbb{G}_n}
\newcommand{\ov}{\overline}
\newcommand{\la}{\langle}
\newcommand{\ra}{\rangle}
\begin{document}
\title[Distinguished varieties in the symmetried polydisc]
{Distinguished varieties in a family of domains associated with spectral interpolation and operator theory}

\author[Sourav Pal]{Sourav Pal}
\address[Sourav Pal]{Mathematics Department, Indian Institute of Technology Bombay, Powai, Mumbai - 400076, India.}
\email{sourav@math.iitb.ac.in, souravmaths@gmail.com}

\keywords{Distinguished Varieties, Symmetrized Polydisc, Taylor Joint Spectrum, Functional Model, von Neumann Inequality, Spectral set, Complete spectral set}

\subjclass[2010]{ 14H50, 14M10, 47A20, 47A25, 32A60, 32M15}

\thanks{The author is supported by the Seed Grant of IIT Bombay, the CPDA of the Govt. of India and the MATRICS Award of SERB, (Award No. MTR/2019/001010) of DST, India.}

\begin{abstract}

We characterize all distinguished varieties in the symmetrized polydisc $\mathbb G_n \; (n\geq 2)$ and thus generalize the work [\textit{J. Funct. Anal.}, 266 (2014), 5779 -- 5800] on $\mathbb G_2$ by the author and Shalit. We show that a distinguished variety $\Lambda$ in $\mathbb G_n$ is a part of an algebraic curve, which is a set-theoretic complete intersection, and that $\Lambda$ can be represented by the Taylor joint spectrum of $n-1$ commuting scalar matrices satisfying certain conditions. An $n$-tuple of commuting Hilbert space operators $(S_1, \dots ,S_{n-1},P)$ for which $\Gamma_n=\overline{\mathbb G_n}$ is a spectral set is called a $\Gamma_n$-contraction. To every $\Gamma_n$-contraction $(S_1, \dots ,S_{n-1},P)$ there is a unique operator tuple $(F_1, \dots , F_{n-1})$, called the $\mathcal F_O$-tuple of $(S_1, \dots ,S_{n-1},P)$, satisfying
\[
S_i-S_{n-i}^*P=D_PF_iD_P \,,\quad i=1, \dots ,n-1.
\]
We produce concrete functional model for the pure isometric-operator tuples associated with $\Gamma_n$ and by an application of that model we establish that the $\Gamma_n$-contractions $(S_1, \dots ,S_{n-1},P)$ and $(S_1^*, \dots , S_{n-1}^*,P^*)$ admit normal $\partial \overline{ \Lambda}_{\Sigma}-$dilations for a unique distinguished variety $\Lambda_{\Sigma}$ in $\gn$, when $\Lambda_{\Sigma}$ is determined by the $\mathcal F_O$-tuple of $(S_1, \dots ,S_{n-1}, P)$. Further, we show that the dilation of $(S_1^*, \dots ,S_{n-1}^*,P^*)$ is minimal and acts on the minimal unitary dilation space of $P^*$. Also, we show interplay between the distinguished varieties in $\mathbb G_2$ and $\mathbb G_{3}$.

\end{abstract}

\maketitle


\section{Introduction} \label{sec:01}

\noindent We denote by $\mathbb C,\, \mathbb D $ and $\mathbb T$ the space of complex numbers, the unit disc and the unit circle in the complex plane with center at the origin respectively. All operators considered are bounded linear operators defined on complex Hilbert spaces. A contraction is an operator with norm not greater than
one. Throughout the paper we shall consider the symmetrized $n$-disc for $n\geq 2$ only.\\

The \textit{symmetrized} $n$-\textit{disc} or simply the \textit{symmetrized polydisc} $\mathbb G_n$, which is defined by
\[
\mathbb G_n =\left\{ \left(\sum_{1\leq i\leq n} z_i,\sum_{1\leq
i<j\leq n}z_iz_j,\dots, \prod_{i=1}^n z_i \right): \,|z_i|< 1,
i=1,\dots,n \right \},
\]
is family of polynomially convex domains that naturally arise in the Spectral Pick-Nevanlinna interpolation problem. If $\mathcal M_n(\mathbb C)$ is the space of $n\times n$ complex matrices and if $\mathcal B_1$ is its spectral unit ball, then $A\in \mathcal B_1$ (that is, the spectral radius $r(A)<1$) if and only if $\pi_n(\sigma(A)) \in \mathbb G_n$, where $\sigma(A)$ is the spectrum of $A$ and $\pi_n:\mathbb C^n \rightarrow \mathbb C^n$ is the symmetrization map whose components are the $n$ symmetric polynomials, i.e.
\[
\pi_n(z)=\left(\sum_{1\leq i\leq n} z_i,\sum_{1\leq
i<j\leq n}z_iz_j,\dots, \prod_{i=1}^n z_i \right).
\]
Evidently $\mathbb G_1$ is the unit disc $\D$ and for $n\geq 2$, $\mathbb G_n= \pi_n(\D^n)$. The most appealing fact about $\mathbb G_n$ is that apart from the derogatory matrices (for which it is not yet known), the $n\times n$ spectral Nevanlinna-Pick problem is equivalent to a similar interpolation problem of $\mathbb G_n$ (see \cite{ay-ieot}, Theorem 2.1). Note that a bounded domain like $\mathbb G_n$, which has complex-dimension $n$, is much easier to deal with than a norm-unbounded $n^2$-dimensional object like $\mathcal B_1$. The symmetrized polydisc has attracted considerable attention in past two decades because of its rich function theory, beautiful complex geometry and appealing operator theory (e.g. see \cite{costara1, edi-zwo, ay-jot, tirtha-sourav, BSR, Bisai-Pal1, Bisai-Pal2} and the references there in).\\

The main aim of this paper is to characterize all distinguished varieties (which we define below) in $\mathbb G_n$ for $n\geq 2$ and to study rational dilation for operators associated with them. The distinguished varieties in the bidisc $\D^2$ was introduced in \cite{AM05} by Agler and M\raise.45ex\hbox{c}Carthy, where they showed that for any pair of commuting contractive matrices $(T_1,T_2)$ of same order having no unimodular eigenvalues, there is a distinguished variety
\begin{equation}\label{eqn:Intro-02}
V=\{ (z,w)\in \mathbb D^2\,:\, \det (\Psi(z)-wI)=0 \}
\end{equation}
in $\D^2$ such that von-Neumann's inequality holds on $V\cap \D^2$, i.e.
\begin{equation}\label{eqn:Intro-01}
\| f(T_1,T_2) \|\leq \sup_{(z_1,z_2)\in V\cap \D^2} |f(z_1,z_2)|,
\end{equation}
holds for any polynomial $f\in \mathbb C[z_1,z_2]$. Note that here $\Psi$ is matrix-valued rational function on the unit disc $\mathbb D$ which is unitary on the unit circle $\mathbb T$. The seminal work of Agler and M\raise.45ex\hbox{c}Carthy was continued further in \cite{A-K-M}. In \cite{pal-shalit}, the author of this article and Shalit found an analogue of Agler and M\raise.45ex\hbox{c}Carthy's results for the symmetrized bidisc
\[
\mathbb G_2 = \{ (z_1+z_2,z_1z_2):\,, |z_i|<1, \, i=1,2 \}.
\]
Also, there are interesting recent developments on this topic, e.g. \cite{Bh-Ku-Sau, Da-Ku-Sa}.\\

\noindent Recall that for any positive integer $n$, $\mathbb A_{\mathbb C}^n$ stands for the \textit{affine} $n$-\textit{space} $\mathbb C^n$ over $\mathbb C$. If $A \subseteq \mathbb C[z_1,\dots,z_n]$, then the \textit{zero set} of $A$ is defined to be the set of common zeros of all elements of $A$, i.e.
\[
Z(A)=\{Q\in\mathbb A_{\mathbb C}^n\,:\, p(Q)=0 \text{ for all } p \in A   \}.
\]

\begin{defn}
A subset $S$ of $\mathbb A_{\mathbb C}^n$ is called an \textit{affine algebraic set} or simply an \textit{algebraic set} if there is a subset $A$ of $\mathbb C[z_1,\dots,z_n]$ such that $S=Z(A)$. An irreducible algebraic set is called an \textit{affine algebraic variety} or simply an \textit{algebraic variety}.
\end{defn}

\begin{defn}
A \textit{distinguished set} $S$ in a domain $\Omega\subset \mathbb C^n$ is the intersection of $\Omega$ with an algebraic set $V\subset \mathbb A^n_{\mathbb C}$ such that the complex-dimension of $V$ is greater than zero and that $V$ exits the domain $\Omega$ through the distinguished boundary $b\overline{\Omega}$ of $\Omega$, that is,
\[
\dim V >0, \quad S = \Omega \cap V \quad \text{ and } \quad \overline{S}\cap \partial \overline{\Omega}= \overline{S} \cap b\overline{\Omega},
\]
$\partial \overline{\Omega}$ being the topological boundary of $\Omega$. A \textit{distinguished variety} in $\Omega$ is an irreducible distinguished set. Also, we denote by $\overline{S}$ and $\partial \ov{S}$, the sets $V\cap \overline{\Omega}$ and $V\cap b\ov{\Omega}$ respectively.
\end{defn}
Thus, a distinguished set is nothing but a union of distinguished varieties and needless to mention that the distinguished sets in $\mathbb D^2$ and $\mathbb G_2$ were studied in \cite{AM05} and \cite{pal-shalit} respectively, not only the distinguished varieties. We now focus on the operator tuples that are naturally associated with the symmetrized polydisc.
\begin{defn}
A commuting $n$-tuple of Hilbert space operators $(S_1,\dots$, $S_{n-1},P)$ for which the closed symmetrized polydisc $\Gamma_n \; (=\ov{\gn})$ is a spectral set is called a $\Gamma_n$-\textit{contraction}, that is, $(S_1, \dots ,S_{n-1},P)$ is a $\Gamma_n$-contraction if the Taylor joint spectrum $\sigma_T(S_1, \dots, S_{n-1},P) \subseteq \Gamma_n$ and von-Neumann's inequality
\[
\|g(S_1, \dots ,S_{n-1},P)\| \leq \sup_{\textbf{z} \in \Gamma_n}\; |g(\textbf{z})|
\]
holds for every rational function $g=p/q$, where $p,q \in \C[z_1, \dots ,z_n]$ with $q$ does not have any zero in $\Gamma_n$.
\end{defn}
It is evident from the definition that if $(S_1,\dots, S_{n-1},P)$ is a $\Gamma_n$-contraction, then so is its adjoint $(S_1^*,\dots, S_{n-1}^*,P)^*$ and that $P$ is a contraction. It was shown in \cite{sourav6} that to every $\Gamma_n$-contraction $(S_1, \dots ,S_{n-1},P)$ acting on $\HS$, there is a unique operator tuple $(F_1, \dots ,F_{n-1})$ acting on $\ov{Ran}\, D_P=\ov{Ran}\, (I-P^*P)^{\frac{1}{2}}$, namely the \textit{fundamental operator tuple} or simply the $\ft$-\textit{tuple} of $(S_1, \dots ,S_{n-1},P)$, satisfying
\[
S_i-S_{n-i}^*P=D_PF_iD_P \quad \text{for } \; i=1, \dots ,n-1.
\]
In Theorems \ref{thm:DVchar} \& \ref{thm:DVchar-1} of this paper, we characterize all distinguished varieties in the symmetrized polydisc and represent them as the Taylor joint spectrum of certain commuting complex square matrices of same order. The Taylor joint spectrum of a finitely many commuting square matrices is just the set of their joint-eigenvalues. We prove that every distinguished variety in $\mathbb G_n$ is a part of an affine algebraic curve, which itself is also a set-theoretic complete intersection. In Theorem \ref{model1}, we obtain a concrete functional model for pure $\Gamma_n$-isometries. We apply this functional model in Theorem \ref{thm:VN} to show that a $\Gamma_n$-contraction $(S_1, \dots ,S_{n-1}, P)$ and its adjoint $(S_1^*, \dots , S_{n-1}^*,P^*)$ admit  normal $\partial \ov{\Lambda}_{\Sigma}-$dilation for a unique distinguished variety $\Lambda_{\Sigma}$ in $\gn$, if the $\ft$-tuple $(F_1, \dots ,F_{n-1})$ defines $\Lambda_{\Sigma}$. As a consequence of a celebrated theorem of Arveson (see Theorem \ref{Arveson}) it follows that the $\Gamma_n$-contractions $(S_1, \dots ,S_{n-1},P)$ and $(S_1^*, \dots ,S_{n-1}^*,P^*)$ satisfy von Neumann's inequality on an algebraic curve $\Lambda_{\Sigma}$ lying in $\gn$ for any holomorphic matricial polynomial. In the same section, we prove that every distinguished variety in $\mathbb G_n$ is polynomially convex. Also, we show by explicit construction that every distinguished variety in $\mathbb G_3$ gives rise to a distinguished variety in $\mathbb G_2$. In Section \ref{sec:02}, we discuss in brief the notion of Taylor joint spectrum and recollect from the literature a few results on the symmetrized polydisc which will be used in sequel.\\

\noindent \textbf{Acknowledgement.} The author is grateful to Indranil Biswas, Sudhir Ghorpade, Souvik Goswami and Jugal K. Verma for stimulating conversations on general algebraic geometry. Also, the author thanks Prof. J. E McCarthy profusely for being kind enough to read the paper and make fruitful comments.

 
\vspace{0.1cm}

\section{Preliminaries and a brief literature on the symmetrized polydisc}\label{sec:02}

\vspace{0.4cm}

\noindent The notion of Taylor joint spectrum of commuting tuples of operators and distinguished boundary of a domain play pivotal role in this paper. For this reason we provide a brief discussion on these topics.
\subsection{Taylor joint spectrum} 
Let $\Lambda$ be the exterior algebra on $n$ generators
$e_1,...e_n$, with identity $e_0\equiv 1$. $\Lambda$ is the
algebra of forms in $e_1,...e_n$ with complex coefficients,
subject to the collapsing property $e_ie_j+e_je_i=0$ ($1\leq i,j
\leq n$). Let $E_i: \Lambda \rightarrow \Lambda$ denote the
creation operator, given by $E_i \xi = e_i \xi $ ($\xi \in
\Lambda, 1 \leq i \leq n$).
 If we declare $ \{ e_{i_1}... e_{i_k} : 1 \leq i_1 < ... < i_k \leq n \}$ to be an
 orthonormal basis, the exterior algebra $\Lambda$ becomes a Hilbert space,
 admitting an orthogonal decomposition $\Lambda = \oplus_{k=1} ^n \Lambda^k$
 where $\dim \Lambda ^k = {n \choose k}$. Thus, each $\xi \in \Lambda$ admits
 a unique orthogonal decomposition
 $ \xi = e_i \xi' + \xi''$, where $\xi'$ and $\xi ''$ have no $e_i$
contribution. It then follows that that  $ E_i ^{*} \xi = \xi' $,
and we have that each $E_i$ is a partial isometry, satisfying
$E_i^*E_j+E_jE_i^*=\delta_{i,j}$. Let $\mathcal X$ be a normed
space, let $\underline{T}=(T_1,...,T_n)$ be a commuting $n$-tuple
of bounded operators on $\mathcal X$ and set $\Lambda(\mathcal
X)=\mathcal X\otimes_{\mathbb{C}} \Lambda$. We define
$D_{\underline T}: \Lambda (\mathcal X) \rightarrow \Lambda
(\mathcal X)$ by

\[
D_{\underline T} = \sum_{i=1}^n T_i \otimes E_i .
\]

Then it is easy to see $D_{\underline T}^2=0$, so $Ran
D_{\underline T} \subset Ker D_{\underline T}$. The commuting
$n$-tuple is said to be \textit{non-singular} on $\mathcal X$ if
$Ran D_{\underline T}=Ker D_{\underline T}$.
\begin{defn}
The Taylor joint spectrum of ${\underline T}$ on $\mathcal X$ is
the set
\[
\sigma_T({\underline T},\mathcal X) = \{
\lambda=(\lambda_1,...,\lambda_n)\in \mathbb{C}^n : {\underline
T}-\lambda \text{ is singular} \}.
\]
\end{defn}
\begin{rem}
The decomposition $\Lambda=\oplus_{k=1}^n \Lambda^k$ gives rise to
a cochain complex $K({\underline T},\mathcal X)$, known as the
Koszul complex associated to ${\underline T}$ on $\mathcal X$, as
follows:
\[
K({\underline T},\mathcal X):0 \rightarrow \Lambda^0(\mathcal
X)\xrightarrow{D_{\underline T}^0}... \xrightarrow{D_{\underline
T}^{n-1}} \Lambda^n(\mathcal X) \rightarrow 0 ,
\]
where $D_{\underline T}^{k}$ denotes the restriction of
$D_{\underline T}$ to the subspace $\Lambda^k(\mathcal X)$. Thus,
\[
\sigma_T({\underline T},\mathcal X) = \{ \lambda\in \mathbb{C}^n :
K({\underline T}-\lambda ,\mathcal X)\text{ is not exact} \}.
\]
\end{rem}
The following theorem is well-known.

\begin{thm}\label{thm:Joint Spectrum}
For any finitely many commuting complex square matrices of same size $B_1,\dots ,B_n$, the Taylor joint spectrum $\sigma_T(B_1,\dots ,B_n)$ is the set of joint eigenvalues of $B_1,\dots, B_n$.
\end{thm}

\noindent For a further reading on Taylor joint spectrum an
interested reader is referred to Curto's nice survey article \cite{curto} or Taylor's original works \cite{Taylor, Taylor1}.

\subsection{The distinguished boundary}

Let $X\subset \mathbb C^n$ be a compact set and let $\mathcal A(X)$ be an algebra of continuous complex-valued
functions on $X$ which separates the points of $X$. A
\textit{boundary} for $\mathcal A(X)$ is a subset $\delta
X$ of $X$ such that every function in $\mathcal A(X)$ attains its
maximum modulus on $\delta X$. It follows from the theory of
uniform algebras that the intersection of all closed boundaries of
$X$ is also a boundary for $\mathcal A(X)$ (see Theorem 9.1 in
\cite{wermer}). This smallest boundary is called the
\textit{Shilov boundary} for $\mathcal A(X)$.
When $\mathcal A(X)$ is the uniform closure of the algebra of rational functions which are continuous on $X$, the Shilov boundary for $\mathcal A(X)$ is called the \textit{distinguished boundary} of $X$ and is denoted by $bX$.

\subsection{A brief literature on the symmetrized polydisc} \label{sec:03}

\vspace{0.3cm}

\noindent Recall from the literature (e.g. \cite{BSR}) that the closed symmetrized polydisc $\Gamma_n$ and its distinguished boundary $b\Gamma_n$ are the following subsets of $\mathbb C^n$:
\begin{align*}
\Gamma_n & = \left\{ \left(\sum_{1\leq i\leq n} z_i,\sum_{1\leq
i<j\leq n}z_iz_j,\dots, \prod_{i=1}^n z_i \right): \,|z_i|\leq 1,
i=1,\dots,n \right \}, \\
b\Gamma_n & = \left\{ \left(\sum_{1\leq i\leq n} z_i,\sum_{1\leq
i<j\leq n}z_iz_j,\dots, \prod_{i=1}^n z_i \right): \,|z_i|= 1,
i=1,\dots,n \right \}.
\end{align*}
In this section, we recall from the lierature (\cite{costara1, BSR, sourav3}) a few results on complex geometry and operator theory on $\mathbb G_n$. We begin with a theorem that characterizes a point in $\Gamma_n$ and $\mathbb G_n$.

\begin{thm}[\cite{costara1}, Theorems 3.6 \& 3.7]\label{char-G}
   For $x=(s_1,\dots, s_{n-1},p)\in \mathbb C^n$ the following are
   equivalent:
   \begin{enumerate}
        \item[$(1)$] $ x\in \Gamma_n  ($ respectively $\in\mathbb G_n)$ ;
        \item[$(2)$] $ |p|\leq 1$,  $($ respectively $<1)$ and there
        exists
        $ (c_1,\dots, c_{n-1}) \in \Gamma_{n-1}  \;($ respectively $\in\mathbb G_{n-1})$ such that
        \[
        s_i = c_i + \bar{c}_{n-i}p \,,\, \text{ for } i=1,\dots, n-1.
        \]
   \end{enumerate}
\end{thm}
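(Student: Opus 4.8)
The plan is to encode the characterization as a statement about the location of the roots of a monic polynomial and then to recognize it as a Schur--Cohn type reduction. To a point $x=(s_1,\dots,s_{n-1},p)$ I associate the monic polynomial $f(t)=\sum_{i=0}^{n}(-1)^i s_i t^{n-i}$ (with $s_0=1$, $s_n=p$), whose roots are precisely a tuple $(z_1,\dots,z_n)$ with $\pi_n(z)=x$; thus $x\in\Gamma_n$ (respectively $\in\mathbb G_n$) if and only if all roots of $f$ lie in $\overline{\mathbb D}$ (respectively in $\mathbb D$). Likewise, to $(c_1,\dots,c_{n-1})$ I associate the monic degree $n-1$ polynomial $q(t)=\sum_{i=0}^{n-1}(-1)^i c_i t^{n-1-i}$, with reversal $q^*(t)=t^{n-1}\overline{q(1/\bar t)}=\sum_{i=0}^{n-1}(-1)^i\bar c_i t^i$. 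A direct coefficient computation, setting $c_0=1$ and $c_n=0$, then shows that the relations $s_i=c_i+\bar c_{n-i}p$ (now valid for all $i$) are equivalent to the single polynomial identity
\[
f(t)=t\,q(t)+(-1)^n p\,q^*(t).
\]
Hence the theorem reduces to the assertion that $f$ has all roots in $\overline{\mathbb D}$ (respectively $\mathbb D$) and $|p|\le 1$ (respectively $<1$) precisely when $f$ admits such a decomposition with $q$ monic of degree $n-1$ having all roots in $\overline{\mathbb D}$ (respectively $\mathbb D$).

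For the implication $(2)\Rightarrow(1)$ I would argue directly. Writing $q(t)=\prod_{j=1}^{n-1}(t-w_j)$ with $|w_j|\le 1$ (respectively $<1$), one computes $q^*(t)=\prod_{j=1}^{n-1}(1-\bar w_j t)$, and the elementary identity $|t-w_j|^2-|1-\bar w_j t|^2=(|t|^2-1)(1-|w_j|^2)$ gives $|t-w_j|\ge|1-\bar w_j t|$ whenever $|t|\ge1$. Combining this with $|p|\le 1$ (respectively $<1$) yields $|t\,q(t)|>|(-1)^n p\,q^*(t)|$ for $|t|>1$ (respectively for $|t|\ge 1$), so $f(t)\ne 0$ there and all roots of $f$ lie in $\overline{\mathbb D}$ (respectively $\mathbb D$); that is, $x\in\Gamma_n$ (respectively $\mathbb G_n$).

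The substance of the argument is the converse $(1)\Rightarrow(2)$, namely inverting the Schur step. Setting $\eta=f(0)=(-1)^n p$ and $f^*(t)=t^n\overline{f(1/\bar t)}$, the coefficient equations force $q$ to be the reduced (Schur transform) polynomial
\[
q(t)=\frac{f(t)-\eta f^*(t)}{(1-|\eta|^2)\,t},
\]
which is monic of degree $n-1$ whenever $|\eta|=|p|<1$. It then remains to show that $q$ has all its roots in the correct disc. In the open case $x\in\mathbb G_n$, the polynomial $f$ has no root on $\mathbb T$ and $|f^*|=|f|$ on $\mathbb T$, so $|\eta f^*|<|f|$ on $\mathbb T$ and Rouch\'e's theorem shows that $f-\eta f^*=(1-|\eta|^2)t\,q$ has exactly $n$ roots in $\mathbb D$; discarding the forced root at the origin leaves all $n-1$ roots of $q$ inside $\mathbb D$, whence $(c_1,\dots,c_{n-1})\in\mathbb G_{n-1}$ and the required relations hold.

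The closed case $x\in\Gamma_n$ I would then deduce by a continuity argument, approximating $x$ by points of $\mathbb G_n$, extracting the associated points $c\in\mathbb G_{n-1}$, and using that $\Gamma_{n-1}$ is closed to pass to a limit. I expect the main obstacle to be exactly this boundary analysis: when $|p|=1$ or when $f$ has roots on $\mathbb T$, the Schur transform degenerates, since the denominator $1-|\eta|^2$ may vanish and the degree of $q$ may drop, so the clean Rouch\'e count is unavailable. The limiting argument must therefore be carried out carefully to guarantee that the limiting polynomial is still monic of degree exactly $n-1$ with all roots in $\overline{\mathbb D}$, so that it genuinely represents a point of $\Gamma_{n-1}$.
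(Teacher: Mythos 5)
The paper itself contains no proof of this statement: it is quoted directly from Costara (Theorems 3.6 and 3.7 of the cited paper), so there is no internal argument to compare yours against, and your proof must be judged on its own merits. It is correct, and it is in essence the Schur--Cohn reduction that underlies the original result. The coefficient identity (with $c_0=1$, $c_n=0$) $f(t)=t\,q(t)+(-1)^n p\,q^*(t)\ \Leftrightarrow\ s_i=c_i+\bar c_{n-i}p$ checks out; the inequality $|t-w_j|\ge |1-\bar w_j t|$ for $|t|\ge 1$ gives $(2)\Rightarrow(1)$ exactly as you say; and in the open case the Schur transform plus Rouch\'e correctly yields $(1)\Rightarrow(2)$. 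Two remarks. First, a step you elide: having defined $q=(f-\eta f^*)/\bigl((1-|\eta|^2)t\bigr)$, you argue only that any admissible $q$ is \emph{forced} to be this one (uniqueness), but you still must verify that this $q$ actually satisfies $f=tq+\eta q^*$ (existence), since that identity is what encodes the relations $s_i=c_i+\bar c_{n-i}p$. This is a one-line computation using $f^{**}=f$: one gets $q^*=(f^*-\bar\eta f)/(1-|\eta|^2)$, and then $tq+\eta q^*=f$ follows; routine, but it should be stated. Second, your anxiety about the closed case is unfounded, and the obstacle you anticipate never materializes. The limiting argument is clean precisely because one tracks the tuple $(c_1,\dots,c_{n-1})$ as a point of $\Gamma_{n-1}$ rather than the polynomial $q$ or its factorization: approximate $x=\pi_n(z)$, $z\in\overline{\mathbb D}^n$, by $x_r=\pi_n(rz)\in\mathbb G_n$ with $r\uparrow 1$; the open case produces $c^{(r)}\in\mathbb G_{n-1}$, which lies in the set $\Gamma_{n-1}=\pi_{n-1}(\overline{\mathbb D}^{n-1})$, compact as the continuous image of a compact set; pass to a convergent subsequence, and the relations $s_i=c_i+\bar c_{n-i}p$ and the bound $|p|\le 1$ survive the limit by continuity. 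No Rouch\'e count, no Schur transform, and no degree bookkeeping is needed at the boundary: a point of $\Gamma_{n-1}$ \emph{is by definition} the coefficient tuple of a monic degree-$(n-1)$ polynomial with all roots in $\overline{\mathbb D}$, so the ``degeneration'' you fear is bypassed entirely by compactness.
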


\noindent The next theorem provides a set of
characterizations for the points in the distinguished boundary $b\Gamma_n$.

\begin{thm}[\cite{BSR}, Theorem 2.4]\label{thm:DB}
For $(s_1,\dots,s_{n-1},p)\in \mathbb C^n$ the following are
equivalent:
\begin{enumerate}
\item $(s_1,\dots, s_{n-1},p)\in b\Gamma_n$\,; \item $(s_1,\dots,
s_{n-1},p)\in\Gamma_n$ and $|p|=1$\,; \item $|p|=1$,
$s_i=\bar{s}_{n-i}p$ for $i=1,\dots,n-1$ and
\[
\left( \frac{n-1}{n}s_1,\frac{n-2}{n}s_2,\dots,
\frac{n-({n-1})}{n}s_{n-1} \right)\in \Gamma_{n-1}\,;
\]
\item $|p|=1$ and there exist $(c_1,\dots,c_{n-1})\in\Gamma_{n-1}$
such that
\[
s_i=c_i+\bar{c}_{n-i}p \;\; \text{ for } \;\; i=1,\dots,n-1.
\]
\end{enumerate}
\end{thm}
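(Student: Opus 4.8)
The plan is to prove the cycle $(1)\Leftrightarrow(2)$ together with $(2)\Rightarrow(3)\Rightarrow(4)\Rightarrow(2)$, which at once forces all four conditions to be equivalent. Throughout I would write $(s_1,\dots,s_{n-1},p)=\pi_n(z_1,\dots,z_n)$ and introduce the monic polynomial $f(z)=\prod_{i=1}^n(z-z_i)=z^n-s_1z^{n-1}+s_2z^{n-2}-\cdots+(-1)^np$, so that $z_1,\dots,z_n$ are exactly its roots and $|p|=\prod_{i=1}^n|z_i|$. This polynomial is the bookkeeping device that converts every claim below into a statement about the location of roots of $f$ or of $f'$.

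For $(1)\Leftrightarrow(2)$ I would use the explicit description of $b\Gamma_n$ recalled at the start of this section. If the point lies in $b\Gamma_n$ it equals $\pi_n(z)$ with every $|z_i|=1$, whence $|p|=1$ and the point lies in $\Gamma_n$, giving $(1)\Rightarrow(2)$. Conversely, if the point is in $\Gamma_n$ with $|p|=1$, choose $z$ with all $|z_i|\le1$ and $\pi_n(z)$ equal to the point; then $1=|p|=\prod|z_i|\le1$, and equality forces $|z_i|=1$ for every $i$, so the point lies in $b\Gamma_n$. This elementary multiplicativity argument is the entire content of $(1)\Leftrightarrow(2)$.

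For $(2)\Rightarrow(3)$, note first that the previous step already forces all $|z_i|=1$. On the circle $\bar z_i=z_i^{-1}$, so a short symmetric-function computation gives $\bar s_{n-i}=\overline{\sum_{|S|=n-i}\prod_{j\in S}z_j}=\sum_{|S|=n-i}\prod_{j\in S}z_j^{-1}=p^{-1}\sum_{|S^c|=i}\prod_{j\in S^c}z_j=s_i/p$, i.e. the self-inversive relations $s_i=\bar s_{n-i}p$. For the remaining assertion of $(3)$ the key tool is the Gauss--Lucas theorem: $\tfrac1n f'(z)=\prod_{j=1}^{n-1}(z-w_j)$ is monic with coefficient of $z^{\,n-1-k}$ equal to $(-1)^k\tfrac{n-k}{n}s_k$, so that $\pi_{n-1}(w)=\bigl(\tfrac{n-1}{n}s_1,\dots,\tfrac1n s_{n-1}\bigr)$ is precisely the scaled tuple; since every $z_i$ lies in the convex set $\overline{\mathbb D}$, Gauss--Lucas places the critical points $w_1,\dots,w_{n-1}$ in $\overline{\mathbb D}$, hence $\pi_{n-1}(w)\in\Gamma_{n-1}$, which is exactly what $(3)$ asserts.

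The implication $(3)\Rightarrow(4)$ I would settle by the explicit choice $c_i=\tfrac{n-i}{n}s_i$, which lies in $\Gamma_{n-1}$ by $(3)$; using $s_i=\bar s_{n-i}p$ one checks $c_i+\bar c_{n-i}p=\tfrac{n-i}{n}s_i+\tfrac in\bar s_{n-i}p=\tfrac{n-i}{n}s_i+\tfrac in s_i=s_i$. Finally $(4)\Rightarrow(2)$ is immediate from Theorem \ref{char-G}: condition $(4)$ together with $|p|=1$ (hence $|p|\le1$) is exactly the second characterizing condition of that theorem for membership in $\Gamma_n$, and combined with $|p|=1$ this yields $(2)$. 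I expect the main obstacle to be the $(2)\Rightarrow(3)$ step, specifically recognizing the scaled tuple as the symmetrization $\pi_{n-1}$ of the critical points of $f$ and invoking Gauss--Lucas to control their location; the remaining implications are either elementary or reduce directly to Theorem \ref{char-G}.
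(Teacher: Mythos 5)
The paper itself contains no proof of this theorem: it is quoted verbatim from \cite{BSR} (Theorem 2.4 there), so there is no in-paper argument to compare yours against; what follows is an assessment of your proof on its own merits. Your argument is correct. The cycle $(1)\Leftrightarrow(2)$ together with $(2)\Rightarrow(3)\Rightarrow(4)\Rightarrow(2)$ is logically sound; the multiplicativity argument $1=|p|=\prod|z_i|\le 1$ forcing all $|z_i|=1$, the symmetric-function computation giving $s_i=\bar s_{n-i}p$ on the torus, the identification of $\left(\frac{n-1}{n}s_1,\dots,\frac{1}{n}s_{n-1}\right)$ as $\pi_{n-1}$ applied to the critical points of $f$ (the coefficient bookkeeping $e_k(w)=\frac{n-k}{n}s_k$ checks out), the Gauss--Lucas step placing those critical points in $\overline{\mathbb D}$, the explicit choice $c_i=\frac{n-i}{n}s_i$ for $(3)\Rightarrow(4)$, and the appeal to Theorem \ref{char-G} for $(4)\Rightarrow(2)$ are all verified correctly. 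One caveat deserves mention: Section 2 of the paper defines $b\Gamma_n$ abstractly as the \v{S}ilov boundary of the rational-function algebra on $\Gamma_n$, whereas your $(1)\Leftrightarrow(2)$ step takes the concrete description $b\Gamma_n=\pi_n(\mathbb T^n)$ as its starting point. Since the paper explicitly recalls that description at the start of Section 3, your usage is consistent with the text as written; but if one insists on the \v{S}ilov-boundary definition, the identification $b\Gamma_n=\pi_n(\mathbb T^n)$ is itself a nontrivial part of the cited result (maximum modulus of $f\circ\pi_n$ on the polydisc gives one inclusion, while peaking-function arguments are needed for the other), and your proof does not supply it. In short, you have given a complete and rather elegant elementary proof of the equivalence of $(2)$, $(3)$, $(4)$, and of $(1)$ with these, relative to the recalled concrete description of $b\Gamma_n$.
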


\noindent Recall that a $\Gamma_n$-contraction is an $n$-tuple of commuting Hilbert space operators $(S_1, \dots ,S_{n-1},P)$ that has $\Gamma_n$ as a spectral set.

\begin{defn}
A $\Gamma_n$-contraction $(S_1,\dots, S_{n-1},P)$ is called \textit{pure} or $C._0$ if $P$ is a pure contraction, that is, ${P^*}^n \rightarrow 0$ strongly as $n\rightarrow \infty$.
\end{defn}

\noindent The unitaries, isometries and co-isometries are special classes of contractions. There are natural analogues for these classes in the literature of $\Gamma_n$-contractions, e.g. \cite{ay-jot, Bisai-Pal1, BSR, sourav3}.
\begin{defn}
Let $S_1,\dots, S_{n-1},P$ be commuting operators on $\mathcal H$.
Then $(S_1,\dots, S_{n-1},P)$ is called
\begin{itemize}
\item[(i)] a $\Gamma_n$-\textit{unitary} if $S_1,\dots, S_{n-1},P$
are normal operators and $\sigma_T
(S_1,\dots, S_{n-1},P) \subseteq b\Gamma_n$ ;

\item[(ii)] a $\Gamma_n$-isometry if there exists a Hilbert space
$\mathcal K \supseteq \mathcal H$ and a $\Gamma_n$-unitary
$(T_1,\dots,T_{n-1},U)$ on $\mathcal K$ such that $\mathcal H$ is
a joint invariant subspace of $S_1,\dots, S_{n-1},P$ and that
\[
(T_1|_{\mathcal H},\dots, T_{n-1}|_{\mathcal H},U|_{\mathcal
H})=(S_1,\dots, S_{n-1},P) ;\]

\item[(iii)] a $\Gamma_n$-co-isometry if the adjoint
$(S_1^*,\dots, S_{n-1}^*,P^*)$ is a $\Gamma_n$-isometry.
\end{itemize}

\end{defn}

The following theorem provides a set of characterizations for the $\Gamma_n$-unitaries.

\begin{thm}[\cite{BSR}, Theorem 4.2]\label{G-unitary}
Let $(S_1,\dots, S_{n-1}, P)$ be a commuting tuple of bounded
operators. Then the following are equivalent.

\begin{enumerate}

\item $(S_1,\dots,S_{n-1},P)$ is a $\Gamma_n$-unitary,

\item $P$ is a unitary and $(S_1,\dots,S_{n-1},P)$ is a
$\Gamma_n$-contraction,

\item $P$ is a unitary,
$(\frac{n-1}{n}S_1,\frac{n-2}{n}S_2,\dots,\frac{1}{n}S_{n-1})$ is
a $\Gamma_{n-1}$-contraction and $S_i = S_{n-i}^* P$ for
$i=1,\dots,n-1$.
\end{enumerate}
\end{thm}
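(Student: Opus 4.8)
The three conditions are most naturally linked in a cycle $(1)\Rightarrow(2)\Rightarrow(3)\Rightarrow(1)$. The two spectral-theoretic passages $(1)\Rightarrow(2)$ and $(3)\Rightarrow(1)$ reduce to the joint spectral theorem for a commuting family of normal operators together with Theorem \ref{thm:DB}; the genuinely hard step is the operator identity $S_i=S_{n-i}^*P$ that appears inside $(2)\Rightarrow(3)$.

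For $(1)\Rightarrow(2)$ I would start from the fact that $S_1,\dots,S_{n-1},P$ are commuting normals, so by Fuglede--Putnam they doubly commute and admit a joint spectral measure $E$ supported on $\sigma_T(S_1,\dots,S_{n-1},P)\subseteq b\Gamma_n$. For any $f\in\mathcal R(\Gamma_n)$ the poles avoid $b\Gamma_n$, so $f(S_1,\dots,S_{n-1},P)=\int f\,dE$ and $\|f(S_1,\dots,S_{n-1},P)\|=\sup_{\sigma_T}|f|\le\|f\|_{\infty,\Gamma_n}$; since moreover $\sigma_T\subseteq b\Gamma_n\subseteq\Gamma_n$, this makes $\Gamma_n$ a spectral set, i.e.\ the tuple is a $\Gamma_n$-contraction. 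By Theorem \ref{thm:DB}$(2)$ every point of $b\Gamma_n$ has last coordinate of modulus one, so $\sigma(P)\subseteq\mathbb T$; being normal, $P$ is then unitary.

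For $(2)\Rightarrow(3)$ the $\Gamma_{n-1}$-contractivity of $\bigl(\frac{n-1}{n}S_1,\dots,\frac1nS_{n-1}\bigr)$ is immediate from Lemma \ref{lem:BSR2}. The crux is the relation $S_i=S_{n-i}^*P$, and this is the main obstacle. I would obtain it from the structural fact that a $\Gamma_n$-contraction admits fundamental operators $F_1,\dots,F_{n-1}$ on $\overline{\operatorname{Ran}}\,D_P$ satisfying $S_i-S_{n-i}^*P=D_PF_iD_P$, where $D_P=(I-P^*P)^{1/2}$; since $P$ is unitary, $D_P=0$ and the identity follows at once. This is precisely the place where the full strength of the (complete) spectral-set hypothesis is consumed, and in a self-contained treatment one derives that operator identity by feeding the family of rational functions cutting out $\Gamma_n$ (equivalently, exploiting the parametrization of Theorem \ref{char-G}) into the von Neumann inequality and expanding the resulting operator inequality.

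Finally, $(3)\Rightarrow(1)$ is elementary once the identity is in hand. Because $P$ is unitary and commutes with each $S_j$, it commutes with $P^*$ and each $S_j$ commutes with $P^*$, whence $S_i^*=(S_{n-i}^*P)^*=P^*S_{n-i}=S_{n-i}P^*$. A short computation then gives $S_iS_i^*=S_iS_{n-i}P^*=S_i^*S_i$, so each $S_i$ is normal, and likewise $S_iS_j^*=S_{n-j}S_iP^*=S_j^*S_i$, so the whole family $\{S_1,\dots,S_{n-1},P\}$ doubly commutes. Hence it has a joint spectral measure, and for any $(\lambda_1,\dots,\lambda_{n-1},\mu)\in\sigma_T$ the spectral mapping theorem yields $|\mu|=1$ (from $P$ unitary), $\lambda_i=\bar\lambda_{n-i}\mu$ (from $S_i=S_{n-i}^*P$), and $\bigl(\frac{n-1}{n}\lambda_1,\dots,\frac1n\lambda_{n-1}\bigr)\in\Gamma_{n-1}$ (from the $\Gamma_{n-1}$-contraction having joint spectrum in $\Gamma_{n-1}$). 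These are exactly the three requirements of Theorem \ref{thm:DB}$(3)$, which is equivalent to membership in $b\Gamma_n$; thus $\sigma_T\subseteq b\Gamma_n$ and the tuple is a $\Gamma_n$-unitary.
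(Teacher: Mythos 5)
Your proof is correct, but there is nothing in the paper to compare it against: Theorem \ref{G-unitary} is imported verbatim from \cite{BSR} (Theorem 4.2 there) and the paper reproduces only the statement, not the argument. Your cycle $(1)\Rightarrow(2)\Rightarrow(3)\Rightarrow(1)$ checks out. For $(1)\Rightarrow(2)$, the joint spectral measure supported on $\sigma_T\subseteq b\Gamma_n$, the projection property of the Taylor spectrum, and Theorem \ref{thm:DB} do give both the von Neumann inequality over $\Gamma_n$ and $\sigma(P)\subseteq\mathbb T$, hence $P$ unitary. The double-commutation computations in $(3)\Rightarrow(1)$ (normality of each $S_i$ from $S_i^*=S_{n-i}P^*$, then verification of the three conditions in Theorem \ref{thm:DB}(3) on each joint spectral point) are sound. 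The step worth flagging is $(2)\Rightarrow(3)$: you obtain $S_i=S_{n-i}^*P$ by feeding $P$ unitary, hence $D_P=0$, into the fundamental-operator equation $S_i-S_{n-i}^*P=D_PF_iD_P$ of Theorem \ref{existence-uniqueness} (quoted from \cite{sourav6}). This is a genuine shortcut relative to the original proof in \cite{BSR}, which derives that identity directly from positivity properties of operator pencils attached to a $\Gamma_n$-contraction; your route is non-circular only because the existence of the fundamental operator tuple requires nothing beyond the $\Gamma_n$-contraction hypothesis and is established in \cite{sourav6} independently of the unitary characterization being proved here --- a dependence you should state explicitly. Granting that (as the present paper does, by treating Theorem \ref{existence-uniqueness} as a black box), your argument is complete, and it buys a considerably shorter proof of the hardest implication at the price of importing a deeper structural result.
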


\noindent The following result is a structure theorem for
$\Gamma_n$-isometries.

\begin{thm}[\cite{BSR}, Theorem 4.12]\label{G-isometry}
Let $S_1,\dots,S_{n-1},P$ be commuting operators on a Hilbert
space $\mathcal H$. Then the following are equivalent:
\begin{enumerate}

\item $(S_1,\dots,S_{n-1},P)$ is a $\Gamma_n$-isometry ; \item $P$
is isometry, $S_i=S_{n-i}^*P$ for each $i=1,\dots,n-1$ and
$(\frac{n-1}{n}S_1,\frac{n-2}{n}S_2,\dots,\frac{1}{n}S_{n-1})$ is
a $\Gamma_{n-1}$-contraction ; \item $($Wold-Decomposition$)$:
there is an orthogonal decomposition $\mathcal H=\mathcal H_1
\oplus \mathcal H_2$ into common invariant subspaces of
$S_1,\dots,S_{n-1}, P$ such that $(S_i|_{\mathcal
H_1},\dots,S_{n-1}|_{\mathcal H_1},P|_{\mathcal H_1})$ is a
$\Gamma_n$-unitary and the counter part $(S_1|_{\mathcal
H_2},\dots,S_{n-1}|_{\mathcal H_2},P|_{\mathcal H_2})$ is a pure
$\Gamma_n$-isometry ; \item $(S_1,\dots,S_{n-1},P)$ is a
$\Gamma_n$-contraction and $P$ is an isometry.

\end{enumerate}

\end{thm}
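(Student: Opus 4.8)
The plan is to show the four conditions equivalent by running the cycle $(1)\Rightarrow(4)\Rightarrow(2)\Rightarrow(3)\Rightarrow(1)$, with Theorem \ref{G-unitary} used to recognise $\Gamma_n$-unitaries and Lemma \ref{lem:BSR2} used to pass to the $\Gamma_{n-1}$-contraction condition. The geometric core is the Wold decomposition of the isometry $P$, and the algebraic core is the family of identities $S_i=S_{n-i}^*P$. I would first dispose of $(1)\Rightarrow(4)$: by definition a $\Gamma_n$-isometry is the restriction of a $\Gamma_n$-unitary $(T_1,\dots,T_{n-1},U)$ on some $\mathcal K\supseteq\mathcal H$ to a jointly invariant subspace $\mathcal H$. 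Since the polynomial (equivalently rational) functional calculus commutes with restriction to an invariant subspace, the restriction $(S_1,\dots,S_{n-1},P)$ inherits the spectral-set inequality and is a $\Gamma_n$-contraction, while $P=U|_{\mathcal H}$ is an isometry because $U$ is unitary and $\mathcal H$ is $U$-invariant.

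For $(4)\Rightarrow(2)$ the $\Gamma_{n-1}$-contraction condition is immediate from Lemma \ref{lem:BSR2}, and $P$ is an isometry by hypothesis, so the only content is the identities $S_i=S_{n-i}^*P$. Here I would appeal to the fundamental-operator equations of a $\Gamma_n$-contraction from the structure theory of \cite{BSR}, which furnish operators $F_1,\dots,F_{n-1}$ on $\overline{\mathrm{Ran}}\,D_P$ with $S_i-S_{n-i}^*P=D_PF_iD_P$, where $D_P=(I-P^*P)^{1/2}$; since $P$ is an isometry $D_P=0$ and the identities follow at once. (Alternatively they can be extracted directly from the spectral-set inequality tested against the distinguished rational functions that cut out $\Gamma_n$.)

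The substantive step is $(2)\Rightarrow(3)$. Take the Wold decomposition of the isometry $P$, setting $\mathcal H_1=\bigcap_{k\ge0}P^k\mathcal H$ and $\mathcal H_2=\mathcal H\ominus\mathcal H_1$, so $P|_{\mathcal H_1}$ is unitary and $P|_{\mathcal H_2}$ is a pure isometry. Because each $S_i$ commutes with $P$ we have $S_i(\mathrm{Ran}\,P^k)\subseteq\mathrm{Ran}\,P^k$, hence $\mathcal H_1$ is $S_i$-invariant; and for $x\in\mathcal H_1\subseteq\mathrm{Ran}\,P$ we have $PP^*x=x$, so the relation $S_i=S_{n-i}^*P$ gives $S_{n-i}^*x=S_iP^*x\in\mathcal H_1$ (using $P^*\mathcal H_1=\mathcal H_1$), whence $\mathcal H_1$ is invariant under every $S_j^*$ as well. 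Thus $\mathcal H_1$, and therefore $\mathcal H_2$, reduces the whole tuple. On $\mathcal H_1$ the last component is unitary, the relations $S_i=S_{n-i}^*P$ persist, and the $\Gamma_{n-1}$-contraction property is inherited, so Theorem \ref{G-unitary}$(3)$ identifies $(S_1|_{\mathcal H_1},\dots,S_{n-1}|_{\mathcal H_1},P|_{\mathcal H_1})$ as a $\Gamma_n$-unitary; on $\mathcal H_2$ the conditions of $(2)$ hold with $P$ a pure isometry, giving the pure $\Gamma_n$-isometry summand.

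Finally, $(3)\Rightarrow(1)$ follows because a $\Gamma_n$-unitary is trivially a $\Gamma_n$-isometry and the orthogonal direct sum of two $\Gamma_n$-isometries is again one (direct sum the two ambient $\Gamma_n$-unitary extensions), closing the cycle. I expect the main obstacle to lie precisely in $(2)\Rightarrow(3)$: beyond the reduction argument above, one must certify that the shift summand on $\mathcal H_2$ is \emph{genuinely} a pure $\Gamma_n$-isometry and not merely a tuple satisfying the algebraic relations. Establishing this amounts to constructing a minimal $\Gamma_n$-unitary extension of the pure part, for which I would build the bilateral-shift (minimal unitary) extension $U$ of the pure isometry $P$ on a larger space and extend each $S_i$ to commuting operators $T_i$ obeying $T_i=T_{n-i}^*U$ and the $\Gamma_{n-1}$-contraction constraint; verifying that these $T_i$ exist, commute, and yield a $\Gamma_n$-unitary via Theorem \ref{G-unitary} is the delicate part of the argument.
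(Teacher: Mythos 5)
This theorem is quoted in the paper from \cite{BSR} without proof, so there is no in-paper argument to compare against; your proposal has to be judged against the standard proof of \cite{BSR}, Theorem 4.12, whose strategy it partially follows.

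Your cycle $(1)\Rightarrow(4)\Rightarrow(2)\Rightarrow(3)\Rightarrow(1)$ is structurally sensible, and three of its four links are argued correctly and completely: the restriction argument for $(1)\Rightarrow(4)$ (with polynomial convexity of $\Gamma_n$, i.e.\ Lemma \ref{poly-convex}, supplying the Taylor-spectrum containment); the derivation of $S_i=S_{n-i}^*P$ in $(4)\Rightarrow(2)$ from the fundamental-operator equations of Theorem \ref{existence-uniqueness}, since $D_P=0$ for an isometry; and, inside $(2)\Rightarrow(3)$, the verification that $\mathcal H_1=\bigcap_k P^k\mathcal H$ reduces every $S_i$ (the computation $S_{n-i}^*x=S_iP^*x$ on $\mathcal H_1$ is exactly right), after which Theorem \ref{G-unitary}(3) identifies the unitary part. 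The genuine gap is at the end of $(2)\Rightarrow(3)$, and you name it yourself but do not close it: you must show that the tuple on $\mathcal H_2$, which satisfies the hypotheses of $(2)$ with $P|_{\mathcal H_2}$ a pure isometry, actually admits a $\Gamma_n$-unitary extension. This is not a loose end that can be deferred; it is essentially the implication $(2)\Rightarrow(1)$ in the pure case, i.e.\ the entire analytic content of the theorem, and without it your cycle never establishes that $(2)$, $(3)$ or $(4)$ implies $(1)$.

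To close the gap one must carry out the construction you only gesture at: identify $\mathcal H_2$ with $H^2(\mathcal E)$, $\mathcal E=\ker\,(P|_{\mathcal H_2})^*$, so that $P|_{\mathcal H_2}$ becomes $T_z$; use commutativity with $T_z$ to write each $S_i|_{\mathcal H_2}$ as an analytic Toeplitz operator $T_{\phi_i}$; use the identities $T_{\phi_i}=T_{\phi_{n-i}}^*T_z$ to force the symbols to be linear pencils $\phi_i(z)=F_i^*+F_{n-i}z$; then extend to the multiplication tuple $(M_{\phi_1},\dots,M_{\phi_{n-1}},M_z)$ on $L^2(\mathcal E)$, check mutual commutativity (pointwise commutation of the symbols), check $\phi_i(z)=\phi_{n-i}(z)^*z$ on $\mathbb T$, and transfer the $\Gamma_{n-1}$-contractivity of $\bigl(\frac{n-1}{n}T_{\phi_1},\dots,\frac{1}{n}T_{\phi_{n-1}}\bigr)$, available from Lemma \ref{lem:BSR2}, to the symbols via Lemma \ref{lem:charming}, so that Theorem \ref{G-unitary}(3) certifies the $L^2$ tuple as a $\Gamma_n$-unitary. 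Be warned of a circularity trap here: you cannot shortcut this step by citing Theorem \ref{model1} of the present paper, because both halves of that theorem are proved by invoking Theorem \ref{G-isometry} itself (parts (2) and (3)); the Toeplitz analysis above must be done from scratch, which is precisely what \cite{BSR} does.
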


\vspace{0.3cm}

\section{Distinguished varieties in the symmetrized polydisc}\label{sec:04}

\vspace{0.3cm}

\noindent We begin by recalling a few definitions and results from the literature of algebraic geometry.

\begin{defn}
 An \textit{affine algebraic curve} or simply an \textit{algebraic curve} in the affine space $\mathbb A^n_{\mathbb C}$ is an algebraic set that has dimension one as a complex manifold.
\end{defn}

\begin{defn}
An affine algebraic set or variety $V$ of dimension $k \,(\leq n)$ in $\mathbb A_{\mathbb C}^n$ is called a \textit{set-theoretic complete intersection} or simply a \textit{complete intersection} if $V$ is defined by $n-k$ independent polynomials in $\mathbb C[z_1.\dots, z_n]$. Thus an algebraic curve $C$ in $\mathbb A^n_{\mathbb C}$ is a complete intersection if $C$ is defined by $n-1$ independent polynomials in $\mathbb C[z_1,\dots, z_n]$.
\end{defn}
Note that in general set-theoretic complete intersection and complete intersection are different, but here we deal only with Zariski closed sets and thus sometimes by 'complete intersection' we will mean set-theoretic complete intersection.

\begin{thm}[\cite{Ro:H}, CH-I, Corollary 1.6]
Every algebraic set in $\mathbb A_{\mathbb C}^n$ can be uniquely expressed as a union of affine algebraic varieties, no one containing another.
\end{thm}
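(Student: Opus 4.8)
The plan is to prove the two standard halves separately---existence of a decomposition into irreducible pieces, and its uniqueness after redundant pieces are discarded---keeping in mind that, by the paper's definition, an affine algebraic variety is precisely an irreducible algebraic set, so the claim is that every algebraic set decomposes irredundantly into irreducible algebraic sets. The single structural fact driving existence is that $\mathbb{A}^n_{\mathbb{C}}$ is a \emph{Noetherian} space in its Zariski topology, i.e.\ it satisfies the descending chain condition on algebraic sets. I would derive this from the Hilbert Basis Theorem: since $\mathbb{C}[z_1,\dots,z_n]$ is Noetherian, there is no infinite strictly ascending chain of ideals, and the inclusion-reversing passage $T \mapsto Z(T)$ from ideals to zero sets converts this into the nonexistence of any infinite strictly descending chain $W_1 \supsetneq W_2 \supsetneq \cdots$ of algebraic sets.

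For existence I would run a minimal-counterexample (Noetherian induction) argument. Let $\Sigma$ be the family of all algebraic sets in $\mathbb{A}^n_{\mathbb{C}}$ that \emph{cannot} be written as a finite union of irreducible algebraic sets, and suppose toward a contradiction that $\Sigma \neq \emptyset$. By the descending chain condition, $\Sigma$ contains a minimal element $W$. Such a $W$ is not irreducible, since an irreducible set is its own one-term decomposition; hence $W = W' \cup W''$ with $W', W''$ proper algebraic subsets. By minimality of $W$, neither $W'$ nor $W''$ belongs to $\Sigma$, so each is a finite union of irreducible sets, and therefore so is $W$, contradicting $W \in \Sigma$. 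Thus $\Sigma = \emptyset$ and every algebraic set is a finite union $W = V_1 \cup \cdots \cup V_r$ of irreducibles. Finally, whenever $V_i \subseteq V_j$ for some $i \neq j$ I would delete $V_i$; after finitely many deletions this yields an irredundant decomposition in which no component contains another.

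For uniqueness, suppose $W = V_1 \cup \cdots \cup V_r = W_1 \cup \cdots \cup W_s$ are two irredundant decompositions into irreducible algebraic sets. Fixing $i$, write $V_i = \bigcup_j (V_i \cap W_j)$ as a finite union of algebraic subsets of the irreducible set $V_i$; irreducibility forces $V_i = V_i \cap W_j$, i.e.\ $V_i \subseteq W_j$, for some $j$. Applying the same reasoning with the roles of the two decompositions reversed gives $W_j \subseteq V_k$ for some $k$, whence $V_i \subseteq V_k$. Irredundancy of the first decomposition then forces $k = i$, so all inclusions are equalities and $V_i = W_j$. This matches each $V_i$ with a component $W_j$, and by symmetry the matching is a bijection, so the two decompositions agree up to reordering.

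The only genuinely nontrivial input is the Noetherian property, and it rests entirely on the Hilbert Basis Theorem; I expect that to be the main obstacle in the sense that it is the one step requiring substantive commutative-algebra content, after which existence follows formally from the descending chain condition and uniqueness follows formally from irreducibility.
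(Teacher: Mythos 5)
Your proof is correct: it is the standard Noetherian-induction argument (DCC on algebraic sets via the Hilbert Basis Theorem for existence, plus the irreducibility/irredundancy argument for uniqueness). The paper itself offers no proof of this statement---it is quoted directly from Hartshorne (Ch.~I, Corollary 1.6)---and your argument is essentially the one given in that reference, the only cosmetic imprecision being that the chain condition is transferred via $W \mapsto I(W)$ together with $Z(I(W)) = W$, rather than via $T \mapsto Z(T)$ as you phrased it.
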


\begin{lem}\label{lem:ag1}

Let $C$ be an affine algebraic curve in $\mathbb A_{\mathbb C}^n$ such that $V_b=C\cap V(z_n - b)$ contains finitely many points for any $b\in\mathbb C$, where $V(z_n-b)$ is the variety generated by $z_n-b \in \mathbb C[z_1,\dots, z_n]$. Then there exists a positive integer $\widetilde k$ such that $\# (V_b)\leq \widetilde k$ for any $b\in \mathbb C$, where $\# (V_b)$ is the cardinality of the set $V_b$.
\end{lem}

\begin{proof}

We have that $V_b=C\cap V(z_n - b)$. Since $V_b$ is finite, taking closure in the projective space $\mathbb P_{\mathbb C}^n$ we have
\[
V_b \subseteq \overline{C} \cap \overline{ V(z_n-b)}\,.
\]
Now $\#\; (\overline{C} \cap \overline{ V(z_n-b)})\leq degree\,(\overline C)=\widetilde k$ (say). Hence $\#\,(V_b)\leq \widetilde k$.

\end{proof}

\begin{lem}\label{lem:ag2}
Let $X$ be an affine algebraic variety in $\mathbb A_{\mathbb C}^n$ such that $X\cap V(z_n-b)$ is a finite set for every $b\in\mathbb C$. Then $X$ is an algebraic curve.
\end{lem}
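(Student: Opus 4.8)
The plan is to compute the dimension of $X$ directly. Since $X$ is an irreducible variety, proving that $X$ is an algebraic curve amounts to showing $\dim X = 1$. I will show that the finiteness hypothesis forces $\dim X \leq 1$; since only positive-dimensional varieties are under consideration in this paper (so that $\dim X \geq 1$), this yields $\dim X = 1$. The main tool will be the standard dimension-drop principle for intersection with a hypersurface (Krull's principal ideal theorem / the affine dimension theorem): if $Y$ is an irreducible variety of dimension $d$ in $\mathbb A_{\mathbb C}^n$ and $f\in\mathbb C[z_1,\dots,z_n]$ does not vanish identically on $Y$ and is not a unit, then every nonempty irreducible component of $Y\cap V(f)$ has dimension at least $d-1$.

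First I would record that the coordinate $z_n$ is non-constant on $X$. Indeed, if $z_n\equiv c$ on $X$ for some constant $c$, then $X\subseteq V(z_n-c)$, so $X\cap V(z_n-c)=X$; but $X$ has positive dimension and is therefore infinite, contradicting the hypothesis that $X\cap V(z_n-b)$ is finite for \emph{every} $b$. Consequently $z_n-b$ does not vanish identically on $X$ for any $b\in\mathbb C$, that is, $X\not\subseteq V(z_n-b)$ for all $b$.

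Next I would argue by contradiction, assuming $\dim X=d\geq 2$. I would choose $b$ in the image of $z_n|_X$, which is nonempty since $X\neq\emptyset$; then $X\cap V(z_n-b)$ is nonempty, and by the previous step $z_n-b$ is not identically zero on $X$. Applying the dimension-drop principle to $Y=X$ and $f=z_n-b$, every irreducible component of $X\cap V(z_n-b)$ has dimension at least $d-1\geq 1$ and therefore contains infinitely many points, contradicting the assumed finiteness of $X\cap V(z_n-b)$. Hence $d\leq 1$, and combined with $\dim X\geq 1$ this gives $\dim X=1$, so $X$ is an algebraic curve.

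The one delicate point is the affine (rather than projective) nature of the intersection: a hyperplane need not meet a positive-dimensional affine variety at all, so the nonemptiness required to invoke the dimension-drop theorem cannot be taken for granted and must be secured by selecting $b$ in the range of $z_n$ on $X$, not arbitrarily. Once nonemptiness is arranged, the dimension-drop theorem applies verbatim and the argument closes; this parallels the projective-closure reasoning already used in Lemma \ref{lem:ag1}, but avoids having to track the behaviour at the hyperplane at infinity.
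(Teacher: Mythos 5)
Your proposal is correct, and it reaches the conclusion by a genuinely different and more elementary route than the paper. The paper treats the coordinate $z_n$ as defining a one-parameter family: it passes from $X$ to a map onto $\mathbb A^1_{\mathbb C}$, invokes properness and flatness (via a blow-up) and then the fibre-dimension formula, so that finiteness of all fibres $X\cap V(z_n-b)$ forces $\dim X = 1$. You instead fix a \emph{single} well-chosen value $b$ in the image of $z_n|_X$ and apply Krull's principal ideal theorem (the hypersurface dimension-drop): if $\dim X = d \geq 2$, every nonempty component of $X\cap V(z_n-b)$ has dimension at least $d-1\geq 1$, hence is infinite, contradicting the hypothesis. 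What your approach buys is economy: no blow-up, no properness or flatness verification (which in the paper's sketch is asserted rather than proved, and whose dimension formula as printed is garbled), only the Hauptidealsatz plus the fact that a positive-dimensional complex variety is infinite. What the paper's approach buys, at least in principle, is a direct computation of $\dim X$ via the fibre formula rather than an upper bound by contradiction; but both arguments equally rely on the standing convention that only positive-dimensional varieties are considered (a single point satisfies the hypothesis vacuously and is not a curve), a point you correctly flag, as you also flag the genuine subtlety that an affine hyperplane may miss $X$ entirely, which is why $b$ must be chosen in the image of $z_n|_X$ before the dimension-drop theorem can be applied.
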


\begin{proof}

The collection $\{  V(z_n-b)\,:\,b\in \mathbb C  \}$ defines a one parameter family over $\mathbb C$ and by hypothesis $X\cap V(z_n-b)$ is finite for every $b\in\mathbb C$. Let $A= \cap_{b\in\mathbb C}\,V(z_n-b)$. Then we can define a regular morphism
$
\phi:X\setminus A \rightarrow \mathbb A_{\mathbb C}^1
$
and hence a proper flat map $\phi^*:X^* \rightarrow \mathbb A_{\mathbb C}^1$, where $X^*$ is the blow up of $X$ along $A$ such that the fibre
\[
X_b^*={\phi^*}{-1}(\{b\})=X\cap V(z_n-b).
\]
Since by hypothesis $X\cap V(z_n-b)$ is a finite set, it follows that $\dim \,(X^*)=\{0\}$. Since $\phi^*$ is proper and flat, we also have
\[
\dim\,(X^*_b) =\dim\,(X^*)-\dim \,(\mathbb A_{\mathbb C}^1).
\]
So we have $\dim\,(X^*)=\dim \,(\mathbb A_{\mathbb C}^1)$. Since $X^*$ is birational with $X$, it follows that
\[
\dim\,(X)=\dim\,(X^*)=1.
\]
Hence $X$ is an algebraic curve.
\end{proof}

\begin{defn}
Let $M$ be an $R$-module. An element $a\in R$ is called an $M$-\textit{regular element} (or a \textit{non-zerodivisor} of M) if $ax=0$ with $x\in M$ implies $x=0$. A sequence $\{ a_0,\dots , a_m \}\;\; (m\geq 0)$ of elements of $R$ is called an $M$-\textit{regular sequence} if 
\begin{itemize}
\item[(a)] $M\neq (a_0,\dots, a_m)\,.M$,
\item[(b)] for $i=0,\dots ,m-1$, $a_{i+1}$ is not a zero divisor of $M/(a_0,\dots, a_i).\,M$.
\end{itemize} 

\end{defn}

\begin{thm}\label{thm:Regular}
Let $(R, \mathcal M)$ be a Noetherian local ring which is Cohen-Macaulay and suppose $\{ a_0,\dots ,a_m \} $ is a sequence of elements in $\mathcal M$. If $I=(a_0,\dots ,a_m)$, then the following statements are equivalent.
\begin{itemize}
\item[(a)] $\{ a_0,\dots, a_m \}$ is a regular sequence.
\item[(b)] $\{ a_0,\dots ,a_m \}$ is independent in $R$, that is, $I$ is a complete intersection.
\end{itemize}
\end{thm}
This is an elementary result and a proof to this result can be found in any textbook of algebraic geometry, e.g. \cite{Kunj}, Corollary 5.13 of Chapter-V.

\subsection{Representing distinguished varieties in the symmetrized polydisc}

In this subsection, we shall use the same notations and terminologies
as in \cite{AM05} introduced by Agler and M$^{\textup{c}}$Carthy.
We say that a function $f$ is \textit{holomorphic} on a
distinguished variety $\Lambda$ in $\mathbb G_n$, if for every
point in $\Lambda$, there is an open ball $B$ in $\mathbb C^n$
containing the point and a holomorphic function $F$ in $n$
variables on $B$ such that $F|_{B\cap \Lambda}=f|_{B \cap
\Lambda}$. We shall denote by $A(\Lambda)$ the Banach algebra of
functions that are holomorphic on $\Lambda$ and continuous on
$\overline{\Lambda}$. This is a closed unital subalgebra of
$C(\partial \Lambda)$ that separates points. The maximal ideal
space of $A(\Lambda)$ is $\overline{\Lambda}$.

For a finite measure $\mu$ on $\Lambda$, let $H^2(\mu)$ be the
closure of polynomials in $L^2(\partial \Lambda, \mu)$. If $G$ is
an open subset of a Riemann surface $S$ and $\nu$ is a finite
measure on $\overline G$, let $\mathcal A^2(\nu)$ denote the
closure in $L^2(\partial G, \nu)$ of $A(G)$. A point $\lambda$ is
said to be a \textit{bounded point evaluation} for $H^2(\mu)$ or
$\mathcal A^2(\nu)$ if evaluation at $\lambda$, \textit{a priori}
defined on a dense set of analytic functions, extends continuously
to the whole Hilbert space $H^2(\mu)$ or $\mathcal A^2(\nu)$
respectively. If $\lambda$ is a bounded point evaluation, then the
function defined by
$$ f(\lambda)=\langle f,k_{\lambda} \rangle $$
is called the \textit{evaluation functional at} $\lambda$. The following result
is due to Agler and M$^{\textup{c}}$Carthy.

\begin{lem}[\cite{AM05}, Lemma 1.1]\label{basiclem1}
Let $S$ be a compact Riemann surface. Let $G\subseteq S$ be a
domain whose boundary is a finite union of piecewise smooth Jordan
curves. Then there exists a measure $\nu$ on $\partial G$ such
that every point $\lambda$ in $G$ is a bounded point evaluation
for $\mathcal A^2(\nu)$ and such that the linear span of the
evaluation functional is dense in $\mathcal A^2(\nu)$.
\end{lem}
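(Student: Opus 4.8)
The plan is to take $\nu$ to be harmonic measure on $\partial G$ based at a single fixed interior point, and then verify the two required properties separately. Since $G$ is a domain whose boundary consists of finitely many piecewise smooth Jordan curves, $\overline{G}$ is a finite bordered Riemann surface and $G$ is regular for the Dirichlet problem, the finitely many corners coming from piecewise smoothness being harmless. I would fix a base point $\lambda_0\in G$ and set $\nu=\omega_{\lambda_0}$, the harmonic measure on $\partial G$ seen from $\lambda_0$, which has full support. For every $\lambda\in G$ the harmonic measure $\omega_\lambda$ is mutually absolutely continuous with $\nu$; writing $P(\lambda,\cdot)=d\omega_\lambda/d\nu$ for the resulting Poisson kernel, Harnack's inequality (applied along a chain of balls joining $\lambda_0$ to $\lambda$) shows that $P(\lambda,\cdot)$ is bounded above and below by positive constants, so in particular $P(\lambda,\cdot)\in L^2(\nu)$; note that the singular behaviour of the two harmonic measure densities at a corner is the same and cancels in the ratio, so boundedness survives the corners.

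First I would establish the bounded point evaluations. For $f\in A(G)$ the real and imaginary parts of $f$ are harmonic on $G$ and continuous on $\overline{G}$, so $f$ is recovered from its boundary values by $f(\lambda)=\int_{\partial G} f\, d\omega_\lambda=\int_{\partial G} f\,P(\lambda,\cdot)\,d\nu$. Since $P(\lambda,\cdot)\in L^2(\nu)$ for each fixed $\lambda\in G$, the Cauchy--Schwarz inequality gives $|f(\lambda)|\le \|P(\lambda,\cdot)\|_{L^2(\nu)}\,\|f\|_{L^2(\nu)}$. Thus evaluation at $\lambda$, \textit{a priori} defined on the dense subspace $A(G)$, extends to a bounded functional on $\mathcal A^2(\nu)$, so $\lambda$ is a bounded point evaluation and the reproducing kernel $k_\lambda$ is the orthogonal projection of $P(\lambda,\cdot)$ onto $\mathcal A^2(\nu)$.

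It then remains to show that $\overline{\operatorname{span}}\{k_\lambda:\lambda\in G\}=\mathcal A^2(\nu)$, equivalently that the only $g\in\mathcal A^2(\nu)$ orthogonal to every $k_\lambda$ is $g=0$. Such a $g$ satisfies $0=\langle g,k_\lambda\rangle=\int_{\partial G} g\,P(\lambda,\cdot)\,d\nu$ for all $\lambda\in G$. Because $g$ is an $L^2(\nu)$-limit of functions $f_n\in A(G)$, and boundedness of the point evaluations forces $f_n\to \hat g$ locally uniformly on $G$, this integral equals the holomorphic interior extension $\hat g(\lambda)$; hence $\hat g\equiv 0$ on $G$. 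The heart of the lemma is the boundary uniqueness assertion that a function in $\mathcal A^2(\nu)$ whose interior extension vanishes identically must itself vanish $\nu$-almost everywhere. I expect this to be the main obstacle: on the disc it is precisely the F.\ and M.\ Riesz / inner--outer uniqueness theorem, namely that a nonzero $H^2$ function cannot vanish on a boundary set of positive measure, and on a finite bordered Riemann surface it is the corresponding statement from the Hardy space theory attached to harmonic measure, where the Szeg\H{o} (log-integrability) condition guarantees that boundary values of a nonzero element of $\mathcal A^2(\nu)$ cannot vanish on a set of positive $\nu$-measure. Granting this, $g=0$, so the span of the evaluation functionals is dense and the proof is complete. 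The only technical points to monitor are the regularity of harmonic measure at the finitely many corners and the boundedness of $P(\lambda,\cdot)$ there for fixed interior $\lambda$; both are standard and may, if desired, be reduced to the planar model by local uniformization.
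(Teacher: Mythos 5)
This paper contains no proof of Lemma \ref{basiclem1}; it is imported verbatim from Agler and McCarthy \cite{AM05}, so your attempt can only be compared with the argument given there, which it essentially reproduces: harmonic measure at a base point together with Harnack's inequality yields the bounded point evaluations, and density of the evaluation functionals reduces to the injectivity of the map sending $g\in\mathcal A^2(\nu)$ to its holomorphic extension $\hat g$ on $G$. Your proof is correct, with one remark on the black box you invoke for that final injectivity step: what is actually needed is the Fatou-type theorem that each $g\in\mathcal A^2(\nu)$ is recovered $\nu$-a.e.\ as the nontangential boundary value of $\hat g$ (classical for finitely bordered surfaces with piecewise smooth boundary, e.g.\ by local conformal transfer to the disc), whereas the F.\ and M.\ Riesz/Privalov statement that a nonzero function cannot vanish on a boundary set of positive measure presupposes this identification rather than substituting for it. Since the statement you grant --- that $\hat g\equiv 0$ forces $g=0$ $\nu$-a.e.\ --- is exactly the right one, this is a matter of attribution, not a gap in the argument.
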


\begin{lem}\label{basiclem2}
Let $\Lambda$ be a one-dimensional distinguished algebraic set in
$\mathbb G_n$. Then there exists a measure $\mu$ on $\partial
\Lambda$ such that every point in $\Lambda$ is a bounded point
evaluation for $H^2(\mu)$ and such that the span of the bounded
evaluation functionals is dense in $H^2(\mu)$.
\end{lem}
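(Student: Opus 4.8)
The plan is to transfer the entire problem onto a compact Riemann surface and then invoke Lemma \ref{basiclem1}. A distinguished algebraic set is a finite union of distinguished varieties, and the construction below produces a measure on each piece that can be summed at the end, so the essential case is that of a single one-dimensional distinguished variety, which is an irreducible algebraic curve. Let $\widetilde{\Lambda}$ be the Zariski closure of $\Lambda$ in the projective space $\mathbb P^n_{\mathbb C}$; this is a projective algebraic curve. Let $\pi : S \to \widetilde{\Lambda}$ be its normalization, so that $S$ is a compact Riemann surface and $\pi$ is holomorphic and biholomorphic off the finite set consisting of the singular points of $\widetilde{\Lambda}$ together with the points at infinity. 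Put $G = \pi^{-1}(\Lambda)$, an open subset of $S$. Since $\overline{\Lambda} = W\cap\Gamma_n$ lies in the bounded affine region, no point at infinity meets $\overline G = \pi^{-1}(\overline{\Lambda})$, and $\partial G$ is the $\pi$-preimage of the topological boundary of $\Lambda$ inside $\widetilde\Lambda$.

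The crux of the argument, and the step I expect to be the main obstacle, is to verify that $G$ satisfies the hypotheses of Lemma \ref{basiclem1}: that the boundary of each connected component of $G$ is a finite union of piecewise smooth Jordan curves. Here I would use the distinguished property of $\Lambda$ in an essential way. Because $\Lambda$ exits $\mathbb G_n$ through the distinguished boundary, one has $\overline{\Lambda}\cap \partial \Gamma_n = \overline{\Lambda}\cap b\Gamma_n$, and $b\Gamma_n$ is cut out by the real-analytic relations of Theorem \ref{thm:DB} (it is the symmetrization of the torus $\mathbb T^n$). Consequently $\overline{\Lambda}\cap b\Gamma_n$ is a compact real one-dimensional real-analytic subset of the curve, and its preimage under the normalization map $\pi$ is a finite union of piecewise smooth closed curves bounding the relatively compact region $\overline G$ in $S$. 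Finiteness follows from compactness of $S$ together with the algebraicity of $\widetilde{\Lambda}$, and the Jordan property follows because these curves form the topological boundary of the subsurface-with-boundary $\overline G$.

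With this in hand, applying Lemma \ref{basiclem1} to each connected component of $G$ and summing the resulting measures furnishes a finite measure $\nu$ on $\partial G$ such that every point of $G$ is a bounded point evaluation for $\mathcal A^2(\nu)$, with evaluation functionals $\{k_w^\nu : w\in G\}$ whose linear span is dense in $\mathcal A^2(\nu)$. I then define $\mu = \pi_* \nu$, the push-forward measure on $\partial \Lambda$. For any polynomial $p$,
\[
\int_{\partial \Lambda}|p|^2\, d\mu = \int_{\partial G}|p\circ\pi|^2\, d\nu ,
\]
and since $p\circ\pi \in A(G)$, the assignment $U : p \mapsto p\circ\pi$ extends to an isometry $U : H^2(\mu) \to \mathcal A^2(\nu)$ onto a closed subspace $N := U\big(H^2(\mu)\big)$. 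Given $\lambda \in \Lambda$, choose any $w \in \pi^{-1}(\lambda)\cap G$; then $(Uf)(w) = f(\lambda)$ for every $f$ in the dense set of (restrictions of) polynomials, whence
\[
|f(\lambda)| = |\langle Uf, k_w^\nu\rangle| \le \|k_w^\nu\|\,\|f\|_{H^2(\mu)},
\]
so $\lambda$ is a bounded point evaluation for $H^2(\mu)$ with kernel $k_\lambda^\mu = U^* P_N k_w^\nu$, where $P_N$ is the orthogonal projection of $\mathcal A^2(\nu)$ onto $N$.

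Finally, under $U$ the evaluation functional at $\lambda$ corresponds to $P_N k_w^\nu$, so that
\[
U\big(\overline{\operatorname{span}}\{k_\lambda^\mu : \lambda\in\Lambda\}\big) = \overline{\operatorname{span}}\{P_N k_w^\nu : w\in G\} = P_N\,\overline{\operatorname{span}}\{k_w^\nu : w\in G\}.
\]
Because the orthogonal projection of a dense subset of $\mathcal A^2(\nu)$ onto the closed subspace $N$ remains dense in $N$, and $\overline{\operatorname{span}}\{k_w^\nu\}$ is dense in $\mathcal A^2(\nu)$ by Lemma \ref{basiclem1}, the right-hand side equals $N \cong H^2(\mu)$. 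Hence the span of the bounded evaluation functionals is dense in $H^2(\mu)$. Summing over the finitely many distinguished varieties comprising $\Lambda$ then yields the measure asserted in the statement, completing the proof.
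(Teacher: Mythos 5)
Your proposal is correct and follows essentially the same route as the paper: pass to the projective closure of the curve, desingularize (normalize) it to obtain a domain $G$ on a compact Riemann surface, apply Lemma \ref{basiclem1} there, and push the resulting measure forward to $\partial \Lambda$. The differences are only in presentation—your explicit isometry $U$ and the projection argument make precise the transfer of bounded point evaluations and of density that the paper handles tersely via representing measures $k_\eta\nu$ and the identification $k_\eta\circ\tau^{-1}$.
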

\begin{proof}
Agler and M$^{\textup{c}}$Carthy proved a similar result for
distinguished varieties in the bidisc (see Lemma 1.2 in
\cite{AM05}); we apply similar technique here to establish the
result for the symmetrized polydisc. Let $f_1,\dots , f_{n-1}$ be
the linearly independent minimal polynomials such that
\[
\Lambda=\{(s_1,\dots,s_{n-1},p)\in \mathbb G_n\,:\,
f_i(s_1,\dots,s_{n-1},p)=0\;,\; i=1,\dots,n-1\}.
\]
Let $\mathbb Z_{f}$ be the intersection of the zero sets of
$f_1,\dots,f_{n-1}$, i.e, $\mathbb Z_{f}=\mathbb Z_{f_1} \cap
\dots \cap \mathbb Z_{f_{n-1}}$. Let $C(\mathbb Z_{f})$ be the
closure of $\mathbb Z_{f}$ in the projective space
$\mathbb{CP}^n$. Let $S$ be the desingularization of $C(\mathbb
Z_{f})$. See, e.g., \cite{fischer}, \cite{harris} and
\cite{griffiths} for details of desingularization. Therefore, $S$
is a compact Riemann surface and there is a holomorphic map $\tau:
S \rightarrow C(\mathbb Z_{f})$ that is biholomorphic from
$S^{\prime}$ onto $C(\mathbb Z_{f})^{\prime}$ and finite-to-one
from $S\setminus S^{\prime}$ onto $C(\mathbb Z_{f})\setminus
C(\mathbb Z_{f})^{\prime}$. Here $C(\mathbb Z_{f})^{\prime}$ is
the set of non-singular points in $C(\mathbb Z_{f})$ and
$S^{\prime}$ is the pre-image of $C(\mathbb Z_{f})^{\prime}$ under
$\tau$.

Let $G=\tau^{-1}(\Lambda)$. Then $\partial G$ is a finite union of
disjoint curves, each of which is analytic except possibly at a
finite number of cusps and $G$ satisfies the conditions of Lemma
\ref{basiclem1}. So there exists a measure $\nu$ on $\partial G$
such that every point in $G$ is a bounded point evaluation for
$\mathcal A^2(\nu)$. Let us define our desired measure $\mu$ by
\[
\mu(E)=\nu(\tau^{-1}(E)), \text{ for a Borel subset } E \text{ of
} \partial \Lambda.
\]
Clearly, if $\lambda$ is in $G$ and $\tau(\eta)=\lambda$, let
$k_{\eta}\nu$ be a representing measure for $\eta$ in $A(G)$. Then
the function $k_{\eta}\circ \tau^{-1}$ is defined $\mu$-almost
everywhere and satisfies
\begin{gather*}
\int_{\partial \Lambda}f(k_{\eta}\circ \tau^{-1})d\mu=
\int_{\partial G}(f\circ \tau)k_{\eta}d\nu =f\circ \tau
(\eta)=f(\lambda) \text{ and}\\
\int_{\partial \Omega}g(k_{\eta}\circ \tau^{-1})d\mu=
\int_{\partial G}(g\circ \tau)k_{\eta}d\nu =g\circ \tau
(\eta)=g(\lambda).
\end{gather*}

\end{proof}

\begin{lem}\label{lemeval}
Let $\Lambda$ be a one-dimensional distinguished algebraic set in
$\mathbb G_n$, and let $\mu$ be the measure on $\partial \Lambda$
given as in Lemma \textup{\ref{basiclem2}}. A point
$(y_1,\dots,y_n) \in \mathbb G_n$ is in $\Lambda$ if and only if
$(\bar y_1, \dots, \bar y_n)$ is a joint eigenvalue for
$M_{s_1}^*,\dots, M_{s_{n-1}}^*$ and $M_{p}^*$.
\end{lem}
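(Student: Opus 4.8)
The plan is to establish the two implications separately, using the reproducing kernel furnished by Lemma \ref{basiclem2} together with the polynomial functional calculus for the commuting tuple $(M_{s_1},\dots,M_{s_{n-1}},M_p)$ of multiplications by the coordinate functions on $H^2(\mu)$.

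For the forward implication I would take $\lambda=(y_1,\dots,y_n)\in\Lambda$ and let $k_\lambda$ be its evaluation functional, so that $\langle g,k_\lambda\rangle=g(\lambda)$ for every polynomial $g$. For each coordinate multiplier one computes
\[
\langle g, M_{s_i}^* k_\lambda\rangle=\langle s_i g, k_\lambda\rangle = s_i(\lambda)\,g(\lambda)=y_i\langle g,k_\lambda\rangle=\langle g,\bar y_i k_\lambda\rangle,
\]
and similarly $\langle g,M_p^*k_\lambda\rangle=\langle g,\bar y_n k_\lambda\rangle$. Since the polynomials are dense in $H^2(\mu)$, this yields $M_{s_i}^*k_\lambda=\bar y_i k_\lambda$ and $M_p^*k_\lambda=\bar y_n k_\lambda$. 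As $\langle 1,k_\lambda\rangle=1$ forces $k_\lambda\neq 0$, the functional $k_\lambda$ is a genuine joint eigenvector and $(\bar y_1,\dots,\bar y_n)$ is a joint eigenvalue.

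For the converse I would fix the defining polynomials $f_1,\dots,f_{n-1}$ of $\Lambda$ from Lemma \ref{basiclem2} and suppose $v\neq 0$ satisfies $M_{s_i}^*v=\bar y_i v$ and $M_p^*v=\bar y_n v$ for a point $(y_1,\dots,y_n)\in\mathbb G_n$. The crucial point is that each $f_j$ vanishes on $\Lambda$, hence on $\partial\Lambda$ (the support of $\mu$) by continuity, so that $f_j=0$ as an element of $L^2(\partial\Lambda,\mu)$ and consequently $M_{f_j}=f_j(M_{s_1},\dots,M_{s_{n-1}},M_p)=0$ on $H^2(\mu)$. Because the multipliers commute, taking adjoints and evaluating on the joint eigenvector gives
\[
0=M_{f_j}^* v=\overline{f_j(y_1,\dots,y_n)}\,v,
\]
and $v\neq 0$ forces $f_j(y_1,\dots,y_n)=0$ for every $j$. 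Thus $(y_1,\dots,y_n)$ lies in the common zero set of $f_1,\dots,f_{n-1}$, and combined with the standing hypothesis $(y_1,\dots,y_n)\in\mathbb G_n$ this places the point in $\Lambda$.

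I expect the only genuinely delicate step to be the claim $M_{f_j}=0$: one must use that $\mu$ is carried by $\partial\Lambda\subseteq\overline\Lambda$ and that the polynomial $f_j$, vanishing on $\Lambda$, extends continuously to vanish on $\overline\Lambda$, so that multiplication by $f_j$ is identically zero on the polynomial closure $H^2(\mu)$. Everything else is a routine manipulation of reproducing kernels and the observation that, for commuting multipliers, $f_j(M_{s_1},\dots,M_p)^*$ acts on the joint eigenvector $v$ as the scalar $\overline{f_j(y_1,\dots,y_n)}$.
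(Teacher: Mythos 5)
Your proof is correct, and your forward direction is exactly the paper's: kernel functions are joint eigenvectors of the adjoint multipliers. In the converse, however, you take a different and in fact more self-contained route. The paper argues via a von Neumann-type bound: with $v$ a unit joint eigenvector, $f(y_1,\dots,y_n)=\langle v,M_f^*v\rangle$ for \emph{every} polynomial $f$, hence $|f(y_1,\dots,y_n)|\le\|M_f\|=\sup_{\Lambda}|f|$, and it concludes that $(y_1,\dots,y_n)$ lies in the polynomial convex hull of $\Lambda$ relative to $\mathbb G_n$, ``which is $\Lambda$'' --- a claim not justified at that point in the text (polynomial convexity of $\overline\Lambda$ is Proposition \ref{prop:poly-convex}, proved later, and it depends on Theorem \ref{thm:DVchar}, hence ultimately on this lemma). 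Your argument effectively closes that gap: you apply the eigenvalue relation directly to the defining polynomials $f_j$ of $\Lambda$, observing that each $f_j$ vanishes on $\partial\Lambda=\operatorname{supp}\mu$, so that $M_{f_j}=f_j(M_{s_1},\dots,M_{s_{n-1}},M_p)=0$ on $H^2(\mu)$, whence $\overline{f_j(y_1,\dots,y_n)}\,v=M_{f_j}^*v=0$ and $f_j(y_1,\dots,y_n)=0$; since $\Lambda=Z(f_1,\dots,f_{n-1})\cap\mathbb G_n$ in the setup of Lemma \ref{basiclem2}, the point lies in $\Lambda$. Both converses rest on the same two pillars (the adjoint-eigenvector identity and the fact that $\mu$ is carried by $\overline\Lambda$), and the honest proof of the paper's hull assertion is precisely your computation specialized to the $f_j$: what the paper's phrasing buys is the stronger, reusable statement $|f(y)|\le\sup_\Lambda|f|$ for all polynomials, while what yours buys is freedom from quoting relative polynomial convexity as a black box, which is preferable given the logical ordering of the paper.
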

\begin{proof}
It is a well known fact in the theory of reproducing kernel
Hilbert spaces that $M_f^* k_x = \overline{f(x)} k_x$ for every
multiplier $f$ and every kernel function $k_x$; in particular
every point $(\bar y_1, \dots, \bar y_n)$ in $\Lambda$ is a joint
eigenvalue for $M_{s_1}^*,\dots, M_{s_{n-1}}^*$ and $M_{p}^*$.

Conversely, if $(\bar y_1, \dots, \bar y_n)$ is a joint eigenvalue
and $v$ is a unit eigenvector, then $f(y_1,\dots,y_n) = \langle v,
M_f^* v\rangle$ for every polynomial $f$. Therefore,
\[
|f(y_1,\dots,y_n)| \leq \|M_f\| = \sup_{(s_1,\dots,s_{n-1},p) \in
\Lambda} |f(s_1,\dots,s_{n-1},p)|.
\]
So $(y_1,\dots,y_n)$ is in the polynomial convex hull of $\Lambda$
(relative to $\mathbb G_n$), which is $\Lambda$.
\end{proof}

\begin{lem}\label{lempure}
Let $\Lambda$ be a one-dimensional distinguished algebraic set in
$\mathbb G_n$, and let $\mu$ be the measure on $\partial \Lambda$
given as in Lemma \textup{\ref{basiclem2}}. The multiplication
operator tuple $(M_{s_1},\dots, M_{s_{n-1}},M_{p})$ on $H^2(\mu)$,
defined as multiplication by the co-ordinate functions, is a pure
$\Gamma_n$-isometry.
\end{lem}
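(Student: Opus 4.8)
The plan is to show that the multiplication tuple $(M_{s_1},\dots,M_{s_{n-1}},M_p)$ on $H^2(\mu)$ is a $\Gamma_n$-isometry and that $M_p$ is a pure isometry, after which Theorem~\ref{G-isometry} (equivalence of a $\Gamma_n$-isometry with a $\Gamma_n$-contraction whose last coordinate is an isometry) together with the definition of purity delivers the conclusion. By Theorem~\ref{G-isometry}, it suffices to verify two things: first, that $\overline{\mathbb G_n}=\Gamma_n$ is a spectral set for the tuple (so that it is a $\Gamma_n$-contraction), and second, that $M_p$ is a pure isometry on $H^2(\mu)$.

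**First I would** establish that $M_p$ is an isometry. The key input is that $\Lambda$ is a \emph{distinguished} algebraic set, so $\overline{\Lambda}$ exits $\overline{\mathbb G_n}$ precisely through $b\Gamma_n$. By Theorem~\ref{thm:DB}, every point of $b\Gamma_n$ has $|p|=1$, so the coordinate function $p$ is unimodular on $\partial\Lambda$, which is where the measure $\mu$ lives. Hence for any polynomial $f$ we have $\|M_p f\|^2_{L^2(\mu)}=\int_{\partial\Lambda}|p|^2|f|^2\,d\mu=\int_{\partial\Lambda}|f|^2\,d\mu=\|f\|^2$, and since polynomials are dense in $H^2(\mu)$, $M_p$ is an isometry. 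Purity of $M_p$ (that is, ${M_p^*}^k\to 0$ strongly) should follow from the construction of $\mu$ via the desingularization in Lemma~\ref{basiclem2}: because the evaluation functionals span a dense subspace and every point of $\Lambda$ lies in the open region $G$ where $|p|<1$, one checks that $\langle {M_p^*}^k v, {M_p^*}^k v\rangle\to 0$ on the dense span of reproducing kernels, using $M_p^* k_\lambda=\overline{p(\lambda)}\,k_\lambda$ with $|p(\lambda)|<1$ for $\lambda\in\Lambda$.

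**The main step**, and the one I expect to be the chief obstacle, is showing that the tuple is a $\Gamma_n$-contraction, i.e.\ that $\Gamma_n$ is a spectral set. The natural route is to verify the von Neumann inequality $\|f(M_{s_1},\dots,M_{s_{n-1}},M_p)\|\le\|f\|_{\infty,\Gamma_n}$ for polynomials $f$. Since each $M_g$ is multiplication by $g\in\mathcal R(\Gamma_n)$ on the function space $H^2(\mu)$, we have $f(M_{s_1},\dots,M_p)=M_{f(s_1,\dots,s_{n-1},p)}$, and the operator norm of a multiplication operator on $H^2(\mu)$ is bounded by the sup-norm of its symbol over $\partial\Lambda\subseteq\overline{\Lambda}\subseteq\Gamma_n$. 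This gives
\[
\|f(M_{s_1},\dots,M_p)\|\le\sup_{\partial\Lambda}|f(s_1,\dots,s_{n-1},p)|\le\sup_{\Gamma_n}|f|=\|f\|_{\infty,\Gamma_n}.
\]
The delicate points requiring care are the Taylor-spectrum containment $\sigma_T(M_{s_1},\dots,M_p)\subseteq\Gamma_n$ demanded by the definition of spectral set, and the passage from polynomials to all rational functions in $\mathcal R(\Gamma_n)$; the former follows by identifying the Taylor spectrum with joint point-spectrum data as in Lemma~\ref{lemeval} and the boundedness of the evaluations, while the latter is handled by the standard density argument since polynomials are dense in $\mathcal R(\Gamma_n)$ in the relevant norm. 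Once these are in place, commutativity of the $M_g$ is immediate from pointwise multiplication, and the combination of the $\Gamma_n$-contraction property with $M_p$ being a pure isometry completes the proof via Theorem~\ref{G-isometry}.
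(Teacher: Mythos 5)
Your proposal takes a genuinely different route from the paper, and in outline it works, but one step needs repair. The paper's proof is shorter and more structural: it passes to the larger space $L^2(\partial\Lambda,\mu)$, observes that the multiplication tuple $(\widehat{M_{s_1}},\dots,\widehat{M_{s_{n-1}}},\widehat{M_{p}})$ there consists of commuting normal operators whose Taylor joint spectrum lies in $\partial\Lambda\subseteq b\Gamma_n$ (for normal multiplication operators the joint spectrum is essentially the support of the measure), so that tuple is a $\Gamma_n$-unitary by definition; then $(M_{s_1},\dots,M_{s_{n-1}},M_p)$ on $H^2(\mu)$ is a $\Gamma_n$-isometry simply because the definition of $\Gamma_n$-isometry is ``restriction of a $\Gamma_n$-unitary to a joint invariant subspace.'' No spectral-set verification is needed at all. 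You instead verify condition (4) of Theorem \ref{G-isometry} ($\Gamma_n$-contraction plus $M_p$ an isometry), which forces you to establish the spectral-set condition by hand; that is precisely where your argument carries its technical burden, while your purity argument (kernel functions are eigenvectors of $M_p^*$ with eigenvalues of modulus $<1$, by Theorem \ref{char-G}, and their span is dense) is exactly the paper's.

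The soft spot is your treatment of the Taylor-spectrum containment $\sigma_T(M_{s_1},\dots,M_p)\subseteq\Gamma_n$: you propose to get it ``by identifying the Taylor spectrum with joint point-spectrum data as in Lemma \ref{lemeval}.'' That identification is false in infinite dimensions -- Lemma \ref{lemeval} only describes the joint eigenvalues of the \emph{adjoint} tuple, and these do not exhaust the Taylor spectrum (for $T_z$ on $H^2(\mathbb D)$ the spectrum is $\overline{\mathbb D}$ while the eigenvalue data of $T_z^*$ only sees $\mathbb D$). The correct repair exists in the paper itself: Lemma \ref{poly-convex} shows that for a polynomially convex set $X$ -- and $\Gamma_n$ is polynomially convex -- the polynomial von Neumann inequality alone already implies the full spectral-set condition, Taylor-spectrum containment included (its proof runs by contradiction via polynomial convexity and the spectral mapping theorem), and the same lemma disposes of your other worry, the passage from polynomials to rational functions, via Oka--Weil. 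With that substitution your proof closes: your multiplier-norm estimate $\|f(M_{s_1},\dots,M_p)\|\le\sup_{\partial\Lambda}|f|\le\|f\|_{\infty,\Gamma_n}$ is sound because $f(M_{s_1},\dots,M_p)=M_{f}$ preserves $H^2(\mu)\subseteq L^2(\partial\Lambda,\mu)$, and your isometry argument for $M_p$ is correct since $|p|=1$ on $\partial\Lambda\subseteq b\Gamma_n$ by Theorem \ref{thm:DB}.
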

\begin{proof}
Let us consider the tuple of operators
$(\widehat{M_{s_1}},\dots,\widehat{M_{s_{n-1}}},\widehat{M_{p}})$,
multiplication by co-ordinate functions, on $L^2(\partial \Lambda,
\mu)$. They are commuting normal operators and the Taylor joint
spectrum
\[
\sigma_T(\widehat{M_{s_1}},\dots,\widehat{M_{s_{n-1}}},\widehat{M_{p}}) \subseteq \partial \Lambda \subseteq b\Gamma_n.
\]
Therefore,
$(\widehat{M_{s_1}},\dots,\widehat{M_{s_{n-1}}},\widehat{M_{p}})$
is a $\Gamma_n$-unitary and $(M_{s_1},\dots,M_{s_{n-1}},M_{p})$,
being the restriction of
$(\widehat{M_{s_1}},\dots,\widehat{M_{s_{n-1}}},\widehat{M_{p}})$
to the common invariant subspace $H^2(\mu)$, is a
$\Gamma_n$-isometry. By a standard computation, for every
$\overline y=(y_1,\dots,y_n) \in \Lambda$, the kernel function
$k_{\bar y}$ is an eigenfunction of $M_{p}^*$ corresponding to the
eigenvalue $\overline{y_n}$. Therefore,
\[
(M_{p}^*)^ik_{\overline y}=\overline{ y_n}^ik_{\overline y}
\rightarrow 0 \; \textup{ as } i \rightarrow \infty,
\]
because $|y_n|<1$ by Theorem \ref{char-G}. Since the evaluation
functionals $k_{\overline y}$ are dense in $H^2(\mu)$, this shows
that $M_{p}$ is pure. Hence $M_{p}$ is a pure isometry and
consequently $(M_{s_1},\dots, M_{s_{n-1}},M_{p})$ is a pure
$\Gamma_n$-isometry on $H^2(\mu)$.
\end{proof}

The following result will be used in the main results of this paper. Its proof is a routine exercise though we provide a short proof here for the sake of completeness.
 
\begin{lem}\label{poly-convex}

If $X\subseteq \mathbb C^n$ is a polynomially convex set, then $X$
is a spectral set for a commuting tuple $(T_1,\dots,T_n)$ if and
only if

\begin{equation}\label{pT}
\|f(T_1,\dots,T_n)\|\leq \| f \|_{\infty,\, X}\,
\end{equation}
for all holomorphic polynomials $f$ in $n$-variables.

\end{lem}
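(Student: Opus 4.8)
The forward implication is immediate: if $X$ is a spectral set for $(T_1,\dots,T_n)$, then $\|f(T_1,\dots,T_n)\|\le\|f\|_{\infty,X}$ holds for every $f\in\mathcal R(X)$ and in particular for every holomorphic polynomial, which is \eqref{pT}. So the content lies in the converse, and I would assume \eqref{pT} throughout. To conclude that the compact polynomially convex set $X$ is a spectral set I must establish two things: first, that $\sigma_T(T_1,\dots,T_n)\subseteq X$; and second, that the estimate $\|f(T_1,\dots,T_n)\|\le\|f\|_{\infty,X}$ upgrades from polynomials to all of $\mathcal R(X)$.

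For the spectral containment I would use the spectral mapping theorem for the Taylor joint spectrum. Fix $\lambda\in\sigma_T(T_1,\dots,T_n)$; then for every polynomial $p$ one has $p(\lambda)\in p\big(\sigma_T(T_1,\dots,T_n)\big)=\sigma\big(p(T_1,\dots,T_n)\big)$, whence $|p(\lambda)|\le\|p(T_1,\dots,T_n)\|\le\|p\|_{\infty,X}$ by \eqref{pT}. Since this holds for every polynomial, $\lambda$ lies in the polynomially convex hull of $X$; as $X$ is polynomially convex, that hull equals $X$, so $\lambda\in X$. This yields $\sigma_T(T_1,\dots,T_n)\subseteq X$.

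The substantive step is extending \eqref{pT} to rational functions. Fix $f=p/q\in\mathcal R(X)$, so $q$ is zero-free on $X\supseteq\sigma_T(T_1,\dots,T_n)$; hence $0\notin\sigma\big(q(T_1,\dots,T_n)\big)$ and $f(T_1,\dots,T_n)=p(T_1,\dots,T_n)q(T_1,\dots,T_n)^{-1}$ is well defined. The plan is to approximate $f$ by polynomials. Since $f$ is holomorphic on a neighborhood of $X$ and $X$ is compact and polynomially convex, I would first choose a polynomially convex compact neighborhood $K$ of $X$ on which $f$ is still holomorphic (a polynomial polyhedron squeezed between $X$ and the domain of holomorphy of $f$), and then apply the Oka--Weil theorem on $K$ to obtain polynomials $p_k\to f$ uniformly on $K$. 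Uniform convergence on $X\subseteq K$ gives $\|p_k\|_{\infty,X}\to\|f\|_{\infty,X}$, and uniform convergence on the neighborhood $K$ of $\sigma_T(T_1,\dots,T_n)$ feeds into the Taylor functional calculus to give $p_k(T_1,\dots,T_n)\to f(T_1,\dots,T_n)$ in operator norm. Passing to the limit in $\|p_k(T_1,\dots,T_n)\|\le\|p_k\|_{\infty,X}$ then produces $\|f(T_1,\dots,T_n)\|\le\|f\|_{\infty,X}$, as required.

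The hard part will be this last passage to the limit, which rests on two points I would isolate and justify separately: that a compact polynomially convex set has a neighborhood basis of polynomially convex compacta (so that Oka--Weil approximation can be arranged on a genuine neighborhood of the spectrum rather than merely on $X$), and that the Taylor functional calculus is continuous for uniform convergence of holomorphic functions on a fixed neighborhood of $\sigma_T(T_1,\dots,T_n)$, which one reads off from the Cauchy-type integral formula defining the calculus. With these in hand the remaining estimates are routine.
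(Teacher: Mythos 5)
Your proposal is correct and follows essentially the same route as the paper: the spectral inclusion $\sigma_T(T_1,\dots,T_n)\subseteq X$ via the spectral mapping theorem together with polynomial convexity (the paper phrases this as a contradiction, you argue directly), and the upgrade from polynomials to all of $\mathcal R(X)$ via Oka--Weil approximation combined with continuity of the Taylor functional calculus, for which the paper cites Theorem 9.9 of Chapter III of Vasilescu's book. Your extra care in choosing a polynomially convex compact neighborhood (a polynomial polyhedron) on which to run Oka--Weil, so that the approximating polynomials converge on a genuine neighborhood of the spectrum, is a detail the paper leaves implicit in that citation.
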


\begin{proof}

If $X$ is a spectral set for $(T_1,\dots, T_n)$, then
$\|f(T_1,\dots,T_n)\|\leq \| f \|_{\infty, X}$ follows from
definition.\\
Conversely, if the Taylor joint spectrum $\sigma_T(T_1,\dots,T_n)$
is not contained in $X$, then there is a point $(\alpha_1,\dots,
\alpha_n)$ in $\sigma_T(T_1,\dots,T_n)$ that is not in $X$. By
polynomial convexity of $X$, there is a polynomial $g$ in
$n$-variables such that $ | g(\alpha_1, \dots, \alpha_n) |
> \| g \|_{\infty, X}$. By spectral mapping
theorem,
$$ \sigma_T (g(T_1,\dots,T_n)) = \{ g(z_1,\dots, z_n ) : (z_1,\dots, z_n )
\in \sigma_T (T_1,\dots,T_n) \}$$ and hence the spectral radius of
$g(T_1,\dots, T_n)$ is bigger than $\| g \|_{\infty, X}$. But then we have that
$ \| g(T_1,\dots,T_n )\| > \| g \|_{\infty, X}$, which contradicts the
fact that $X$ is a spectral set for $(T_1,\dots,T_n)$. By polynomial convexity of $X$, a tuple satisfying (\ref{pT}) will
also satisfy
$$ \|f(T_1,\dots,T_n)\|\leq \| f \|_{\infty, X} $$
for any function holomorphic in a neighbourhood of $X$. Indeed,
Oka-Weil theorem (Theorem 5.1 in \cite{Gamelin}) allows us to
approximate $f$ uniformly by polynomials. The rest of the proof
follows by an application of Theorem 9.9 of Chapter III of
\cite{vasilescu} which deals with functional calculus in several
commuting operators.

\end{proof}

\noindent The following useful result is intuitive and comes up with a straight-forward proof.

\begin{lem}\label{lem:charming}

Let $\varphi_1, \dots,\varphi_n$ be functions in
$H^{\infty}(\mathcal B(E))$ for some Hilbert space $E$ and let
$T_{\varphi_1},\dots,T_{\varphi_n}$ be the Toeplitz
operators on $H^2(E)$. Then $(T_{\varphi_1}, \dots,
T_{\varphi_n})$ is a $\Gamma_n$-contraction if and only if
$(\varphi_1,\dots,\varphi_n)$ is a $\Gamma_n$-contraction.

\end{lem}

\begin{proof}
It is obvious that $\|T_{\varphi}\|=\| \varphi \|_{\infty}$ for
any $\varphi \in H^{\infty}(\mathcal B(E))$. For any polynomial
$p(z_1,\dots,z_n)$,
\[
p(T_{\varphi_1},\dots,T_{\varphi_n})=T_{p(\varphi_1,\dots,\varphi_n)}.
\]
Thus, by an application of Lemma \ref{poly-convex}, it suffices to have von-Neumann's inequality on $\Gamma_n$ only for the polynomials. Needless to mention that if von-Neumann's inequality holds for the tuple $(T_{\varphi_1}, \dots, T_{\varphi_n})$, then so it does for the tuple $(\varphi_1,\dots,\varphi_n)$ and vice versa.

\end{proof}

\noindent We now present one of the main results of this article, a representation of a distinguished variety in $\mathbb G_n$ in terms of the natural coordinates of $\mathbb G_n$.

\begin{thm}\label{thm:DVchar}
Let
\begin{align}\label{eq:W}
\Lambda = & \{ (s_1,\dots,s_{n-1},p)\in \mathbb G_n \,: \nonumber
\\& \;\; (s_1,\dots,s_{n-1}) \in \sigma_T(F_1^*+pF_{n-1}\,,\,
F_2^*+pF_{n-2}\,,\,\dots\,, F_{n-1}^*+pF_1) \},
\end{align}
where $F_1,\dots,F_{n-1}$ are complex square matrices of same order satisfying the following conditions:
\begin{itemize}
\item[(i)] $[F_i,F_j]=0$ and $[F_i^*,F_{n-j}]=[F_j^*,F_{n-i}]$, for $1\leq i<j\leq
n-1$ ; \item[(ii)] for every $p\in \D$ and for any unit joint-eigenvector $h$ of $(F_1^*+pF_{n-1}, F_2^*+pF_{n-2}, \dots , F_{n-1}^*+pF_1)$, the tuple $(\la F_1^*h,h \ra, \dots , \la F_{n-1}^*h,h \ra) \in \mathbb G_{n-1}$ ;
\item[(iii)] the polynomials $\{ f_1,\dots ,f_{n-1} \}$, where $f_i=\det\,(F_i^*+pF_{n-i}-s_iI)$ for each $i$, form a regular sequence ;
\item[(iv)] the complex algebraic set generated by the polynomials $S=\{ f_1,\dots, f_{n-1} \}$ is irreducible.
\end{itemize}
Then, $\Lambda$ is a distinguished variety in $\mathbb G_n$. Furthermore, $\Lambda$ is a part of an affine algebraic curve which is a set-theoretic complete intersection.

Conversely, every distinguished variety $\Lambda$ in $\mathbb G_n$ is a part of an affine algebraic curve lying in $\gn$ which is a complete intersection and has representation as in $($\ref{eq:W}$)$, where $F_1,\dots, F_{n-1}$ are complex square matrices of same order satisfying the above conditions ${(i)-(iv)}$.

\end{thm}

\begin{proof}
Suppose that
\begingroup
\begin{align*}
\Lambda  = &\{ (s_1,\dots,s_{n-1},p)\in \mathbb G_n \,: \nonumber
\\& \;\; (s_1,\dots,s_{n-1}) \in \sigma_T(F_1^*+pF_{n-1}\,,\,
F_2^*+pF_{n-2}\,,\,\dots\,, F_{n-1}^*+pF_1) \},
\end{align*}
\endgroup
where $F_1,\dots,F_{n-1}$ are complex square matrices of same
order defined on a finite dimensional Hilbert space $E$ and satisfying the given conditions. By conditions (iii) and (iv), $V_S$ is a complex algebraic variety. For any $i,j\in \{ 1,\dots,n-1 \}$, $F_i^*+pF_{n-i}$ and $F_j^*+pF_{n-j}$ commute by the given
condition-(i) for any $p\in \mathbb C$ and consequently
\[
\sigma_T(F_1^*+pF_{n-1}\,,\, F_2^*+pF_{n-2}\,,\,\dots\,,
F_{n-1}^*+pF_1)\neq \emptyset.
\]
We now show that if $|p|<1$ and if
$(s_1,\dots,s_{n-1})\in\sigma_T(F_1^*+pF_{n-1}\,,\,
F_2^*+pF_{n-2}\,,\,\dots\,, F_{n-1}^*+pF_1)$, then
$(s_1,\dots,s_{n-1},p)\in \mathbb G_n$ which will establish that
$\Lambda$ is non-empty and that it exits through the distinguished
boundary $b\Gamma_n$. This is because, proving the fact that
$\Lambda$ exits through $b\Gamma_n$ is same as proving that
$\overline{\Lambda}\cap (\partial \Gamma_n \setminus
b\Gamma_n)=\emptyset$, i.e, if
$(s_1,\dots,s_{n-1},p)\in\overline{\Lambda}$ and $|p|<1$ then
$(s_1,\dots,s_{n-1},p)\in \mathbb G_n$ (by Theorem
\ref{char-G}). Let $|p|<1$ and $(s_1,\dots,s_{n-1})$ be a joint eigenvalue of
$(F_1^*+pF_2, \dots, F_{n-1}^*+pF_1)$. Then there is a unit joint-eigenvector say $x$ such that $(F_i^*+pF_{n-i})x=s_ix$ for each $i=1, \dots , n-1$. Taking inner product with $x$ on both sides, we have for each $i$
\[
\la F_i^*x,x \ra +p \la F_{n-i}x,x \ra =s_i.
\]
Setting $\alpha_i = \la F_i^*x,x \ra$ for each $i$, we have by condition-(ii) that $(\alpha_1, \dots , \alpha_{n-1}) \in \mathbb G_{n-1}$. Thus
\[
(s_1, \dots ,s_{n-1},p)=(\alpha_1+\ov{\alpha}_{n-1}p, \dots , \alpha_{n-1}+\ov{\alpha}_{1}p , p)
\]
with $(\alpha_1, \dots , \alpha_{n-1})\in \mathbb G_{n-1}$. So, it follows from Theorem \ref{char-G} that $(s_1, \dots , s_{n-1}, p) \in \gn$. Thus,
$\Lambda$ is non-empty and it exits through the distinguished
boundary $b\Gamma_n$. Again since by condition-(iii) $\{
f_1,\dots , f_{n-1} \}$ is a regular sequence, it follows that $\Lambda$ is a complete intersection. Since $V_S$ is the affince algebraic variety generated by $f_1,\dots,f_{n-1}$, it follows that $V_S$ has complex dimension $1$. Thus, $\Lambda \;(=V_S \cap \mathbb G_n)$ is a distinguished variety in $\mathbb G_n$ which is also a part of an affine algebraic curve.\\

Conversely, let $\Lambda$ be a distinguished variety in $\mathbb
G_n$ and let $\Lambda=V_S\cap \mathbb G_n$ for a set of
polynomials
$S\subseteq \mathbb C[z_1,\dots,z_n]$. We first show that
$\Lambda$ has complex-dimension $1$. Since $V_S$ is an affine algebraic variety in
$\mathbb C^n$, by a famous theorem due to Eisenbud and Evans (see \cite{Eisenbud}), there exist $n$ polynomials $g_1, \dots , g_n \in \mathbb C[z_1,\dots , z_n]$ generating $V_S$. Therefore, without loss of
generality we can choose $S$ to be the set $\{g_1,\dots,g_n\}$ and thus have
\[
\Lambda =\{ (s_1,\dots,s_{n-1},p)\in \mathbb G_n \,:\,
g_i(s_1,\dots,s_{n-1},p)=0\,,\, i=1,\dots,n \}.
\]
Let if possible $V_S$ has complex-dimension $k$, where $1<k\leq n$. We
show that $\overline{\Lambda}$ has intersection with $\partial
\Gamma_n \setminus b\Gamma_n$, which proves that $\Lambda$ does not
exit through the distinguished boundary. Let
$(t_1,\dots,t_{n-1},q)\in\Lambda$. Therefore, $|q|<1$. Consider
the set
\[
\Lambda_{q}=\{ (s_1,\dots,s_{n-1},p)\in\Lambda\,:\,p=q \}.
\]
Clearly $\Lambda_q$ is nonempty as $(t_1,\dots,t_{n-1},q)\in\Lambda$.
Such $(s_1,\dots,s_{n-1})$ are zeros of the polynomial
$g_i(z_1,\dots,z_{n-1},q)$ for $i=1,\dots,n$. Let
\[
V_q=\{ (z_1,\dots,z_n)\in V_S \;:\; z_n=q \}.
\]
Then $\Lambda_q = V_q  \cap  \mathbb G_n$. If each such $\Lambda_q$, whenever $(t_1, \dots , t_{n-1},q)\in \Lambda$ contains only finitely many points, then by an argument similar to that in Lemma \ref{lem:ag2}, we have that $\Lambda$ has complex dimension $1$, a contradiction as $k>1$. Thus, we choose such $(t_1, \dots , t_{n-1},q)\in \Lambda$ for which $\Lambda_q$ has complex dimension at least $1$. Since $V_S$ has complex
dimension $k$, it follows that $V_q$ has dimension at least
$k-1$ which is greater than or equal to $1$. Again since each
$(s_1,\dots,s_{n-1},q)$ in $\Lambda_{q}$ is a point in $\mathbb
G_n$, by Theorem \ref{char-G}, there exists
$(\beta_1,\dots,\beta_{n-1})\in\mathbb G_{n-1}$ such that
\[
s_i=\beta_i+\bar{\beta}_{n-i}q \;,\; i=1,\dots,n-1\,.
\]
Let us consider the map
\begin{align*}
\Psi\,:\, \mathbb C^{n-1} &\rightarrow \mathbb C^{n-1} \\
(\beta_1,\dots,\beta_{n-1}) &\mapsto
(\beta_1+\bar{\beta}_{n-1}q,\beta_2+\bar{\beta}_{n-2}q,\dots,\beta_{n-1}+\bar{\beta}_{1}q).
\end{align*}
It is evident that the points $(s_1,\dots,s_{n-1})$ for which
$(s_1,\dots,s_{n-1},q)\in \Lambda_{q}$ lie inside $\Psi (\mathbb
G_{n-1})$. Also it is clear that $\Psi$ maps $\mathbb G_{n-1}$
into the following cartesian product of the scaled disks
$\binom{n}{1}\mathbb D \times \binom{n}{2} \mathbb D \times \dots
\times \binom{n}{n-1} \mathbb D$. This map $\Psi$ is real-linear
and invertible when considered a map from $\mathbb R^{2(n-1)}$ to
$\mathbb R^{2(n-1)}$, in fact a homeomorphism of $R^{2(n-1)}$.
Therefore, $\Psi$ is open and it maps the topological boundary
$\partial \Gamma_{n-1}$ of $\mathbb G_{n-1}$ onto the
topological boundary of $\Psi(\mathbb G_{n-1})$. Therefore, $V_q$
is a variety of dimension at least $1$ and a part of $V_q$ lies inside
$\Psi(\mathbb G_{n-1})$. Therefore, $V_q$ intersects the
topological boundary $\partial \overline{\Psi(\mathbb G_{n-1})}$ of the domain $\Psi(\mathbb G_{n-1})$. Since
$\Psi$ is an open map, the topological boundary of $\Psi(\mathbb
G_{n-1})$ is precisely the image of the topological boundary of
$\mathbb G_{n-1}$ under $\Psi$, that is, $\partial \overline{\Psi(\mathbb G_{n-1})}=\Psi (\partial \Gamma_{n-1})$. Take one such point say
$(\lambda_1,\dots,\lambda_{n-1})$ from the intersection of $V_q$
and $\Psi(\partial \Gamma_{n-1})$. Therefore,
\[
\lambda_i=\alpha_i+\bar{\alpha}_{n-i}q \,,\,
 \text{ for some }
(\alpha_1,\dots,\alpha_{n-1})\in \partial \Gamma_{n-1} \,(=\Gamma_{n-1} \setminus \mathbb
G_{n-1}).
\]
Since $(\alpha_1,\dots,\alpha_{n-1})\in\Gamma_{n-1} \setminus
\mathbb G_{n-1}$, by Theorem \ref{char-G},
$(\lambda_1,\dots,\lambda_{n-1},q)\in \Gamma_{n}\setminus \mathbb
G_{n}=\partial \Gamma_{n}$. Thus
$(\lambda_1,\dots,\lambda_{n-1},q)\in \overline{\Lambda}\cap
\partial\Gamma_{n}$. Again since $|q|<1$,
$(\lambda_1,\dots,\lambda_{n-1},q)$ can not lie on the
distinguished boundary $b\Gamma_n$ (by Theorem \ref{thm:DB}) and
hence $(\lambda_1,\dots,\lambda_{n-1},q)\in
\partial \Gamma_n \setminus b\Gamma_n$. Thus, $\Lambda$ does not exit
through the distinguished boundary of $\Gamma_n$ and consequently
$\Lambda$ is not a distinguished variety, a contradiction. So,
there is no distinguished variety in $\mathbb G_n$ having complex
dimension more than $1$. Thus $\Lambda$ is one-dimensional and hence a part of an affine algebraic curve lying in $\mathbb G_n$. We have that
\[
\Lambda=\{ (s_1,\dots, s_{n-1},p)\in \mathbb G_n\,:\, g_i(s_1,\dots , s_{n-1},p)=0\,, i=1,\dots , n \}.
\]
We show that the polynomials $g_1,\dots ,g_n$ do not have a non-constant common factor that intersects $\mathbb G_n$. If $g_1,\dots , g_n$ have a common non-constant factor $g$ that intersects $\mathbb G_n$, then by the argument given above, $g$ will intersect $\partial \Gamma_n \setminus b\Gamma_n$ and consequently that will contradict the fact that $\Lambda$ is a distinguished variety. So, without loss of generality we may assume that $g_1,\dots , g_n$ do not have any non-constant common factor.

Since $V_S$ has complex dimension $1$, if the $n$-th variable $p$ is held fixed, then the intersection of $g_1, \dots , g_n$ is either whole $V_S$ or consists of just finitely many points. If for some particular $p= \tilde{p}$ the intersection of $g_1(s_1,\dots ,s_{n-1}, \tilde{p}),\dots ,g_n(s_1,\dots ,s_{n-1}, \tilde{p})$ is equal to $V_S$, then $|\tilde{p}|<1$, because, $V_S$ intersects $\mathbb G_n$ and for each point $(s_1,\dots, s_{n-1},p)\in \mathbb G_n$, $p$ belongs to $\mathbb D$. This contradicts the fact that $V_S$ exits through the distinguished boundary $b\Gamma_n$ of $\Gamma_n$ as $|p|=1$ for each $(s_1,\dots , s_{n-1},p)\in b\Gamma_n$. Thus for any constant $\tilde p$, the intersection of $\{ g_i(s_1,\dots ,s_{n-1}, \tilde{p} )\,:\,i=1,\dots ,n \}$ contains finitely many, say $k$ number of points. By Lemma \ref{lem:ag1}, there exists a positive integer $\tilde k$ such that $k \leq \widetilde k$. If these $k$ points are
\begin{gather*}
\left( s_{11}(\tilde p), s_{21}(\tilde p),\dots ,s_{n-1\,1}(\tilde p),\tilde p \right), (s_{12}(\tilde p),s_{22}(\tilde p),\\
\dots, s_{n-1\,2}(\tilde p),\tilde p), \dots , (s_{1k}(\tilde p), s_{2k}(\tilde p), \dots ,s_{n-1 \, k}(\tilde p), \tilde p),
\end{gather*}
then the points
\begin{gather*}
(s_{11}(\tilde p), s_{21}(\tilde p),\dots ,s_{n-1\,1}(\tilde p)), (s_{12}(\tilde p),s_{22}(\tilde p),\dots, s_{n-1\,2}(\tilde p)),\\
\dots , (s_{1k}(\tilde p), s_{2k}(\tilde p), \dots ,s_{n-1 \, k}(\tilde p))
\end{gather*}
lies in the intersection of the sets of zeros of the polynomials
\begin{align*}
&(s_1-s_{11}(\tilde p))(s_1-s_{12}(\tilde p))\dots (s_1-s_{1\,k}(\tilde p))\,, \\
& (s_2-s_{21}(\tilde p))(s_2-s_{22}(\tilde p))\dots (s_2-s_{2\,k}(\tilde p))\,,\\
& \vdots \\
& (s_{n-1}-s_{n-1\,1}(\tilde p))(s_{n-1}-s_{n-1\, 2}(\tilde p))\dots (s_{n-1}-s_{n-1\,k}(\tilde p))\,.
\end{align*}
Considering the first polynomial we see that 
\[
(s_1-s_{11}(\tilde p))(s_1-s_{12}(\tilde p)\dots (s_1-s_{1\,k}(\tilde p))=0
\]
and it holds for any $\tilde p \in \mathbb D$. This implies that
\[
s_1^k \in {\text{span}}\,\{ 1,s_1,\dots, s_1^{k-1} \}+{Ran}\,M_p.
\]
Since $k \leq \widetilde k$, it follows that
\[
s_1^{\widetilde k} \in {\text{span}}\,\{ 1,s_1,\dots, s_1^{{\widetilde k}-1} \}+{Ran}\,M_p.
\]
Same argument holds for $s_2,\dots, s_{n-1}$. Therefore,
\begin{equation}\label{eqn:d01}
s_i^{\widetilde k}\in {\text{span}}\,\{ 1,s_i,\dots, s_i^{{\widetilde k}-1} \}+{Ran}\,M_p\,,\; \text{ for all } i=1,\dots, n-1.
\end{equation}
Thus it follows from (\ref{eqn:d01}) that for any positive integers $k_1,\dots , k_{n-1} $,
\[
s_1^{k_1}\dots s_{n-1}^{k_{n-1}}\in \text{span}\, \{ s_1^{i_1}s_2^{i_2}\dots s_{n-1}^{i_{n-1}}\,:\,0 \leq i_1,\dots,i_{n-1} \leq {\widetilde k}-1 \} +{Ran}M_p.
\]
Hence
\[
H^2(\mu) = {\text{span}}\, \{ s_1^{i_1}s_2^{i_2}\dots s_{n-1}^{i_{n-1}}\,:\,0 \leq i_1,\dots,i_{n-1} \leq {\widetilde k}-1 \} +{Ran}M_p\,,
\]
where $\mu$ is the measure as in Lemma \ref{basiclem2}. Now $M_{p}M_{p}^*$ is a
projection onto $Ran\, M_{p}$ and
\begin{equation}\label{eqn:essn}
Ran\,M_{p} \supseteq \{ pf(s_1,\dots,s_{n-1},p):\; f \text{ is a
polynomial in } s_1,\dots,s_{n-1},p \}.
\end{equation}
Therefore, $Ran\,(I-M_{p}M_{p}^*)$, which is equal to $\mathcal D_{M_{P}^*}$, has finite dimension, say $d$. Suppose $(M_{s_1},\dots,M_{s_{n-1}},M_{p})$ on $H^2(\mu)$ is the tuple of multiplication operators with the multipliers being the
co-ordinate functions. Then by Lemma \ref{lempure},
$(M_{s_1},\dots,M_{s_{n-1}},M_{p})$ is a pure $\Gamma_n$-isometry
on $H^2(\mu)$. Since $Ran\,(I-M_{p}M_{p}^*)$ has
finite dimension
$d$, it follows from Theorem \ref{model1} that $(M_{s_1},\dots,M_{s_{n-1}},M_{p})$
can be identified with
$(T_{\varphi_1},\dots,T_{\varphi_{n-1}},T_{z})$ on $H^2(\mathcal
D_{M_{P}^*})$, where $\varphi_i(z)=F_i^*+F_{n-i}z$ for each $i$,
$(F_1,\dots,F_{n-1})$ being the fundamental operator tuple of the adjoint
$(T_{F_1^*+F_{n-1}z}^*,\dots ,T_{F_{n-1}^*+F_1z}^*,T_{z}^*)$.
 By Lemma \ref{lemeval}, a point $(t_1,\dots ,t_{n-1},q)$ is in $\Lambda$ if
 and only if $(\bar t_1, \dots ,\bar t_{n-1}, \bar q)$ is a joint
 eigenvalue of $T_{\varphi_1}^*,\dots,T_{\varphi_{n-1}}^*$ and $T_{z}^*$.
This can happen if and only if $(\bar t_1, \dots ,\bar t_{n-1})$
is a joint eigenvalue of $(\varphi_1(q)^*\,, \dots ,
\varphi_{n-1}(q)^*)$. Therefore,
\[
\Lambda= \{ (s_1,\dots,s_{n-1},p)\in \mathbb G_n \,:  (s_1,\dots,s_{n-1}) \in \sigma_T(F_1^*+pF_{n-1}\,,\,
F_2^*+pF_{n-2}\,,\,\dots\,, F_{n-1}^*+pF_1) \}.
\]
If $S^{\prime}=\{ f_1 ,\dots, f_{n-1} \}$, where $f_i=\det\,(F_i^*+pF_{n-i}-s_iI)$, then $\Lambda \subseteq V_{S^{\prime}}\cap \mathbb G_n$. Since $\Lambda=V_S\cap \mathbb G_n$, this implies that $V_S\subseteq V_{S'}$. Note that for every fixed $p\in \mathbb C$, the intersection of the family $\{ f_1,\dots,f_{n-1} \}$ contains only finitely many points. So by Lemma \ref{lem:ag2}, $V_{S'}$ is an affine algebraic curve and thus has complex dimension $1$. It follows from here that $\{ f_1,\dots,f_{n-1} \}$ forms a regular sequence. Thus by Theorem \ref{thm:Regular}, $V_{S'}$ is a complete intersection. Since $\Lambda \subseteq V_{S'}\cap \mathbb G_n$, it follows that either $\Lambda=V_{S'}\cap \mathbb G_n$ or $\Lambda$ is an irreducible component of $V_{S'}\cap \mathbb G_n$. In any case $\Lambda$ is a complete intersection. Again, since $(T_{\varphi_1},\dots ,T_{\varphi_{n-1}},T_{z})$ on
$H^2(\mathcal D_{M_{P}^*})$ is a pure $\Gamma_n$-isometry, by the
commutativity of $\varphi_1 \,,\, \dots , \varphi_{n-1}$ we have
that $[F_i , F_j]=0$ for each $i,j$ and that
$[F_i^*,F_{n-j}]=[F_j^*,F_{n-i}]$ for $1 \leq i<j \leq n-1$. Also, for each $z \in \T$, $(\varphi_1(z), \dots , \varphi_{n-1}(z), zI)$ is a $\Gamma_n$-contraction, by Lemma \ref{lem:charming}. Consider the holomorphic map $g(z)= (\varphi_1(z), \dots , \varphi_{n-1}(z), zI)$ with $z \in \ov{\D}$. For any holomorphic polynomial $f(s_1, \dots ,s_{n-1},p)$ we have that
\[
\|f(\varphi_1(z), \dots , \varphi_{n-1}(z), zI) \| \leq \|f\|_{\infty, \Gamma_n} \text{ for any } z\in \T
\] 
which further implies by Maximum principle implies that
\[
\|f(\varphi_1(z), \dots , \varphi_{n-1}(z), zI) \| \leq \|f\|_{\infty, \Gamma_n} \text{ for any } z\in \D.
\]
Therefore, $(\varphi_1(z), \dots , \varphi_{n-1}(z), zI)$ is a $\Gamma_n$-contraction for every $z\in \D$ by Lemma \ref{poly-convex} and consequently for every $z\in \D$ we have that
\[
\sigma_T (\varphi_1(z), \dots , \varphi_{n-1}(z), zI)= \sigma_T\,(F_1^*+zF_{n-1}\,,\,F_2^*+zF_{n-2}\,,\,\dots\,, F_{n-1}^*+zF_1\,,\,zI) \subseteq \Gamma_n.
\]
For $|p|<1$, if
\[
(s_1,\dots , s_{n-1},p)\in \sigma_T\,(F_1^*+pF_{n-1}\,,\,F_2^*+pF_{n-2}\,,\,\dots\,, F_{n-1}^*+pF_1,pI)\cap \partial \Gamma_n\,,
\]
then $(s_1,\dots ,s_{n-1},p)\in V_S \cap (\partial \Gamma_n \setminus b\Gamma_n )$ as $|p|<1$. This implies that $\overline{\Lambda}\cap (\partial \Gamma_n \setminus b\Gamma_n ) \neq \emptyset$, a contradiction to the fact that $\Lambda$ exits through the distinguished boundary $b\Gamma_n$. Therefore,
\[
\sigma_T\,(F_1^*+zF_{n-1}\,,\,F_2^*+zF_{n-2}\,,\,\dots\,, F_{n-1}^*+zF_1\,,\,zI) \subseteq \mathbb G_n.
\]
 For $p\in \D$, if $h$ is a unit joint eigenvector of $(F_1^*+pF_{n-1}\,,\,F_2^*+pF_{n-2}\,,\,\dots\,, F_{n-1}^*+pF_1)$ corresponding to the joint eigenvalue $(s_1, \dots, s_{n-1})$, then evidently $h$ is a joint eigenvector of $(F_1^*+pF_{n-1}\,,\,F_2^*+pF_{n-2}\,,\,\dots\,, F_{n-1}^*+pF_1, pI)$ corresponding to the joint eigenvalue $(s_1, \dots, s_{n-1},p)$. Clearly $(s_1, \dots , s_{n-1},p) \in \mathbb G_n$ by the argument given above. Now we have that $(F_i^*+pF_{n-i})h=s_ih$, which gives
\[
\la F_i^*h, h \ra + p \la F_{n-i}h, h \ra = s_i.
\]
Thus, if $\alpha_i= \la F_i^* h, h \ra$, then $s_i=\alpha_i + \ov{\alpha}_{n-i}p$ for each $i=1 , \dots , n-1$. Again since $(s_1, \dots , s_{n-1},p)$ is in $\gn$, it follows that $(\alpha_1, \dots ,\alpha_{n-1})= (\la F_1^*h,h \ra , \la F_2^*h, h \ra, \dots , \la F_{n-1}^*h,h \ra) \in \mathbb G_{n-1}$. The proof is now complete.

\end{proof}

\vspace{0.2cm}

\noindent \textbf{\textit{An alternative characterization for the distinguished varieties.}} Here we have the same representation of distinguished varieties in $\gn$ but under a different necessary and sufficient condition. Indeed, one of the conditions in Theorem \ref{thm:DVchar} is replaced by a new one here.

\begin{thm}\label{thm:DVchar-1}

Let
\begin{align}\label{eq:W1}
\Lambda = & \{ (s_1,\dots,s_{n-1},p)\in \mathbb G_n \,: \nonumber
\\& \;\; (s_1,\dots,s_{n-1}) \in \sigma_T(F_1^*+pF_{n-1}\,,\,
F_2^*+pF_{n-2}\,,\,\dots\,, F_{n-1}^*+pF_1) \},
\end{align}
where $F_1,\dots,F_{n-1}$ are complex square matrices of same order satisfying the following conditions:
\begin{itemize}
\item[(i)] $[F_i,F_j]=0$ and $[F_i^*,F_{n-j}]=[F_j^*,F_{n-i}]$, for $1\leq i<j\leq
n-1$ ; \item[(ii)] for every $z\in \D$ the Taylot joint spectrum of $(F_1^*+zF_{n-1}, F_2^*+zF_{n-2}, \dots , F_{n-1}^*+zF_1, zI)$ is contained in $\gn$ ;
\item[(iii)] the polynomials $\{ f_1,\dots ,f_{n-1} \}$, where $f_i=\det\,(F_i^*+pF_{n-i}-s_iI)$ for each $i$, form a regular sequence ;
\item[(iv)] the complex algebraic set generated by the polynomials $S=\{ f_1,\dots, f_{n-1} \}$ is irreducible.
\end{itemize}
Then, $\Lambda$ is a distinguished variety in $\mathbb G_n$. Furthermore, $\Lambda$ is a part of an affine algebraic curve which is a set-theoretic complete intersection.

Conversely, every distinguished variety $\Lambda$ in $\mathbb G_n$ is a part of an affine algebraic curve lying in $\gn$ which is a complete intersection and has representation as in $($\ref{eq:W}$)$, where $F_1,\dots, F_{n-1}$ are complex square matrices of same order satisfying the above conditions ${(i)-(iv)}$.

\end{thm}

\begin{proof}

Note that only condition-(ii) of Theorem \ref{thm:DVchar} is replaced by a new condition here. Thus, it is enough to prove that condition-(ii) of Theorem \ref{thm:DVchar} is necessary and sufficient for condition-(ii) of the present theorem. Suppose condition-(ii) of Theorem \ref{thm:DVchar} holds. Let $p\in \D$ and let $(s_1, \dots ,s_{n-1},p)$ be a joint-eigenvalue of $(F_1^*+pF_{n-1}, F_2^*+pF_{n-2}, \dots , F_{n-1}^*+pF_1, pI)$. To show that $(s_1, \dots , s_{n-1},p) \in \gn$. Clearly $(s_1, \dots ,s_{n-1})$ is a joint-eigenvalue of $(F_1^*+pF_{n-1}, F_2^*+pF_{n-2}, \dots , F_{n-1}^*+pF_1)$ and if $x$ is a unit join-eigenvector, then we have
\[
(F_i^*+pF_{n-i})x=s_ix \quad \text{ for each } i=1, \dots , n-1.
\]
Taking inner product with $x$ we have
$
\alpha_i+\ov{\alpha}_{n-i}p=s_i$, where $\alpha_i=\la F_i^*x,x \ra $ for each $i$. By condition-(ii) of Theorem \ref{thm:DVchar}, we have that $(\alpha_1, \dots ,\alpha_{n-1}) \in \mathbb G_{n-1}$ and consequently by Theorem \ref{char-G}, $(s_1, \dots ,s_{n-1},p)\in \gn$. The proof of the converse part follows from the last part of the proof of Theorem \ref{thm:DVchar}.

\end{proof}

\noindent \textbf{\textit{A refinement of the previous result for} $\mathbb G_2$:} We now show that the condition-(ii) in both Theorem \ref{thm:DVchar} and Theorem \ref{thm:DVchar-1} provides a refinement of the previous result on distinguished varieties in $\mathbb G_2$. In \cite{pal-shalit}, the author and Shalit had the following representation for the distinguished varieties in the symmetrized bidisc:

\begin{thm}[Theorem 3.5, \cite{pal-shalit}]\label{thm:DVsym}
Let $A$ be a square matrix A with numerical radius $\omega(A)< 1$,
and let $W$ be the subset of $\mathbb G_2$ defined by
\begin{equation*}
W = \{(s,p) \in \mathbb G_2 \,:\, \det(A + pA^* - sI) = 0\}.
\end{equation*}
Then $W$ is a distinguished variety. Conversely, every
distinguished variety in $\mathbb G_2$ has the form $\{(s,p) \in
\mathbb G_2 \,:\, \det(A + pA^* - sI) = 0\}$, for some matrix $A$
with $\omega(A)\leq 1$.
\end{thm}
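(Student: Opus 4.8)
The cleanest route is to read this as the instance $n=2$ of Theorem \ref{thm:DVchar}, where the list of conditions (i)--(iv) collapses to a single numerical-radius bound. For $n=2$ there is exactly one fundamental operator, and setting $A=F_1^*$ one has $F_1^*+pF_1=A+pA^*$, so the Taylor spectrum $\sigma_T(F_1^*+pF_1)$ is just the eigenvalue set of $A+pA^*$, i.e.\ the zero set of $\det(A+pA^*-sI)$; thus the representation in the statement is exactly (\ref{eq:W}) for $n=2$. Condition (i) is vacuous (no pair $1\le i<j\le 1$ exists), while (iii)--(iv) are automatic since a single nonzero nonunit polynomial is a nonzerodivisor in $\mathbb C[s,p]$ and cuts out a curve. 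So the whole content reduces to relating condition (ii) to $\omega(A)$, and I would handle the two directions separately.

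For the forward direction I would argue directly rather than through (ii). Assume $\omega(A)<1$. If $|p|<1$ and $(A+pA^*)v=sv$ for a unit vector $v$, then
\[
s=\langle (A+pA^*)v,v\rangle=\langle Av,v\rangle+p\,\overline{\langle Av,v\rangle}=c+\bar c\,p,
\]
where $c=\langle Av,v\rangle$ obeys $|c|\le\omega(A)<1$, so $c\in\mathbb G_1=\mathbb D$. By Theorem \ref{char-G} (case $n=2$) this places $(s,p)\in\mathbb G_2$. Hence every point of the defining algebraic set $V=\{(s,p):\det(A+pA^*-sI)=0\}$ with $|p|<1$ already lies in the open set $\mathbb G_2$, so $\overline W=V\cap\Gamma_2$ can meet $\partial\Gamma_2$ only where $|p|=1$. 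Since by Theorem \ref{thm:DB} the set $b\Gamma_2$ is precisely the $|p|=1$ slice of $\partial\Gamma_2$, we get $\overline W\cap(\partial\Gamma_2\setminus b\Gamma_2)=\emptyset$; as $V$ is one-dimensional, $W$ exits through the distinguished boundary and is a distinguished set.

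For the converse I would run the converse half of Theorem \ref{thm:DVchar} verbatim at $n=2$. The dimension argument there (slicing at $p=q$ and invoking Theorems \ref{char-G} and \ref{thm:DB}) shows $W$ is one-dimensional; Lemma \ref{basiclem2} produces a measure $\mu$ on $\partial W$, and by Lemma \ref{lempure} the coordinate multiplications $(M_s,M_p)$ on $H^2(\mu)$ form a pure $\Gamma_2$-isometry. Uniform finiteness of the fibers $W\cap\{p=q\}$ from Lemma \ref{lem:ag1} forces $\mathcal D_{M_p^*}$ to be finite-dimensional, so Theorem \ref{model1} identifies $(M_s,M_p)$ with $(T_{F_1^*+F_1z},T_z)$ on $H^2(\mathcal D_{M_p^*})$ for a single matrix $F_1$. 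Lemma \ref{lemeval} then reads $(t,q)\in W$ as the statement that $\bar t$ is an eigenvalue of $(F_1^*+qF_1)^*$, equivalently that $t$ is an eigenvalue of $F_1^*+qF_1$; with $A=F_1^*$ this is exactly $\det(A+pA^*-sI)=0$.

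The step carrying the real weight is extracting the bound $\omega(A)\le1$. Theorem \ref{model1} supplies that $\tfrac{1}{2}(F_1^*+zF_1)$ is a $\Gamma_1$-contraction for $z\in\mathbb T$, i.e.\ a contraction, so $\|F_1^*+zF_1\|\le2$ for all $z\in\mathbb T$. I would finish with the classical numerical-radius identity
\[
\sup_{z\in\mathbb T}\|F_1^*+zF_1\|=2\,\omega(F_1),
\]
which follows by writing $z=e^{i\theta}$, factoring out $e^{i\theta/2}$ to recognize $F_1^*+e^{i\theta}F_1=2\,e^{i\theta/2}\,\mathrm{Re}(e^{i\theta/2}F_1)$, and using $\omega(F_1)=\sup_{w\in\mathbb T}\|\mathrm{Re}(wF_1)\|$. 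This gives $\omega(A)=\omega(F_1)\le1$, completing the converse; the same identity also explains why the forward hypothesis is naturally phrased in terms of $\omega(A)$, tying the two halves together.
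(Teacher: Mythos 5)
Your proof is correct, but there is nothing in this paper to compare it against line by line: Theorem \ref{thm:DVsym} is imported from \cite{pal-shalit} (Theorem 3.5 there) as a known result, and the only material the paper adds is the remark following it, which shows that $\omega(A)<1$ implies $\sigma_T(A^*+zA,zI)\subseteq\mathbb G_2$ for all $z\in\mathbb D$, i.e.\ that the numerical-radius hypothesis subsumes condition (ii) of Theorem \ref{thm:DVchar}. Your reconstruction from the present paper's machinery is legitimate and non-circular, since Theorem \ref{thm:DVchar}, Theorem \ref{model1} and Lemmas \ref{basiclem2}, \ref{lempure}, \ref{lemeval} and \ref{lem:ag1} nowhere rely on Theorem \ref{thm:DVsym}. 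Your forward direction, via $s=\langle Av,v\rangle+p\,\overline{\langle Av,v\rangle}$ together with Theorems \ref{char-G} and \ref{thm:DB}, is a one-stroke version of what the paper's remark accomplishes indirectly (through Theorem 2.12 of \cite{pal-shalit}); note that it in fact verifies condition (ii) of Theorem \ref{thm:DVchar} along the way, since $\sigma_T(A+pA^*,pI)$ is exactly the set of pairs $(s,p)$ with $s$ an eigenvalue of $A+pA^*$. Your converse is the converse half of Theorem \ref{thm:DVchar} specialized to $n=2$ — which is precisely how the present paper views its theorem as a refinement of this one — plus the extraction of $\omega(A)\le 1$ from the bound $\|F_1^*+zF_1\|\le 2$ ($z\in\mathbb T$) supplied by the $\Gamma_1$-contractivity clause of Theorem \ref{model1} (a $\Gamma_1$-contraction is just a contraction, by von Neumann's inequality), using the identity $\sup_{z\in\mathbb T}\|F_1^*+zF_1\|=2\,\omega(F_1)$; both the identity and its use are correct, and this is exactly the right bridge between the model-theoretic norm condition and the numerical-radius formulation. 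The one caveat to record: your forward direction produces a distinguished \emph{set} rather than a variety, since $\det(A+pA^*-sI)$ need not be irreducible; this is not a gap in substance, as the paper itself notes that \cite{pal-shalit} (and hence this statement) uses ``distinguished variety'' for what the present paper calls a distinguished set.
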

First we consider condition-(ii) of Theorem \ref{thm:DVchar-1}. Note that $W$ may or may not be a distinguished variety in
$\mathbb G_2$ if $\omega(A)=1$ as was shown in \cite{pal-shalit}
by examples. In the present paper, we have shown that $W$ is a
distinguished variety in $\mathbb G_2$ if and only if
\begin{equation*}
W = \{(s,p) \in \mathbb G_2 \,:\, \det(A^* + pA - sI) = 0\}\,,
\end{equation*}
for a square matrix $A$ satisfying $\sigma_T(A^*+zA,zI)\subseteq \mathbb G_2$ for all $z\in\mathbb D$. The hypothesis $\omega(A)<1$ is stronger than $\sigma_T(A^*+zA,zI)\subseteq \mathbb G_2$, because, if $A$ is of order $n\times n$ and $\omega(A)\leq 1$, then by Theorem 2.12 in \cite{pal-shalit}, $A$ is the fundamental operator of the $\Gamma_2$-co-isometry $(T_{A^*+Az}^*,T_z^*)$ (see the proof of Theorem 2.12 in \cite{pal-shalit}), where the Toeplitz operators $T_{A^*+Az}^*,T_z^*$ act on $H^2(\mathbb C^n)$. Since $(T_{A^*+Az},T_z)$ is a $\Gamma_2$-contraction, it follows that $\sigma_T(A^*+Az,zI)\subseteq \Gamma_2$. In particular when $\omega(A)<1$, following the proof of Theorem \ref{thm:DVsym} from \cite{pal-shalit} we see that $\sigma_T(A^*+Az,zI)\in\mathbb G_2$ for all $z\in \mathbb D$. Thus, the fact that $\omega(A)<1$ implies that $\sigma_T(A^*+Az,zI)\subseteq \mathbb G_2$ for all $z\in\mathbb D$.

Also, if we consider condition-(ii) of Theorem \ref{thm:DVchar}, we see that if $n=2$, it suffices to have $(\la Ax,x \ra) \in \D$ when $x$ is a unit eigenvector of $A$. This is much weaker than demanding $\omega(A)\leq 1$ i.e. $W(A)\subseteq \ov{\D}$.\\

We conclude this section by proving that the closure of every distinguished variety is polynomially convex.

\begin{prop}\label{prop:poly-convex}
The closure of any distinguished variety in $\mathbb G_n$ is polynomially convex.
\end{prop}

\begin{proof}
Suppose
\begin{align*}
 \Lambda= &\{ (s_1,\dots,s_{n-1},p)\in \mathbb G_n \,: \nonumber
\\& \quad (s_1,\dots,s_{n-1}) \in \sigma_T(F_1^*+pF_{n-1}\,,\,
F_2^*+pF_{n-2}\,,\,\dots\,, F_{n-1}^*+pF_1) \},
\end{align*}
is a distinguished variety in $\mathbb G_n$. Then for any
$(s_1,\dots,s_{n-1},p)\in \Lambda $, we have
\[
\det (F_i^*+pF_{n-i}-s_iI)=0\;,\; i=1,\dots,n-1.
\]
We show that $\overline{\Lambda}$ is polynomially convex.
Evidently $\overline{\Lambda}=\Lambda \cup
\partial \Lambda \subseteq \Gamma_n$, where $\partial \Lambda =
Z(f_1,\dots,f_{n-1})$ $\cap b\Gamma_n$ when $f_i=\det
(F_i^*+pF_{n-i}-s_iI)$ (see Theorem \ref{thm:DVchar}). Let
$z=(z_1,\dots,z_n)\in\mathbb C^n \setminus \Gamma_n$. Since
$\Gamma_n$ is polynomially convex, there is a polynomial say $f$
such that
\[
|f(z)|> \sup_{y\in\Gamma_n} |f(y)|=\| f \|_{\infty , \Gamma_n}.
\]
Therefore, $|f(z)|> \|f\|_{\infty , \overline{\Lambda}}$. Now let
$x=(\hat s_1,\dots,\hat s_{n-1},\hat p)$ be a point in $\Gamma_n \setminus \overline{\Lambda}$. Then $\det (F_i^*+ \hat pF_{n-i}-\hat s_iI) \neq 0$ for some $i$ in $\{ 1,\dots, n-1 \}$. Let $\det(F_k^*+ \hat p F_{n-k}- \hat s_kI)=\alpha \neq 0$. We choose $g=f_k= \det(F_k^*+pF_{n-k}-s_kI)$ and see that
\[
|g(x)|=|\alpha|>0 = \|  g\|_{\infty, \overline{\Lambda}}.
\]
So, we conclude that $\overline{\Lambda}$ is polynomially convex.

\end{proof}


\vspace{0.2cm}

\section{Interplay between the distinguished varieties in $\mathbb G_2$ and $\mathbb G_3$} \label{sec:06}

\vspace{0.3cm}

\noindent In this section, we show by explicit construction how a distinguished variety in $\mathbb G_3$ gives rise to a distinguished variety in $\mathbb G_2$. 

\begin{lem}\label{sc:1}
If $(s_1,s_2,p)\in\mathbb C$ is in $\Gamma_3 \;($or, respectively
in $\mathbb G_3 )$ then $(\dfrac{s_1}{3}+\omega \dfrac{s_2}{3},
\omega p)\in \Gamma_2 \;($or, respectively in $\mathbb G_2 )$ for
every $\omega\in\mathbb T$.
\end{lem}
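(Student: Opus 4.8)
The plan is to apply Costara's point characterization, Theorem \ref{char-G}, twice: once to unpack membership in $\Gamma_3$ (respectively $\mathbb{G}_3$), and once to repackage the resulting data as membership in $\Gamma_2$ (respectively $\mathbb{G}_2$). I would run the $\Gamma$ and $\mathbb{G}$ cases in parallel, only tracking whether the relevant modulus inequalities are non-strict or strict, since the algebra is identical.

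First I would invoke Theorem \ref{char-G} with $n=3$: if $(s_1,s_2,p)\in\Gamma_3$ then $|p|\leq 1$ and there is $(c_1,c_2)\in\Gamma_2$ with $s_1=c_1+\bar{c}_2 p$ and $s_2=c_2+\bar{c}_1 p$. The key algebraic observation is that for $\omega\in\mathbb{T}$ one has $\omega\bar\omega=1$, so
\[
\bar{c}_2+\omega\bar{c}_1=\omega\bar{c}_1+\bar{c}_2=\omega\,\overline{(c_1+\omega c_2)}.
\]
Substituting the expressions for $s_1,s_2$ and collecting terms then gives
\[
\frac{s_1+\omega s_2}{3}=\frac{c_1+\omega c_2}{3}+\omega p\,\overline{\left(\frac{c_1+\omega c_2}{3}\right)}=c+\bar{c}\,(\omega p),
\]
where $c=(c_1+\omega c_2)/3$. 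This is exactly the Costara form for a point of $\Gamma_2$ with fibre coordinate $q=\omega p$, so by the $n=2$ instance of Theorem \ref{char-G} it only remains to verify the two modulus constraints $|\omega p|\leq 1$ and $c\in\Gamma_1=\overline{\mathbb{D}}$ (and their strict versions in the $\mathbb{G}$ case).

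The first constraint is immediate from $|\omega p|=|p|\leq 1$. For the second I would use the defining parametrization of $\Gamma_2$, writing $(c_1,c_2)=(d_1+d_2,\,d_1d_2)$ with $|d_1|,|d_2|\leq 1$, so that
\[
|c_1+\omega c_2|=|d_1+d_2+\omega d_1 d_2|\leq|d_1|+|d_2|+|d_1 d_2|\leq 3,
\]
whence $|c|\leq 1$. The $\mathbb{G}_3\to\mathbb{G}_2$ case is identical with every $\leq$ replaced by $<$: starting from $|p|<1$ and $(c_1,c_2)\in\mathbb{G}_2$ (so $|d_i|<1$), the same chain yields $|c|<1$ and $|\omega p|<1$, giving $c\in\mathbb{G}_1=\mathbb{D}$ and hence the point in $\mathbb{G}_2$.

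No step presents a genuine obstacle; the only point requiring care is the identity $\bar{c}_2+\omega\bar{c}_1=\omega\,\overline{(c_1+\omega c_2)}$, which is precisely where the hypothesis $|\omega|=1$ enters and which makes the conjugate coefficient line up so that the $\Gamma_2$-form emerges. The triangle-inequality bound by $3$ is exactly what the scaling factor $1/3$ absorbs, and is the structural reason why the \emph{scaled} coordinate $\frac{s_1}{3}+\omega\frac{s_2}{3}$, rather than $s_1+\omega s_2$, lands back inside $\Gamma_2$.
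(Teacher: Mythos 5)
Your proof is correct, and it departs from the paper's argument at the ``repacking'' step. Both proofs begin identically: invoke Costara's characterization (Theorem \ref{char-G}) with $n=3$ to write $s_1=c_1+\bar{c}_2p$, $s_2=c_2+\bar{c}_1p$ with $(c_1,c_2)\in\Gamma_2$. But where you certify that $(s_\omega,p_\omega)=(\frac{s_1}{3}+\omega\frac{s_2}{3},\omega p)$ lies in $\Gamma_2$ by exhibiting an explicit Costara witness --- the identity $s_1+\omega s_2=(c_1+\omega c_2)+\overline{(c_1+\omega c_2)}\,(\omega p)$, so that $s_\omega=c+\bar{c}\,p_\omega$ with $c=(c_1+\omega c_2)/3$ and $|c|\le 1$ --- the paper instead verifies the Agler--Young metric criterion from \cite{ay-jot}, namely $(s,p)\in\Gamma_2$ iff $|s|\le 2$ and $|s-\bar{s}p|\le 1-|p|^2$, via the estimate $|s_\omega-\bar{s}_\omega p_\omega|\le\frac{1}{3}\left(|s_1-\bar{s}_2p|+|s_2-\bar{s}_1p|\right)=\frac{1}{3}(|c_1|+|c_2|)(1-|p|^2)\le 1-|p_\omega|^2$, which rests on the same bound $|c_1|+|c_2|\le 3$ that powers your triangle inequality. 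Your route has two small advantages: it needs only one external result (Costara, applied at levels $n=3$ and $n=2$, with no separate check that $|s_\omega|\le 2$), and it never divides by $1-|p|^2$, so the strict ($\mathbb{G}$) and non-strict ($\Gamma$) cases genuinely run in parallel, whereas the paper's explicit formulas $c_1=(s_1-\bar{s}_2p)/(1-|p|^2)$, $c_2=(s_2-\bar{s}_1p)/(1-|p|^2)$ tacitly assume $|p|<1$ (though the underlying identities $s_i-\bar{s}_{3-i}p=c_i(1-|p|^2)$ do hold in general). What the paper's route buys is direct contact with the standard metric description of $\Gamma_2$, which is the characterization most commonly quoted in the symmetrized-bidisc literature.
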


\begin{proof}
We prove for $\Gamma_3$ and $\Gamma_2$ because the proof for
$\mathbb G_3$ and $\mathbb G_2$ is similar. Let
$(s_1,s_2,p)\in\Gamma_3$. Let $s_{\omega}=\dfrac{s_1}{3}+\omega
\dfrac{s_2}{3}$ and $p_{\omega}=\omega p$. We prove that
$(s_{\omega},p_{\omega})\in\Gamma_2$ by using Theorem 1.1 in
\cite{ay-jot} which states that a point $(s,p)\in\mathbb C^2$ is
in $\Gamma_2$ if and only if
\begin{equation}\label{eq:1}
|s|\leq 2 \text{ and } |s-\bar s p|\leq 1-|p|^2\,.
\end{equation}
Now $|s_{\omega}|\leq 2$ is obvious because we have $|s_i|\leq 3$
for $i=1,2$. Also by Theorem \ref{char-G}, there exists
$(c_1,c_2)\in\Gamma_2$ such that
\[
s_1=c_1+\bar{c_2}p \text{ and } s_2=c_2+\bar{c_1}p\,.
\]
It is evident that
\[
c_1=\dfrac{s_1-\bar{s_2}p}{1-|p|^2}\,,\, c_2 =
\dfrac{s_2-\bar{s_1}p}{1-|p|^2} \text{ and that } |c_1|+|c_2|\leq
3.
\]

Now
\begingroup
\begin{align*}
|s_{\omega}-\bar{s_{\omega}}p_{\omega}|=\dfrac{1}{3}|(s_1+\omega
s_2)-(\bar{s_1}+\bar{\omega}\bar{s_2})\omega p)| &\leq
\dfrac{1}{3}(|s_1-\bar{s_2}p|+|s_2-\bar{s_1}p|)\\
& = \dfrac{1}{3}(|c_1|+|c_2|)(1-|p|^2)\\
& = \dfrac{1}{3}(|c_1|+|c_2|)(1-|p_{\omega}|^2) \\
& \leq (1-|p_{\omega}|^2).
\end{align*}
\endgroup
Thus $(s_{\omega},p_{\omega})\in\Gamma_2$.

\end{proof}

The converse of the above result does not hold. Let us choose
$s_1,s_2,p$ in the following way:
\[
p=\dfrac{1}{2}\;,\; s_1=1+ 2\times \dfrac{1}{2}=2\;,\; s_2=
2+\dfrac{1}{2}=\dfrac{5}{2}.
\]
It is evident here that $(c_1,c_2)=(1,2)\notin \Gamma_2$ and by
Theorem \ref{char-G}, $(s_1,s_2,p)\notin \Gamma_3$. If we follow
the same technique as in the proof of the previous lemma, it is
easy to see that $\left( \dfrac{s_1}{3}+\omega
\dfrac{s_2}{3},\omega p \right)$ is in $\Gamma_2$ for every
$\omega\in\mathbb T$.\\

An application of the previous lemma immediately gives the
following operator version of the same result.

\begin{thm}\label{op:1}
If $(S_1,S_2,P)$ is a $\Gamma_3$-contraction then
$\left(\dfrac{S_1}{3}+\omega \dfrac{S_2}{3}, \omega P \right)$ is
a $\Gamma_2$-contraction for every $\omega\in\mathbb T$.
 \end{thm}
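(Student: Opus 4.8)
The plan is to recognize this statement as the operator-theoretic shadow of Lemma \ref{sc:1} and to push the scalar inclusion it provides through the spectral-set machinery. I would write $\phi_\omega\colon\mathbb C^3\to\mathbb C^2$ for the ($\omega$-dependent) linear map $\phi_\omega(s_1,s_2,p)=\left(\tfrac{s_1}{3}+\omega\tfrac{s_2}{3},\,\omega p\right)$, so that the target pair is exactly $\phi_\omega(S_1,S_2,P)$. First I would observe that $\tfrac{S_1}{3}+\omega\tfrac{S_2}{3}$ and $\omega P$ commute, since $S_1,S_2,P$ do, so that $\phi_\omega(S_1,S_2,P)$ is a commuting pair and the notion of a $\Gamma_2$-contraction is meaningful for it.

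Next, because $\Gamma_2$ is polynomially convex, I would invoke Lemma \ref{poly-convex} to reduce the claim to the single von Neumann inequality
\[
\left\| q\!\left(\tfrac{S_1}{3}+\omega\tfrac{S_2}{3},\,\omega P\right)\right\|\le \|q\|_{\infty,\Gamma_2}
\]
for every holomorphic polynomial $q$ in two variables; this lone inequality simultaneously delivers the Taylor-spectrum containment $\sigma_T\subseteq\Gamma_2$ that is built into the definition of a spectral set. The bridge to $\Gamma_3$ is the composite polynomial $\tilde q:=q\circ\phi_\omega$. Since $\phi_\omega$ is linear and the polynomial functional calculus for a commuting tuple is an algebra homomorphism, one gets the substitution identity
\[
\tilde q(S_1,S_2,P)=q\!\left(\tfrac{S_1}{3}+\omega\tfrac{S_2}{3},\,\omega P\right).
\]
As $(S_1,S_2,P)$ is a $\Gamma_3$-contraction, $\Gamma_3$ is a spectral set for it, so directly from the definition $\|\tilde q(S_1,S_2,P)\|\le\|\tilde q\|_{\infty,\Gamma_3}$. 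Finally Lemma \ref{sc:1} furnishes $\phi_\omega(\Gamma_3)\subseteq\Gamma_2$, whence
\[
\|\tilde q\|_{\infty,\Gamma_3}=\sup_{x\in\Gamma_3}|q(\phi_\omega(x))|\le\sup_{y\in\Gamma_2}|q(y)|=\|q\|_{\infty,\Gamma_2}.
\]
Chaining the two displays yields the required inequality, and Lemma \ref{poly-convex} then upgrades it to the conclusion that $\Gamma_2$ is a spectral set for $\phi_\omega(S_1,S_2,P)$, i.e. that it is a $\Gamma_2$-contraction.

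There is little genuine obstacle here beyond bookkeeping, but two points demand care. The first is the substitution identity, which rests on the homomorphism property of the polynomial calculus together with $\phi_\omega$ being linear with constant (indeed $\omega$-scaled) coefficients. The second, which I expect to be the subtlest step, is the legitimacy of testing on holomorphic polynomials alone: one must be sure the polynomial inequality really certifies the full spectral-set condition, including the Taylor-spectrum containment. This is precisely the content of the converse direction of Lemma \ref{poly-convex}, and it is available only because both $\Gamma_2$ and $\Gamma_3$ are polynomially convex.
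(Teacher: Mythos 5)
Your proposal is correct and follows essentially the same route as the paper: both pass to the composite polynomial $f\circ\phi_\omega$, use the spectral-set property of $\Gamma_3$ for $(S_1,S_2,P)$ together with the scalar inclusion $\phi_\omega(\Gamma_3)\subseteq\Gamma_2$ from Lemma \ref{sc:1}, and then upgrade the resulting polynomial von Neumann inequality to the full $\Gamma_2$-contraction conclusion via Lemma \ref{poly-convex}. Your write-up is in fact slightly more careful than the paper's (which has a harmless typo, omitting the factors $\tfrac{1}{3}$ in its definition of $g_\omega$), but the argument is the same.
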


\begin{proof}

Let $g_{\omega}$ be the map from $\Gamma_3$ to $\Gamma_2$ that
maps $(s_1,s_2,p)$ to $(s_1+\omega s_2, \omega p)$. Let
$(S_{\omega},P_{\omega})=\left(\dfrac{S_1}{3}+\omega
\dfrac{S_2}{3}, \omega P \right)$. Then for any holomorphic
polynomial $f$ in two variables we have

\[
\| f(S_{\omega},P_{\omega}) \|=\| f\circ g_{\omega}(S_1,S_2,P) \|
\leq \| f\circ g_{\omega} \|_{\infty, \Gamma_3} = \| f\|_{\infty,
g_{\omega}(\Gamma_3)} \leq \| f\|_{\infty, \Gamma_2}.
\]
Thus by Lemma \ref{poly-convex}, $(S_{\omega},P_{\omega})$ is a
$\Gamma_2$-contraction.
\end{proof}

\noindent We now present the main result of this section.

\begin{thm}\label{connection}
Let
\[
\Lambda=\{ (s_1,s_2,p)\in\mathbb G_3
\,:\,(s_1,s_2)\in\sigma_T(F_1^*+F_2p,F_2^*+F_1p) \}
\]
be a distinguished variety in $\mathbb G_3$. Then $W=\phi
(\Lambda)$ is a distinguished variety in $\mathbb G_2$, where
$\phi$ is the following holomorphic map:
\begin{align*} & \phi\,:\,
\mathbb G_3 \longrightarrow \mathbb G_2 \\& (s_1,s_2,p) \mapsto
\left( \dfrac{s_1}{3}+\dfrac{s_2}{3},p \right).
\end{align*}

\end{thm}

\begin{proof}
We have that
\[
W=\{ \left( \dfrac{s_1+s_2}{3},p \right) \,:\,(s_1,s_2,p)\in\Lambda \}.
\]
Since $(s_1,s_2)\in\sigma_T(F_1^*+F_2p,F_2^*+F_1p)$,
$\dfrac{s_1+s_2}{3}$ is an eigenvalue of
$\dfrac{(F_1+F_2)^*}{3}+p\dfrac{(F_1+F_2)}{3}$. Therefore,
\begin{align*}
W &=\left\{ (\frac{s_1+s_2}{3},p)\in\mathbb G_2: \det
\left[\frac{(F_1+F_2)^*}{3}+p\frac{(F_1+F_2)}{3}-\frac{(s_1+s_2)}{3}I \right]  =0 \right\} \\ &=\{ (s,p)\in\mathbb G_2\,:\, \det
\left[\frac{(F_1+F_2)^*}{3}+p\frac{(F_1+F_2)}{3}-sI \right]=0 \}.
\end{align*}
Now for proving $W$ a distinguished variety in $\mathbb G_2$,
it suffices to show, by Theorem \ref{thm:DVsym}, that $\omega \left
(\dfrac{F_1+F_2}{3} \right)<1$. Since
$\sigma_T(F_1^*,F_2^*)\subseteq \mathbb G_2$, for
$(s,p)\in\sigma_T(F_1^*,F_2^*)$ there is a unit joint eigenvector
$v$ such that
$
F_1^*v=sv\,,\,F_2^*v=pv.
$
Taking inner product with $v$ we get
$
\bar s=\langle F_1v,v  \rangle \;,\; \bar p=\langle  F_2v,v
\rangle.
$
Since $(\bar s, \bar p)\in\mathbb G_2$, we have that $|\bar
s|+|\bar p|<3$. Therefore,
\begin{align*}
\omega(F_1+F_2) & =\sup_{\|v\|=1} \{ |\langle (F_1+F_2)v,v
\rangle| \} \\
& \leq \sup_{\|v\|=1}\{ |\langle F_1v,v \rangle|+|\langle F_2v,v
\rangle| \} \\
& <3.
\end{align*}
This implies that $\omega \left(\dfrac{F_1+F_2}{3} \right)<1$.
Therefore, $W$ is a distinguished variety in $\mathbb G_2$.

\end{proof}

\vspace{0.2cm}


\section{Model for pure $\Gamma_n$-isometries and rational dilation on the distinguished varieties}

\vspace{0.5cm}

\noindent Operator theory on any domain is always of independent interests. We recall from the literature a few important and useful results on operator theory of the symmetrized polydisc. These results will be frequently used throughout this section.

\begin{thm}[\cite{sourav6}, Theorem 3.3]\label{existence-uniqueness} Let
$(S_1,\dots,S_{n-1},P)$ be a $\Gamma_n$-contraction on a Hilbert
space $\mathcal H$. Then there are unique operators
$A_1,\dots,A_{n-1}\in\mathcal B(\mathcal D_P)$ such that
\[
S_i-S_{n-i}^*P=D_PA_iD_P \text{ for } i=1,\dots,n-1.
\]
Moreover, for each $i$ and for all $z\in \mathbb T$, $\omega
(A_i+A_{n-i}z)\leq \tilde n_i$. $[\tilde n_i= \binom{n}{i}]$.
\end{thm}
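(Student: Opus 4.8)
The plan is to dispatch uniqueness by a density argument and then to obtain existence together with the numerical-radius estimate by reducing, for each index $i$, to the already-understood case of the symmetrized bidisc. \emph{Uniqueness} is routine: if $D_P A_i D_P = D_P A_i' D_P$ with $A_i,A_i'\in\mathcal B(\mathcal D_P)$, then $\la A_i D_P h, D_P k\ra = \la A_i' D_P h, D_P k\ra$ for all $h,k\in\mathcal H$, and since $\{D_P h:h\in\mathcal H\}$ is dense in $\mathcal D_P$ we get $A_i=A_i'$; more generally the factorization $T=D_P X D_P$ determines $X\in\mathcal B(\mathcal D_P)$ uniquely, a fact used below.

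For existence and the bound, fix $i\in\{1,\dots,n-1\}$ and $z\in\mathbb T$ and set $\tilde n_i=\binom{n}{i}$ and
\[
(\tilde S,\tilde P):=\Big(\tfrac{1}{\tilde n_i}(S_i+zS_{n-i}),\,zP\Big).
\]
Since $|z|=1$ we have $\tilde P^*\tilde P=P^*P$, so $D_{\tilde P}=D_P$ and $\mathcal D_{\tilde P}=\mathcal D_P$. I claim $(\tilde S,\tilde P)$ is a $\Gamma_2$-contraction. Let $\Phi_z(s_1,\dots,s_{n-1},p)=(\tfrac{1}{\tilde n_i}(s_i+zs_{n-i}),zp)$. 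By Theorem~\ref{char-G} a point of $\Gamma_n$ satisfies $s_j=c_j+\bar c_{n-j}p$ with $(c_1,\dots,c_{n-1})\in\Gamma_{n-1}$ and $|p|\le 1$; writing $d=\bar c_{n-i}+z\bar c_i$ one finds $s_i+zs_{n-i}=z\bar d+dp$, so with $c:=z\bar d/\tilde n_i$ we get $\tfrac{1}{\tilde n_i}(s_i+zs_{n-i})=c+\bar c(zp)$ and $|zp|\le 1$. As $|c_j|\le\binom{n-1}{j}$ on $\Gamma_{n-1}$ and $\binom{n-1}{i-1}+\binom{n-1}{i}=\binom{n}{i}$, we obtain $|c|=|c_{n-i}+\bar z c_i|/\tilde n_i\le 1$, whence $\Phi_z(x)\in\Gamma_2$ by Theorem~\ref{char-G} with $n=2$. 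Thus $\Phi_z(\Gamma_n)\subseteq\Gamma_2$, and because $\Phi_z$ is a polynomial map, for every $f\in\mathcal R(\Gamma_2)$ we have $f\circ\Phi_z\in\mathcal R(\Gamma_n)$ with $\|f\circ\Phi_z\|_{\infty,\Gamma_n}\le\|f\|_{\infty,\Gamma_2}$ and $f(\tilde S,\tilde P)=(f\circ\Phi_z)(S_1,\dots,S_{n-1},P)$ by the homomorphism property of the rational functional calculus; together with the spectral-mapping property of the Taylor joint spectrum and the $\Gamma_n$-contractivity of $(S_1,\dots,S_{n-1},P)$ this yields $\|f(\tilde S,\tilde P)\|\le\|f\|_{\infty,\Gamma_2}$, proving the claim.

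Applying the known fundamental-operator theorem for the symmetrized bidisc to $(\tilde S,\tilde P)$ produces a unique $F_z\in\mathcal B(\mathcal D_P)$ with $\tilde S-\tilde S^*\tilde P=D_P F_z D_P$ and $\omega(F_z)\le 1$. Since $\tilde S-\tilde S^*\tilde P=\tfrac{1}{\tilde n_i}[(S_i-S_{n-i}^*P)+z(S_{n-i}-S_i^*P)]$, we get
\[
(S_i-S_{n-i}^*P)+z(S_{n-i}-S_i^*P)=D_P(\tilde n_i F_z)D_P,\qquad z\in\mathbb T.
\]
Evaluating at $z=1$ and $z=-1$ and taking half-sum and half-difference, the operators $A_i:=\tfrac{\tilde n_i}{2}(F_1+F_{-1})$ and $A_{n-i}:=\tfrac{\tilde n_i}{2}(F_1-F_{-1})$ satisfy $S_i-S_{n-i}^*P=D_P A_i D_P$ and $S_{n-i}-S_i^*P=D_P A_{n-i}D_P$; letting $i$ range over $\{1,\dots,n-1\}$ yields all of $A_1,\dots,A_{n-1}$. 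Finally, for each $z\in\mathbb T$ the displayed identity and uniqueness of the factorization force $A_i+zA_{n-i}=\tilde n_i F_z$, so $\omega(A_i+zA_{n-i})=\tilde n_i\,\omega(F_z)\le\tilde n_i$, which is the asserted estimate.

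The step I expect to be most delicate is the claim that $(\tfrac{1}{\tilde n_i}(S_i+zS_{n-i}),zP)$ is a genuine $\Gamma_2$-contraction, for this is precisely where the sharp constant $\binom{n}{i}$ is forced: the inclusion $\Phi_z(\Gamma_n)\subseteq\Gamma_2$ holds only because Pascal's identity makes $|c|\le 1$ hold with equality in the worst case, so no smaller normalization would work. Care is also needed that the transfer of the spectral-set inequality uses the full rational calculus and the Taylor spectral-mapping theorem rather than polynomials alone. A self-contained route avoiding the bidisc base case is to prove directly the operator inequality $\tilde n_i(I-P^*P)\ge\mathrm{Re}\big(\zeta(G_z-zG_z^*P)\big)$ for all $z,\zeta\in\mathbb T$, where $G_z=S_i+zS_{n-i}$, and then extract the factorization and the bound from a Douglas-type lemma: a polarization argument upgrades the diagonal estimate $|\la Th,h\ra|\le\tilde n_i\|D_P h\|^2$ to $|\la Th,k\ra|\le 2\tilde n_i\|D_P h\|\,\|D_P k\|$, giving $T=D_P X D_P$, after which the original inequality returns $\omega(X)\le\tilde n_i$. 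In that approach the obstacle migrates to proving the operator inequality directly from $\Gamma_n$-contractivity.
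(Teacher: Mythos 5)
Your proof is correct, but it cannot be matched against an internal argument because the paper never proves this theorem: it is quoted as a known result from the preprint \cite{sourav6}. What is worth noting is that your central reduction is precisely a generalization of machinery the paper does contain in Section \ref{sec:06}: your claim that $\Phi_z$ maps $\Gamma_n$ into $\Gamma_2$, and hence that $\bigl(\tfrac{1}{\binom{n}{i}}(S_i+zS_{n-i}),\,zP\bigr)$ is a $\Gamma_2$-contraction, is exactly Lemma \ref{sc:1} together with Theorem \ref{op:1} in the special case $(n,i)=(3,1)$ with constant $3=\binom{3}{1}$, and your two ingredients (Theorem \ref{char-G} for the geometry, transfer of the spectral-set inequality for the operators) are the same ones used there; indeed your verification via the full rational calculus and Taylor spectral mapping could be shortened by invoking Lemma \ref{poly-convex}, which is how Theorem \ref{op:1} argues. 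Granting the bidisc fundamental-operator theorem of \cite{tirtha-sourav} (existence, uniqueness, and $\omega(F_z)\le 1$), the remaining steps check out: the algebra $\tilde S-\tilde S^*\tilde P=\tfrac{1}{\binom{n}{i}}\bigl[(S_i-S_{n-i}^*P)+z(S_{n-i}-S_i^*P)\bigr]$ is right; the Pascal computation $|c_i|+|c_{n-i}|\le\binom{n-1}{i}+\binom{n-1}{i-1}=\binom{n}{i}$ is where the sharp constant enters, as you say; the half-sum/half-difference at $z=\pm1$ produces $A_i$ and $A_{n-i}$, with the consistency of the two competing definitions of each $A_j$ (once as index $j$, once as index $n-(n-j)$) secured by the uniqueness of solutions to $T=D_PXD_P$; and that same uniqueness forces $A_i+zA_{n-i}=\binom{n}{i}F_z$, converting $\omega(F_z)\le 1$ into the stated numerical-radius bound. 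The payoff of your route is that it makes the $\Gamma_n$ statement a formal corollary of the $\Gamma_2$ theory plus a purely geometric inclusion $\Phi_z(\Gamma_n)\subseteq\Gamma_2$, rather than requiring a fresh operator-inequality argument in $n$ variables (your sketched self-contained alternative); the cost is the dependence on \cite{tirtha-sourav}, which is entirely acceptable here since that result is standard and already cited by the paper.
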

This unique $(n-1)-$tuple $(A_1,\dots, A_{n-1})$ plays central role in all sections of operator theory on the symmetrized polydisc, e.g. \cite{tirtha-sourav, sourav3, sourav6} etc. and hence is called the fundamental operator tuple or shortly the $\ft$-{tuple} of the $\Gamma_n$-contraction $(S_1,\dots,S_{n-1},P)$. The following lemma from \cite{BSR} will be used in the proof of the main result of this section.
\begin{lem}[Lemma 2.7, \cite{BSR}] \label{lem:BSR2}
If $(S_1,\dots,S_{n-1},P)$ is a $\Gamma_n$-contraction then
\[
\left(\dfrac{n-1}{n}S_1,\dfrac{n-2}{n}S_2, \dots, \frac{1}{n}S_{n-1} \right)
\]
is a $\Gamma_{n-1}$-contraction.
\end{lem}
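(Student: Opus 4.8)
The plan is to verify directly the two clauses in the definition of a spectral set for the scaled tuple, namely that its Taylor joint spectrum sits inside $\Gamma_{n-1}$ and that the von Neumann inequality holds over $\mathcal R(\Gamma_{n-1})$. Both clauses are consequences of a single geometric fact about the linear map
\[
\rho:\mathbb C^n\to\mathbb C^{n-1},\qquad \rho(s_1,\dots,s_{n-1},p)=\Bigl(\tfrac{n-1}{n}s_1,\tfrac{n-2}{n}s_2,\dots,\tfrac1n s_{n-1}\Bigr),
\]
namely that $\rho(\Gamma_n)\subseteq\Gamma_{n-1}$. I regard establishing this inclusion as the heart of the matter; everything else is formal.

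To prove the inclusion I would argue via the Gauss--Lucas theorem. Let $x=(s_1,\dots,s_{n-1},p)\in\Gamma_n$, so that $s_i$ is the $i$-th elementary symmetric function of some $z_1,\dots,z_n$ with $|z_k|\le 1$ and $p=\prod_k z_k$. Form the monic polynomial $F(t)=\prod_{k=1}^n(t-z_k)=\sum_{i=0}^n(-1)^i s_i\,t^{n-i}$ with $s_0=1$ and $s_n=p$. Differentiating and dividing by $n$ gives
\[
\tfrac1n F'(t)=\sum_{i=0}^{n-1}(-1)^i\,\tfrac{n-i}{n}\,s_i\,t^{\,n-1-i},
\]
a monic polynomial of degree $n-1$ whose coefficients are precisely the coordinates of $\rho(x)$. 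By Gauss--Lucas the zeros $w_1,\dots,w_{n-1}$ of $F'$ lie in the convex hull of $z_1,\dots,z_n$, hence in $\overline{\mathbb D}$; comparing coefficients gives $\tfrac{n-i}{n}s_i=\sigma_i(w_1,\dots,w_{n-1})$, and therefore $\rho(x)\in\Gamma_{n-1}$. (One could instead try to read this off Theorem \ref{thm:DB}(3), but that statement only sees the distinguished boundary, whereas the derivative argument treats all of $\Gamma_n$ uniformly.)

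With the inclusion in hand, the spectrum clause follows from the spectral mapping theorem for the Taylor joint spectrum applied to the polynomial map $\rho$: since $\sigma_T(S_1,\dots,S_{n-1},P)\subseteq\Gamma_n$ by hypothesis,
\[
\sigma_T\Bigl(\tfrac{n-1}{n}S_1,\dots,\tfrac1n S_{n-1}\Bigr)=\rho\bigl(\sigma_T(S_1,\dots,S_{n-1},P)\bigr)\subseteq\rho(\Gamma_n)\subseteq\Gamma_{n-1}.
\]
For the inequality, take any $f=g_1/g_2\in\mathcal R(\Gamma_{n-1})$ and set $h:=f\circ\rho=(g_1\circ\rho)/(g_2\circ\rho)$. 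Because $g_2$ has no zeros on $\Gamma_{n-1}\supseteq\rho(\Gamma_n)$, the denominator $g_2\circ\rho$ has no zeros on $\Gamma_n$, so $h\in\mathcal R(\Gamma_n)$; and since $\rho$ is polynomial, $h(S_1,\dots,S_{n-1},P)=f\bigl(\tfrac{n-1}{n}S_1,\dots,\tfrac1n S_{n-1}\bigr)$, the required inverse existing by the spectrum computation just made. Invoking that $\Gamma_n$ is a spectral set for $(S_1,\dots,S_{n-1},P)$ and then the inclusion once more,
\[
\Bigl\|f\bigl(\tfrac{n-1}{n}S_1,\dots,\tfrac1n S_{n-1}\bigr)\Bigr\|=\|h(S_1,\dots,S_{n-1},P)\|\le\|h\|_{\infty,\Gamma_n}=\sup_{x\in\Gamma_n}|f(\rho(x))|\le\|f\|_{\infty,\Gamma_{n-1}}.
\]
Thus $\Gamma_{n-1}$ is a spectral set for the scaled tuple, completing the proof. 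The only nonformal ingredient is the Gauss--Lucas inclusion $\rho(\Gamma_n)\subseteq\Gamma_{n-1}$; I expect that to be the main obstacle, and the remaining arguments are routine applications of the spectral mapping theorem and the functoriality of the rational functional calculus.
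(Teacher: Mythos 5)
Your proposal is correct. Note that the paper itself gives no proof of this lemma: it is quoted verbatim from the cited source (Lemma 2.7 of \cite{BSR}), so there is no in-paper argument to compare against. Your argument is a sound, self-contained reconstruction, and it is essentially the standard one: the Gauss--Lucas step correctly identifies $\left(\frac{n-1}{n}s_1,\dots,\frac{1}{n}s_{n-1}\right)$ with the elementary symmetric functions of the roots of $\frac{1}{n}F'$, giving $\rho(\Gamma_n)\subseteq\Gamma_{n-1}$, and the two clauses of the spectral-set definition then follow by Taylor's spectral mapping theorem and composition of the rational functional calculus, both of which you apply correctly (including the existence of $\bigl(g_2\circ\rho\bigr)(S_1,\dots,S_{n-1},P)^{-1}$). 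One simplification worth noting: since $\Gamma_{n-1}$ is polynomially convex, the paper's own Lemma \ref{poly-convex} shows that checking the von Neumann inequality for holomorphic polynomials alone already yields both the spectrum containment and the full inequality over $\mathcal R(\Gamma_{n-1})$; with that observation your last two steps collapse to the one-line estimate $\|(f\circ\rho)(S_1,\dots,S_{n-1},P)\|\leq\|f\circ\rho\|_{\infty,\Gamma_n}\leq\|f\|_{\infty,\Gamma_{n-1}}$ for polynomials $f$, and you never need the Taylor spectral mapping theorem or the rational-function bookkeeping. The Gauss--Lucas inclusion remains, as you say, the only substantive ingredient either way.
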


Since the closed symmetrized polydisc $\Gamma_n$ and the closures of the distinguished varieties in $\gn$ are polynomially convex, by virtue of Lemma \ref{poly-convex}, a verification of von-Neumann's inequality on $\Gamma_n$ or any distinguished variety $\ov{\Lambda}$ suffices for a commuting operator tuple $(T_1, \dots, T_n)$ to have $\Gamma_n$ or $\ov{\Lambda}$ as a spectral set. In this Section, we shall show that $\ov{\Lambda}$ can be not only a spectral set for a $\Gamma_n$-contraction $(S_1, \dots ,S_{n-1},P)$ but also a complete spectral set for $(S_1, \dots ,S_{n-1},P)$ if $\Lambda$ is defined by  the $\ft$-tuple of $(S_1, \dots ,S_{n-1},P)$. Let us define spectral and complete spectral set below. 

\subsection{Spectral set and complete spectral set}

We shall follow Arveson's terminologies here. Let $X$ be a compact subset of $\mathbb
C^n$ and let $\mathcal R(X)$ denote the algebra of all rational
functions on $X$, that is, all quotients $p/q$ of polynomials
$p,q \in \C[z_1, \dots ,z_n]$ such that $q$ does not have any zero in $X$. The norm of an element
$f$ in $\mathcal R(X)$ is defined as
$$\|f\|_{\infty, X}=\sup \{|f(\xi)|\;:\; \xi \in X  \}. $$
Also for each $k\geq 1$, let $\mathcal R_k(X)$ denote the algebra
of all $k \times k$ matrices over $\mathcal R(X)$. Obviously each
element in $\mathcal R_k(X)$ is a $k\times k$ matrix of rational
functions $F=(f_{i,j})$ and we can define a norm on $\mathcal
R_k(X)$ in the canonical way
$$ \|F\|=\sup \{ \|F(\xi)\|\;:\; \xi\in X \}, $$ thereby making
$\mathcal R_k(X)$ into a non-commutative normed algebra. Let
$\underline{T}=(T_1,\cdots,T_n)$ be an $n$-tuple of commuting
operators on a Hilbert space $\mathcal H$. The set $X$ is said to
be a \textit{spectral set} for $\underline T$ if the Taylor joint
spectrum $\sigma_T (\underline T)$ of $\underline T$ is a subset
of $X$ and
\begin{equation}\label{defn1}
\|f(\underline T)\|\leq \|f\|_{\infty, X}\,, \textup{ for every }
f\in \mathcal R(X).
\end{equation}
Here $f(\underline T)$ can be interpreted as $p(\underline
T)q(\underline T)^{-1}$ when $f=p/q$. Moreover, $X$ is said to be
a \textit{complete spectral set} if $\|F(\underline T)\|\leq \|F\|$ for every $F$ in $\mathcal R_k(X)$, $k=1,2,\cdots$.

\subsection{Rational dilation}
A commuting $n$-tuple of operators $\underline T$ on a Hilbert
space $\mathcal H$, having $X$ as a spectral set, is said to have
a \textit{rational dilation} or \textit{normal}
$bX$-\textit{dilation} if there exists a Hilbert space $\mathcal
K$, an isometry $V:\mathcal H \rightarrow \mathcal K$ and an
$n$-tuple of commuting normal operators $\underline
N=(N_1,\cdots,N_n)$ on $\mathcal K$ with $\sigma_T(\underline
N)\subseteq bX$ such that

\begin{equation}\label{rational-dilation}
f(\underline T)=V^*f(\underline N)V, \textup{ for every } f\in
\mathcal R(X),
\end{equation}

or, in other words $f(\underline T)=P_{\mathcal H}f(\underline N)|_{\mathcal H}$ for every $f\in \mathcal R(X)$, when $\mathcal H$ is considered as a closed linear subspace of $\mathcal K$. Moreover, the dilation is called {\em minimal} if
\[
\mathcal K=\overline{\textup{span}}\{ f(\underline N) h\,:\;
h\in\mathcal H \textup{ and } f\in \mathcal R(K) \}.
\]

\begin{thm}[Arveson, \cite{Arveson-II}] \label{Arveson}
Let $X \subset \C^n$ be compact. A commuting $n$-tuple of Hilsert space operators $(T_1, \dots , T_n)$ admits a normal $\partial X$-dilation if and only if $X$ is a complete spectral set for $(T_1, \dots ,T_n)$.
\end{thm}

\begin{defn}
Let $(S_1,\dots,S_{n-1},P)$ be a $\Gamma_n$-contraction on
$\mathcal H$. A commuting $n-$tuple of operators $(Q_1,\dots,Q_{n-1},V)$ acting on
$\mathcal K$ is said to be a $\Gamma_n$-isometric dilation of
$(S_1,\dots,S_{n-1},P)$ if $\mathcal H \subseteq \mathcal K$,
$(Q_1,\dots,Q_{n-1},V)$ is a $\Gamma_n$-isometry and
\[
P_{\mathcal H}(Q_1^{m_1}\dots Q_{n-1}^{m_{n-1}}V^n)|_{\mathcal
H}=S_1^{m_1}\dots S_{n-1}^{m_{n-1}}P^n, \; m_1,\dots, m_{n-1},n
\in\mathbb N \cup \{0\}.
\]
Here $P_{\mathcal H}:\mathcal K \rightarrow \mathcal H$ is the
orthogonal projection of $\mathcal K$ onto $\mathcal H$. Moreover, since $\Gamma_n$ is polynomially convex, the dilation is minimal if
\[
\mathcal K=\overline{\textup{span}}\{ Q_1^{m_1}\dots,
Q_{n-1}^{m_{n-1}}V^n h\,:\; h\in\mathcal H \textup{ and
}m_1,\dots,m_{n-1},n\in \mathbb N \cup \{0\} \}.
\]
\end{defn}
In a similar fashion one can define a $\Gamma_n$-unitary dilation of a $\Gamma_n$-contraction. Note that if $(T_1, \dots , T_n)$ is a $\Gamma_n$-co-isometric extension of a $\Gamma_n$-contraction $(S_1, \dots ,S_{n-1},P)$, then naturally $(T_1^*, \dots ,T_n^*)$ is a $\Gamma_n$-isometric dilation of $(S_1^* , \dots ,S_{n-1}^*, P^*)$.

The following theorem provides a minimal $\Gamma_n$-isometric dilation to a
certain class of pure $\Gamma_n$-contractions.

\begin{thm}[\cite{sourav6}, Theorem 4.3] \label{dilation-theorem}
Let $(S_1,\dots , S_{n-1},P)$ be a pure $\Gamma_n$-contraction on
a Hilbert space $\mathcal{H}$ and let the fundamental operator
tuple $(F_{1*},\dots,F_{{n-1}*})$ of $(S_1^*,\dots,S_{n-1}^*,P^*)$
be such that $$\left(
\frac{n-1}{n}(F_{1*}^*+F_{{n-1}^*}z),\frac{n-2}{n}(F_{2*}^*+F_{{n-2}^*}z),
\dots, \frac{1}{n}(F_{{n-1}*}^*+F_{1*}z) \right)$$ is a
$\Gamma_{n-1}$-contraction for all $z \in \mathbb T$. Consider the
operators $T_1,\dots,T_{n-1},V$ on $\mathcal{K}=H^2
\otimes \mathcal{D}_{P^*}$ defined by
\[
T_i=I\otimes F_{i*}^*+M_z\otimes F_{{n-i}*}, \text{ for }
i=1,\dots, n-1 \text{ and } V=M_z\otimes I.
\]
Then $(T_1,\dots,T_{n-1},V)$ is a minimal pure
$\Gamma_n$-isometric dilation of $(S_1,\dots,S_{n-1},P)$. Moreover, $(T_1^*,\dots,T_{n-1}^*,V^*)$ is a $\Gamma_n$-co-isometric extension of $(S_1^*,\dots, S_{n-1}^*,P^*)$. 
\end{thm}
A proof to this result can also be found in author's unpublished article \cite{sourav15} (see Theorem 4.2 in \cite{sourav15}).

\subsection{A model theorem for pure $\Gamma_n$-isometry}

It is well-known that by Wold-decomposition an isometry is decomposed into two
orthogonal parts of which one is a unitary and the other is a pure isometry. See Chapter-I of
classic \cite{nagy} for details. A pure isometry $V$ is unitarily
equivalent to the Toeplitz operator $T_z$ on the vectorial Hardy
space $H^2(\mathcal D_{V^*})$. We have in Theorem \ref{G-isometry} an
analogous Wold-decomposition for $\Gamma_n$-isometries in terms of
a $\Gamma_n$-unitary and a pure $\Gamma_n$-isometry. Therefore, a
concrete model for pure $\Gamma_n$-isometries gives a complete
picture of a $\Gamma_n$-isometry. In \cite{BSR}, Biswas and Shyam
Roy produced an abstract model for pure $\Gamma_n$-isometries in terms
of Toeplitz operator tuple $(T_{\phi_1},\dots,
T_{\phi_{n-1}},T_z)$ on a vectorial Hardy-Hilbert space, (\cite{BSR}, Theorem 4.10). Here we make a refinement of their result by
specifying the Hardy-Hilbert space and the corresponding Toeplitz
operators precisely.

\begin{thm}\label{model1}
Let $(\hat{S_1},\dots,\hat{S}_{n-1},\hat{P})$ be a commuting
triple of operators on a Hilbert space $\mathcal H$. If
$(\hat{S_1},\dots,\hat{S}_{n-1},\hat{P})$ is a pure
$\Gamma_n$-isometry then there is a unitary operator $U:\mathcal H
\rightarrow H^2(\mathcal D_{{\hat{P}}^*})$ such that
\[
\hat{S}_i=U^*T_{\varphi_i}U, \text{ for } i=1,\dots,n-1\,, \text{
and }  \hat{P}=U^*T_zU\,.
\]
Here each $T_{\varphi_i}$ is the Toeplitz operator on the
vectorial Hardy space $H^2(\mathcal D_{{\hat{P}}^*})$ with the
symbol $\varphi_i(z)= F_i^*+F_{n-i}z$, where $(F_1,\dots,F_{n-1})$
is the fundamental operator tuple of
$(\hat{S}_1^*,\dots,\hat{S}_{n-1}^*,\hat{P}^*)$ such that
$$\left(
\frac{n-1}{n}(F_1^*+F_{n-1}z),\frac{n-2}{n}(F_2^*+F_{n-2}z),\dots,
\frac{1}{n}(F_{n-1}^*+F_1z) \right)$$ is a
$\Gamma_{n-1}$-contraction for every $z\in \mathbb T$.\\

Conversely, if $F_1,\dots,F_{n-1}$ are bounded operators on a
Hilbert space $E$ such that $$\left(
\frac{n-1}{n}(F_1^*+F_{n-1}z),\frac{n-2}{n}(F_2^*+F_{n-2}z),\dots,
\dfrac{1}{n}(F_{n-1}^*+F_1z) \right)$$ is a
$\Gamma_{n-1}$-contraction for every $z\in \mathbb D$, then
$(T_{F_1^*+F_{n-1}z},\dots,T_{F_{n-1}^*+F_1z},T_z)$ on $H^2(E)$ is
a pure $\Gamma_n$-isometry.
\end{thm}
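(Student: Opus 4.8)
The plan is to prove the two implications separately, using the classical Wold model of a pure isometry as the backbone and the fundamental-operator identities to pin the Toeplitz symbols down exactly. For the forward implication, I would first record that a pure $\Gamma_n$-isometry $(\hat S_1,\dots,\hat S_{n-1},\hat P)$ has $\hat P$ a pure isometry, which is immediate from Theorem \ref{G-isometry} together with the definition of purity. Hence the canonical Wold unitary $U\colon \mathcal H\to H^2(\mathcal D_{\hat P^*})$, $Uh=\sum_{k\ge 0}(D_{\hat P^*}\hat P^{*k}h)z^k$, satisfies $U\hat PU^*=T_z$ and carries $D_{\hat P^*}$ to the orthogonal projection $P_E$ onto the constant functions $E=\mathcal D_{\hat P^*}\subset H^2(\mathcal D_{\hat P^*})$. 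Since each $\hat S_i$ commutes with $\hat P$, the operator $U\hat S_iU^*$ commutes with $T_z$ and is therefore an analytic Toeplitz operator $T_{\varphi_i}$ with $\varphi_i\in H^\infty(\mathcal B(\mathcal D_{\hat P^*}))$; it then remains to identify $\varphi_i$.

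The identification is the heart of the argument. Using the defining relation of the $\ft$-tuple of the adjoint, $\hat S_i^*-\hat S_{n-i}\hat P^*=D_{\hat P^*}F_iD_{\hat P^*}$, together with the $\Gamma_n$-isometry relation $\hat S_i^*=\hat P^*\hat S_{n-i}$ coming from Theorem \ref{G-isometry}, I would rewrite the left-hand side as the commutator $[\hat P^*,\hat S_{n-i}]$ and transport the identity through $U$ to get $[T_z^*,T_{\varphi_{n-i}}]=P_EF_iP_E$. A short Fourier-coefficient computation shows that $[T_z^*,T_{\Phi}]$ sends $f$ to $z^{-1}(\Phi(z)-\Phi(0))f(0)$; comparing with the right-hand side, which depends only on $f(0)$ and is constant in $z$, forces $\varphi_{n-i}(z)=\varphi_{n-i}(0)+F_iz$, so the symbol is affine with linear coefficient $F_i$. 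Relabelling gives $\varphi_i(z)=\varphi_i(0)+F_{n-i}z$, and feeding this into $T_{\varphi_i}=T_{\varphi_{n-i}}^*T_z$ (the image of $\hat S_i=\hat S_{n-i}^*\hat P$) and matching coefficients yields $\varphi_i(0)=F_i^*$, hence $\varphi_i(z)=F_i^*+F_{n-i}z$. Finally, the $\Gamma_{n-1}$-contractivity of $(\tfrac{n-1}{n}(F_1^*+F_{n-1}z),\dots,\tfrac1n(F_{n-1}^*+F_1z))$ on $\mathbb T$ I would obtain by applying Theorem \ref{G-isometry} to conclude $(\tfrac{n-1}{n}\hat S_1,\dots,\tfrac1n\hat S_{n-1})$ is a $\Gamma_{n-1}$-contraction, transporting through $U$, and invoking Lemma \ref{lem:charming} (for $n-1$) to pass from the Toeplitz tuple to the symbol tuple, the pointwise statement on $\mathbb T$ following by continuity of the affine symbols.

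For the converse, set $\varphi_i(z)=F_i^*+F_{n-i}z$. Since $T_z$ on $H^2(E)$ is a pure isometry, by the characterization in Theorem \ref{G-isometry}(2) it suffices to check that the tuple commutes, that $T_{\varphi_i}=T_{\varphi_{n-i}}^*T_z$, and that $(\tfrac{n-1}{n}T_{\varphi_1},\dots,\tfrac1n T_{\varphi_{n-1}})$ is a $\Gamma_{n-1}$-contraction. Commutativity with $T_z$ is automatic for analytic Toeplitz operators, while mutual commutativity of the $T_{\varphi_i}$ reduces to $\varphi_i(z)\varphi_j(z)=\varphi_j(z)\varphi_i(z)$, which is precisely the commutativity built into the hypothesis that $(\tfrac{n-1}{n}\varphi_1(z),\dots,\tfrac1n\varphi_{n-1}(z))$ is a commuting $\Gamma_{n-1}$-contraction for each $z$. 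The identity $T_{\varphi_i}=T_{\varphi_{n-i}}^*T_z$ is the same coefficient computation as above run in reverse. Lastly, the pointwise hypothesis gives $\|p(\tfrac{n-1}{n}\varphi_1(z),\dots)\|\le\|p\|_{\infty,\Gamma_{n-1}}$ for every $z\in\mathbb D$ and every polynomial $p$, whence by the maximum principle $(\tfrac{n-1}{n}\varphi_1,\dots)$ is a $\Gamma_{n-1}$-contraction as a tuple of $H^\infty$ symbols, and Lemma \ref{lem:charming} (with $n-1$) upgrades this to the Toeplitz tuple; purity is immediate since $T_z^{*m}\to0$ strongly.

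I expect the main obstacle to be the symbol identification in the forward direction: correctly transporting the fundamental-operator equation and the isometry relation through the Wold unitary and extracting from $[T_z^*,T_{\varphi_{n-i}}]=P_EF_iP_E$ that each symbol is affine with exactly the prescribed coefficients $F_i^*$ and $F_{n-i}$. A secondary delicate point is the passage between pointwise $\Gamma_{n-1}$-contractivity (on $\mathbb T$ in the forward direction, on $\mathbb D$ in the converse) and the $H^\infty$-symbol statement, where Lemma \ref{lem:charming} must be combined with a continuity or maximum-principle argument.
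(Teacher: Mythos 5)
Your proposal is correct, and its overall skeleton coincides with the paper's proof: Wold model for the pure isometry $\hat P$, commutant of $T_z$ realized as analytic Toeplitz operators, the characterization in Theorem \ref{G-isometry}(2) for both directions, and Lemma \ref{lem:charming} to move between Toeplitz tuples and their symbols. The one genuine difference is the symbol identification in the forward direction. The paper never invokes the fundamental-operator equation inside the proof: it deduces the affine form $\phi_i(z)=G_i+G_{n-i}z$, $\phi_{n-i}(z)=G_{n-i}^*+G_i^*z$ purely from the algebraic relations $T_{\phi_i}=T_{\phi_{n-i}}^*T_z$ (a Fourier-coefficient argument), then simply \emph{defines} $F_i=G_i^*$, $F_{n-i}=G_{n-i}$; the fact that these coefficients are the $\ft$-tuple of the adjoint is only verified afterwards, in Remark \ref{partial-converse}, via uniqueness in Theorem \ref{existence-uniqueness}. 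You instead transport the fundamental equation $\hat S_i^*-\hat S_{n-i}\hat P^*=D_{\hat P^*}F_iD_{\hat P^*}$ through the Wold unitary, using $\hat S_i^*=\hat P^*\hat S_{n-i}$ to read the left side as a commutator, and extract $[T_z^*,T_{\varphi_{n-i}}]=P_EF_iP_E$, which forces affineness with linear coefficient \emph{exactly} $F_i$; the relation $T_{\varphi_i}=T_{\varphi_{n-i}}^*T_z$ then only pins down the constant term $F_i^*$. Your route is slightly longer but buys something the paper's proof leaves to the remark: it establishes within the proof that the symbol coefficients are the fundamental operators of the adjoint tuple, which is part of the statement as phrased. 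The paper's route is more economical, needing only the Toeplitz algebra relations. Both arguments for the converse coincide, though you spell out the commutativity check (hidden in the hypothesis that each pointwise tuple is a commuting $\Gamma_{n-1}$-contraction) that the paper passes over in silence.
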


\begin{proof}

Suppose that $(\hat{S}_1,\dots,\hat{S}_{n-1},\hat{P})$ is a pure
$\Gamma_n$-isometry. Then $\hat{P}$ is a pure isometry and it can
be identified with the Toeplitz operator $T_z$ on $H^2(\mathcal
D_{{\hat{P}}^*})$. Therefore, there is a unitary $U$ from
$\mathcal H$ onto $H^2(\mathcal D_{{\hat{P}}^*})$ such that
$\hat{P}=U^*T_zU$. Since for $\hat{S}_i$ is a commutant of
$\hat{P}$, there are multipliers $\phi_i$ in $H^{\infty}(\mathcal
B(\mathcal D_{{\hat{P}}^*}))$ such that $\hat{S}_i=U^*T_{\phi_i}U$
for $i=1,\dots, n-1$.

Since $(T_{\phi_1},\dots,T_{\phi_{n-1}},T_z)$ is a
$\Gamma_n$-isometry, by part-(2) of Theorem \ref{G-isometry}, we have
that
\[
T_{\phi_i}=T_{\phi_{n-i}}^*T_z \text{ for } i=1,\dots,n-1\,,
\]
and by these relations we have
\[
\phi_i(z)=G_i+G_{n-i}z \textup{ and }
\phi_{n-i}(z)=G_{n-i}^*+G_i^*z \textup{ for some } G_i,G_{n-i}
\in\mathcal B(\mathcal D_{{\hat{P}}^*}).
\]
Set ${F_i}= G_i^*$ and ${F_{n-i}}=G_{n-i}$ for each $i$. Again
since $(T_{\phi_1},\dots,T_{\phi_{n-1}},T_z)$ is a
$\Gamma_n$-isometry, by Lemma 2.7 in \cite{BSR}, $
(\frac{n-1}{n}T_{\phi_1},\dots,\frac{1}{n}T_{\phi_{n-1}}) $ is a
$\Gamma_{n-1}$-contraction and hence
$(\frac{n-1}{n}\phi_1,\dots,\frac{1}{n}\phi_{n-1})$, that is,
\[
\left(
\frac{n-1}{n}(F_1^*+F_{n-1}z),\frac{n-2}{n}(F_2^*+F_{n-2}z),\dots,
\frac{1}{n}(F_{n-1}^*+F_1z)
\right)
\]
is a $\Gamma_{n-1}$-contraction for all $z$ in $\mathbb T$. For the converse part, note that
\[
\left(
\frac{n-1}{n}(F_1^*+F_{n-1}z),\frac{n-2}{n}(F_2^*+F_{n-2}z),\dots,
\frac{1}{n}(F_{n-1}^*+F_1z)
\right)
\]
is a $\Gamma_{n-1}$-contraction. So, the fact that
$(T_{F_1^*+F_{n-1}z},\dots,T_{F_{n-1}^*+F_1z},T_z)$ on $H^2(E)$ is
a $\Gamma_n$-isometry follows from part-(2) of Theorem
\ref{G-isometry}. Needless to mention that $T_z$ on $H^2(E)$ is a
pure isometry which makes
$(T_{F_1^*+F_{n-1}z},\dots,T_{F_{n-1}^*+F_1z},T_z)$ a pure
$\Gamma_n$-isometry.

\end{proof}

\begin{rem}\label{partial-converse}
One can easily verify that the $(n-1)$-tuple $(F_1,\dots, F_{n-1})$ is the $\ft$-tuple
of the adjoint of
$(T_{F_1^*+F_{n-1}z},\dots,T_{F_{n-1}^*+F_1z},T_z)$ as was shown in \cite{sourav6} by the author. This also concludes a partial converse to the
Theorem \ref{existence-uniqueness} in the following sense: given
$n-1$ Hilbert space operators $F_1,\dots, F_{n-1}$ on $E$ such
that
\[
\left(
\frac{n-1}{n}(F_1^*+F_{n-1}z),\frac{n-2}{n}(F_2^*+F_{n-2}z),\dots,
\frac{1}{n}(F_{n-1}^*+F_1z) \right)
\]
is a $\Gamma_{n-1}$-contraction, there exists a
$\Gamma_n$-contraction (which is more precisely a
$\Gamma_n$-co-isometry)
$(T_{F_1^*+F_{n-1}z}^*,\dots,T_{F_{n-1}^*+F_1z}^*,T_z^*)$ on
$H^2(E)$ whose $\ft$-tuple is $(F_1,\dots, F_{n-1})$.

\end{rem}

\begin{lem}\label{lem:normal-op1}
Let $X \subset \C^n$ be polynomially convex and let $(T_1, \dots ,T_n)$ be a commuting tuple of normal Hilbert space operators. Then $X$ is a spectral set for $(T_1, \dots ,T_n)$ if and only if $\sigma_T (T_1, \dots ,T_n) \subseteq X$.
\end{lem}

\begin{proof}
If $X$ is a spectral set for $(T_1, \dots ,T_n)$, then by definition $\sigma_T (T_1, \dots ,T_n) \subseteq X$. So, we prove the other direction. Let $\sigma_T (T_1, \dots ,T_n) \subseteq X$. Since $X$ is polynomially convex, it suffices to prove von Neumann's inequality for polynomials only. Let $f \in \C [z_1, \dots , z_n]$. Then $f(T_1, \dots ,T_n)$ is a normal operator as $(T_1, \dots ,T_n)$ is a commuting normal tuple. Therefore, $\| f(T_1, \dots ,T_n) \| = r \left( f(T_1, \dots ,T_n) \right) $ and by spectral mapping theorem $$\sigma(f(T_1, \dots ,T_n))= \{ f(t_1, \dots, t_n)\,: \, (t_1, \dots, t_n) \in \sigma_T(T_1, \dots ,T_n) \}.$$ Since $\sigma_T(T_1, \dots ,T_n)\subseteq X$, it follows that
\[
\| f(T_1, \dots ,T_n) \|=r (f(T_1, \dots ,T_n)) \leq \sup_{z\in X}\, |f(z)| =\|f\|_{\infty, X}.
\]
This completes the proof.
\end{proof}

\vspace{0.4cm}

Now we are in a position to present the desired dilation theorem. This is another main result of this paper.

\subsection{A matricial von-Neumann inequality and rational dilation on distinguished varieties} \label{sec:07}

\begin{thm}\label{thm:VN}
Let $\Sigma=(S_1,\dots,S_{n-1},P)$ be a $\Gamma_n$-contraction with $\ft$-tuple $(F_1,\dots,F_{n-1})$ such that $P^*$ is pure and $\dim \mathcal D_{P} < \infty $. If $(F_1,\dots,F_{n-1})$ defines a distinguished variety $\Lambda_{\Sigma}$ in $\gn$, then both $(S_1, \dots , S_{n-1},P)$ and $(S_1^*,\dots,S_{n-1}^*,P^*)$ admit normal $\partial \ov{\Lambda}_{\Sigma}-$dilation, where $\partial \ov{\Lambda}_{\Sigma}= \ov{\Lambda}_{\Sigma} \setminus \Lambda_{\Sigma}\;(=b\Gamma_n \cap \ov{\Lambda}_{\Sigma}) $. Moreover, the dilation of $(S_1^*, \dots ,S_{n-1}^*,P^*)$ is minimal and acts on the minimal unitary dilation space of $P^*$.
\end{thm}

\begin{proof}

By hypothesis $\mathcal D_P$ is finite
dimensional. Suppose $\dim \mathcal D_{P}= k $. Then $\mathcal D_P \equiv \C^k$ and $F_1,\dots,F_{n-1}$ are all complex matrices of order $k$. Since $(F_1, \dots ,F_{n-1})$ defines a distinguished variety in $\gn$, it follows from Theorem \ref{thm:DVchar} that the polynomials $\{f_i=\det (F_i^*+pF_{n-i}-s_iI):i=1, \dots, n-1 \}$ form a regular sequence and that
\begin{align*}
\quad \quad \Lambda_{\Sigma} = V_S \cap \gn & =  \{ (s_1,\dots,s_{n-1},p)\in \mathbb G_n \,:
\nonumber
\\& \quad \; (s_1,\dots,s_{n-1}) \in \sigma_T(F_1^*+pF_{n-1}\,,\,
F_2^*+pF_{n-2}\,,\,\dots\,, F_{n-1}^*+pF_1) \}
\end{align*}
is a distinguished variety in $\gn$, where $V_S$ is the complex algebraic variety generated by the polynomials $\{ f_1, \dots , f_{n-1} \}$.

It is well-known from the Sz. Nagy-Foias model theory that unto a unitary the multiplication operator $M_z$ on $L^2(\mathcal{D_P})$ is the minimal unitary dilation of the pure contraction $P^*$. We now show that $(S_1^*, \dots ,S_{n-1}^*,P^*)$ dilates on $\ov{\Lambda}_{\Sigma}$ to the $\Gamma_n$-unitary
$
\left( M_{F_1^*+F_{n-1}z}, \dots ,M_{F_{n-1}+F_1z}, \, M_z   \right)
$
acting on $L^2(\mathcal D_P)=L^2(\C^k)$. Needless to mention that this dilation must be a minimal one. It follows from the condition-(1) of Theorem \ref{thm:DVchar} that $(F_1^*+zF_{n-1}, F_2^*+zF_{n-2},\dots,F_{n-1}^*+zF_1, zI )$ is a commuting normal tuple for each $z\in \T$. Since $(F_1, \dots ,F_{n-1})$ defines a distinguished variety in $\gn$, then it follows from the proof of Theorem \ref{thm:DVchar} (see the paragraph after equation (\ref{eqn:essn})) that the operator tuple $\left( T_{F_1^*+F_{n-1}z}^*, \dots, T_{F_{n-1}^*+F_{1}z}^* , T_z^* \right)$ acting on $H^2(\C^k)$ is a $\Gamma_n$-co-isometry for every $z\in \T$. A simple calculation shows that $(F_1, \dots ,F_{n-1})$ is the $\ft$-tuple of $\left( T_{F_1^*+F_{n-1}z}^*, \dots, T_{F_{n-1}^*+F_{1}z}^* , T_z^* \right)$. It follows from Lemma \ref{lem:BSR2} that
\[
\left(
\frac{n-1}{n}(F_1^*+F_{n-1}z),\frac{n-2}{n}(F_2^*+F_{n-2}z),\dots, \frac{1}{n}(F_{n-1}^*+F_1z) \right)
\]
is a $\Gamma_{n-1}$-contraction for all $z\in \T$. Note that $\|F_i^*+zF_{n-i} \| \leq {n \choose i}$ by Theorem \ref{existence-uniqueness} as $(F_1, \dots ,F_{n-1})$ is the $\ft$-tuple. Then, it follows from Theorem \ref{G-unitary} that the multiplication operator tuple
$
\left( M_{F_1^*+F_{n-1}z}, \dots ,M_{F_{n-1}+F_1z}, M_z \right)$
acting on $L^2(\C^k)$ is a $\Gamma_n$-unitary as it also satisfies $M_{F_i^*+zF_{n-i}}=M_{F_{n-i}^*+zF_i}^*M_z$, and $M_z$ is a unitary. Therefore, for each $z\in \D$ the Taylor joint spectrum $\sigma_T(F_1^*+zF_{n-1}, \dots, F_{n-1}^*+zF_1, zI) \subseteq b\Gamma_n$. Again since $F_1, \dots, F_{n-1}$ defines a distinguished variety in $\gn$, it follows from Theorem \ref{thm:DVchar} that
\[
\sigma_T(F_1^*+zF_{n-1}, \dots, F_{n-1}^*+zF_1, zI) \subseteq b\Gamma_n \cap V_S =\partial \ov{\Lambda}_{\Sigma}.
\]
Therefore, for every $z\in \T$, $(F_1^*+zF_{n-1}, \dots, F_{n-1}^*+zF_1, zI)$ is a commuting normal tuple whose Taylor joint spectrum lies on $\partial \ov{\Lambda}_{\Sigma}\subseteq b\Gamma_n$. Thus, $(F_1^*+zF_{n-1}, \dots, F_{n-1}^*+zF_1, zI)$ and hence $\left( M_{F_1^*+F_{n-1}z}, \dots ,M_{F_{n-1}+F_1z}, M_z \right)
$ is a $\Gamma_n$-unitary and by Lemma \ref{lem:normal-op1}, $\partial \ov{\Lambda}_{\Sigma}$ is a spectral set for it. Again, since for every $z\in \T$,
\[
\left(
\frac{n-1}{n}(F_1^*+F_{n-1}z),\frac{n-2}{n}(F_2^*+F_{n-2}z),\dots, \frac{1}{n}(F_{n-1}^*+F_1z) \right)
\]
is a $\Gamma_{n-1}$-contraction, we apply Theorem \ref{dilation-theorem} to the pure $\Gamma_n$-contraction
$(S_1^*,\dots,S_{n-1}^*,P^*)$ to get a $\Gamma_n$-co-isometric extension $(T_{\varphi_1}^*,\dots
,T_{\varphi_{n-1}}^*,T_z^*)$ on $H^2(\mathcal D_{P})$ of $(S_1,\dots,S_{n-1},P)$, where $\varphi_i(z)=F_i^*+F_{n-i}z$ for $i=1,\dots,n-1$.
Therefore, we have that
\[
T_{F_1^*+F_{n-1}z}^*|_{\mathcal H}=S_1, \dots
,T_{F_{n-1}^*+F_1z}^*|_{\mathcal H}=S_{n-1}, \text{ and }
T_z^*|_{\mathcal H}=P.
\]
Therefore, $(T_{\varphi_1}, \dots , T_{\varphi_{n-1}}, T_z)$ is a $\Gamma_n$-isometric dilation of $(S_1^*, \dots ,S_{n-1}^*, P^*)$. Also, the restriction of the $\Gamma_n$-unitary $(M_{\varphi_1},\dots,M_{\varphi_{n-1}},M_z)$ to
the joint-invariant subspace $H^2(\mathcal D_P)$ is the $\Gamma_n$-isometry $(T_{\varphi_1},\dots,
T_{\varphi_{n-1}},T_z)$. Thus, $(M_{\varphi_1},\dots,M_{\varphi_{n-1}},M_z)$ on
$L^2(\mathcal D_P)$ is a $\Gamma_n$-unitary dilation of $(S_1^*, \dots , S_{n-1}^*,P^*)$, where $L^2(\mathcal D_P)$ is the Sz. Nagy-Foias minimal unitary dilation space for the pure contraction $P^*$. Also, $\partial \ov{\Lambda}_{\Sigma}$ is a spectral set $(M_{\varphi_1},\dots,M_{\varphi_{n-1}},M_z)$. So, we are done with the first part of the proof.\\

We now show that $(S_1, \dots ,S_{n-1},P)$ also dilates to a $\Gamma_n$-unitary having $\partial \ov{\Lambda}_{\Sigma}$ as a spectral set. By Theorem \ref{Arveson}, it suffices if we prove that $\ov{\Lambda}_{\Sigma}$ is a complete spectral set for $(S_1, \dots ,S_{n-1},P)$. Let $f$ be a matrix-valued holomorphic polynomial in
$n$-variables, where the coefficient matrices are of order $d$ say, and
let $f_*$ be the polynomial satisfying
$f_*(A_1,\dots,A_n)=f(A_1^*,\dots,A_{n}^*)^*$ for any $n$
commuting operators $A_1,\dots,A_n$. Then
\begingroup
\allowdisplaybreaks
\begin{align*}
\|f(S_1,\dots,S_{n-1},P)\| &\leq \|f(T_{\varphi_1}^*,\dots,T_{\varphi_{n-1}}^*, T_z^*)\|_{H^2( \mathcal D_{P})\otimes \mathbb C^d} \\
&= \|f_*(T_{\varphi_1},\dots,T_{\varphi_{n-1}}, T_z)^*\|_{H^2( \mathcal D_{P})\otimes \mathbb C^d} \\
& = \|f_*(T_{\varphi_1},\dots,T_{\varphi_{n-1}},
T_z)\|_{H^2(\mathcal D_{P})\otimes \mathbb C^d} \\&
\leq \|f_*(M_{\varphi_1},\dots,M_{\varphi_{n-1}},M_z)\|_{L^2( \mathcal D_{P})\otimes \mathbb C^d} \\
& = \max_{\theta \in [0,2\pi]} \|f_*
(\varphi_1(e^{i\theta}),\dots,\varphi_{n-1}(e^{i\theta}),
e^{i\theta}I)\|.
\end{align*}
\endgroup
Since $M_{\varphi_1},\dots,
M_{\varphi_{n-1}},M_z $ are commuting normal operators,
so are $\varphi_1(z), \dots, \varphi_{n-1}(z)$ for every $z\in\mathbb T$. Therefore,
\begingroup
\allowdisplaybreaks
\begin{align*}
\|f_*(\varphi_1(e^{i\theta}),\dots,\varphi_{n-1}(e^{i\theta}),
e^{i\theta}I)\| =  \max_{\theta} & \{
|f_*(\lambda_1,\dots,\lambda_{n-1},e^{i\theta})|: \\&
 (\lambda_1,\dots,\lambda_{n-1}, e^{i\theta})\in
\sigma_T(\varphi(e^{i\theta}),\dots,\varphi_{n-1}(e^{i\theta}), e^{i\theta}I) \}.
\end{align*}
\endgroup
Let us define
\begingroup
\allowdisplaybreaks
\begin{align*}
\Lambda_{\Sigma}^*= &\{ (s_1,\dots,s_{n-1},p)\in \mathbb G_n \,:
\nonumber
\\& \quad (s_1,\dots,s_{n-1}) \in \sigma_T(F_1+pF_{n-1}^*,F_2+pF_{n-2}^*,\dots,F_{n-1}+pF_1^*)
\}.
\end{align*}
\endgroup
Then $F_1^*,\dots, F_{n-1}^*$ also satisfy the conditions of Theorem \ref{thm:DVchar} with $\Lambda_{\Sigma}$ being replaced by $\Lambda_{\Sigma}^*$. So, we can conclude that $\Lambda_\Sigma^*$ is also a
distinguished variety in $\mathbb G_n$. Also, since $(M_{\varphi_1},\dots,M_{\varphi_{n-1}},M_z)$ is a $\Gamma$-unitary, each $(\lambda_1,\dots,\lambda_{n-1},e^{i\theta})$ in $\sigma_T(\varphi_1(e^{i\theta}),\dots,\varphi_{n-1}(e^{i\theta}),e^{i\theta}I)$ belongs to $b\Gamma_n$. Therefore, we have that
\begingroup
\allowdisplaybreaks
\begin{align*}
\| f(S_1,\dots, S_{n-1},P) \| &\leq \max_{(s_1,\dots,
s_{n-1},p)\in \overline{\Lambda}_{\Sigma^*}
\cap b\Gamma_n} \| f_*(s_1,\dots,s_{n-1},p) \| \\
& =\max_{(s_1,\dots,s_{n-1},p)\in \overline{ \Lambda}_{\Sigma^*}
\cap b\Gamma_n} \|
\overline{f(\bar{s_1},\dots,\bar{s}_2,\bar{p})} \| \\
& = \max_{(s_1,\dots,s_{n-1},p)\in \overline{\Lambda}_{\Sigma^*}
\cap b\Gamma_n} \| f(\bar{s}_1,\dots,\bar{s}_2,\bar{p}) \|\\& =\max_{(s_1,\dots,s_{n-1},p)\in \overline{\Lambda}_{\Sigma} \cap b\Gamma_n} \|
f(s_1,\dots,s_{n-1},p) \| \\
&  \leq \max_{(s_1,\dots,s_{n-1},p)\in
\overline{\Lambda}_{\Sigma}} \| f(s_1,\dots,s_{n-1},p) \|.
\end{align*}
\endgroup

Hence $\ov{\Lambda}_{\Sigma}$ is a complete spectral set for $(S_1, \dots ,S_{n-1},P)$ and the proof is complete.

\end{proof}

\vspace{0.2cm}

\noindent \textbf{Concluding remarks.} Our curiosity extends to ask if the converse to Theorem \ref{thm:VN} is true, i.e. if a $\Gamma_n$-contraction $(S_1, \dots ,S_{n-1},P)$ possesses such a minimal normal $\partial\ov{\Lambda}_{\Sigma}-$dilation, then does it follow that $\Lambda_{\Sigma}$ is determined by the $\ft$-tuple of $(S_1, \dots ,S_{n-1}, P)$ ? There will be a sequel of this paper, where we shall show that the answer is affirmative. This will be achieved by exhibiting an explicit and unique minimal rational dilation on $\Gamma_n$ and numerous operator theoretic and complex analytic results will be proved to establish that.\\

\vspace{0.4cm}

\noindent \textbf{Conflict of interest.} There is no conflict of interest.

\section{Data availability statement}

\begin{enumerate}
\item Data sharing is not applicable to this article as no datasets were generated or analysed
during the current study.

\item In case any datasets are generated during and/or analysed during the current study, they
must be available from the corresponding author on reasonable request.
\end{enumerate}

\vspace{1cm}


\begin{thebibliography}{99}

\vspace{0.5cm}


\bibitem{A-K-M} J. Agler, G. Knese and J. E. M\raise.45ex\hbox{c}Carthy, \emph{Algebraic pairs of isometries}, {J. Operator Theory}, 67 (2012), 215 -- 236.\\

\bibitem{AM05} J. Agler and J. E. M\raise.45ex\hbox{c}Carthy,
\emph{Distinguished varieties}, {Acta Math.}, 194 (2005), no. 2, 133 -- 153.\\

\bibitem{AM06} J. Agler and J. E. M\raise.45ex\hbox{c}Carthy,
\emph{Parametrizing distinguished varieties}, {Contemp. Math.}, 393 (2006), 29 -- 34. \\

\bibitem{ay-ieot} J. Agler and N. J. Young, \emph{The two-point spectral Nevanlinna-Pick problem}, {Integral Equations Operator Theory}, 37 (2000), 375 -- 385. \\

\bibitem{ay-jot} J. Agler and N. J. Young, \emph{A model theory for $\Gamma$-contractions}, {J. Operator Theory} 49 (2003), 45 -- 60.\\

\bibitem{wermer} H. Alexander and J. Wermer,
\emph{Several complex variables and Banach algebras}, {Graduate Texts in Mathematics, 35; 3rd Edition}, Springer, (1997). \\

\bibitem{ando} T. Ando, \emph{On a pair of commutative contractions}, {Acta Sci Math} 24 (1963), 88 -- 90. \\

\bibitem{Arveson-II} W. Arveson, \emph{Subalgebras of $C^*$-algebras-II}, {Acta Math.}, 128 (1972), 271 -- 308.\\

\bibitem{nagy} H. Bercovici, C. Foias, L. Kerchy and B. Sz.-Nagy, \emph{Harmonic analysis of operators on Hilbert space}, Universitext, {Springer, New York}, 2010.
\\

\bibitem{Bh-Ku-Sau} T. Bhattacharyya, P. Kumar and H. Sau, \emph{Distinguished varieties through the Berger--Coburn--Lebow Theorem}, Anal. PDE, 15 (2022), 477 -- 506.\\

\bibitem{tirtha-sourav} T. Bhattacharyya, S. Pal and S. Shyam Roy, \emph{Dilations of $\Gamma$-contractions by solving operator equations}, {Adv. Math.} 230 (2012), 577 -- 606. \\

\bibitem{Bisai-Pal1} B. Bisai and S. Pal, \emph{Structure theorems for operators associated with two domains related to $\mu$-synthesis}, {Bull. Sci. Math.}, 159 (2020), 30pp, no. 102822. \\

\bibitem{Bisai-Pal2} B. Bisai and S. Pal, \emph{A certain rational function and operator theory on the symmetrized polydisc}, {J. Math. Anal. Appl.}, 495 (2021), Paper No. 124714, 15 pp.\\

\bibitem{BSR} S. Biswas and S. Shyam Roy, \emph{Functional
models for $\Gamma_n$-contractions and characterization of
$\Gamma_n$-isometries}, {J. Func. Anal.}, 266 (2014), 6224 -- 6255. \\

\bibitem{costara1} C. Costara, \emph{On the spectral Nevanlinna-Pick problem}, {Studia Math.}, 170 (2005), 23--55. \\

\bibitem{curto} R. E. Curto,  \emph{Applications of several complex variables to multiparameter spectral theory},
Surveys of Some Recent Results in Operator Theory, Vol. II, {Pitman Res. Notes Math. Ser., Longman Sci. Tech., Harlow}, 192 (1988), 25 -- 90. \\

\bibitem{Da-Ku-Sa} B. K. Das, P. Kumar and H. Sau, \emph{Distinguished varieties and the Nevanlinna-Pick problem on the symmetrized bidisc}, {https://arxiv.org/pdf/2104.12392.pdf}.\\

\bibitem{edi-zwo} A. Edigarian and W. Zwonek, \emph{Geometry of
symmetrized polydisc}, {Archiv der Mathematik} 84 (2005), 364 -- 374. \\

\bibitem{Eisenbud} D. Eisenbud and E. Graham Evans, Jr., \emph{Every algebraic set in $n-$space is the intersection of $n$ hypersurfaces}, {Inventiones Math.}, 19 (1973), 107 -- 112. \\

\bibitem{fischer} G. Fischer, \emph{Plane algebraic curves}, {Stud. Math. Libr.}, 15. {Amer. Math. Soc.}, Providence, RI, 2001. \\

\bibitem{Gamelin} T. Gamelin, \emph{Uniform Algebras}, {Prentice Hall, New Jersey}, 1969. \\

\bibitem{griffiths} P. Griffiths, \emph{Introduction to algebraic curves},
{Transl. Math. Monographs, 76. Amer. Math. Soc.}, 1989.\\

\bibitem{harris} P. Griffiths and J. Harris, \emph{Principles of Algebraic Geometry}, {Wiley, New York}, 1978. \\

\bibitem{Ro:H} R. Hartshorne, \emph{Algebraic Geometry}, {Graduate Texts in Mathematics; Springer-Verlag New York}, ISBN 0-387-90244-9, $1997$. \\

\bibitem{Kunj} E. Kunj, \emph{Introduction to commutative algebra and algebraic geometry}, {Birkh\"{a}user, Boston}, 1985. \\

\bibitem{sourav3} S. Pal, \emph{Canonical decomposition of operators associated with the symmetrized polydisc}, {Complex Anal. Oper. Theory}, 12 (2018), 931 -- 943. \\

\bibitem{sourav15} S. Pal, \emph{Distinguished varieties in two families of polydomains}, {https://arxiv.org/abs/2010.16077}.  \\

\bibitem{sourav14} S. Pal, \emph{The failure of rational dilation on the symmetrized $n$-disk for any $n\geq 3$},
{https://arxiv.org/abs/1712.05707}. \\

\bibitem{sourav6} S. Pal, \emph{Dilation, functional model and a complete unitary invariant for $C._{0}\,\; \Gamma_n$-contractions}, {Infin. Dimens. Anal. Quantum Probab. Relat. Top.}, To appear. \\

\bibitem{pal-shalit} S. Pal and O. M. Shalit, \emph{Spectral sets and distinguished varieties in the symmetrized bidisc}, {J. Funct. Anal.}, 266 (2014), 5779 -- 5800. \\

\bibitem{Taylor} J. L. Taylor, \emph{The analytic-functional calculus for several commuting operators}, {Acta Math.} 125 (1970), 1 -- 38. \\

\bibitem{Taylor1} J. L. Taylor, \emph{A joint spectrum for several commuting operators}, {J. Funct. Anal.}, 6 (1970), 172 -- 191. \\

\bibitem{vasilescu} F. H. Vasilescu, \emph{Analytic Functional
Calculus and Spectral Decompositions}, {Editura Academiei: Bucuresti, Romania and D. Reidel Publishing Company}, 1982. 



\end{thebibliography}
\end{document}